\documentclass[11pt,a4paper]{article}
\usepackage{amsmath,amsthm}
\usepackage{authblk}
\usepackage{aligned-overset}
\usepackage[style=numeric-comp,maxbibnames=100,bibencoding=utf8,giveninits=true,backend=bibtex]{biblatex}
\usepackage{cancel}
\usepackage{cases}
\usepackage{paralist}
\usepackage[hmargin={26mm,26mm},vmargin={30mm,35mm}]{geometry}
\usepackage{graphicx}
\usepackage{subcaption}
\usepackage{hyperref}
\usepackage[utf8]{inputenc}
\usepackage{newtxtext}
\usepackage[varvw]{newtxmath}
\usepackage{slashed}

\newcommand{\email}[1]{\href{mailto:#1}{#1}}

\theoremstyle{plain}
\newtheorem{thm}{Theorem}[section]

\newtheorem{lem}[thm]{Lemma}
\newtheorem{prop}[thm]{Proposition}
\newtheorem*{lem*}{Lemma} 
\theoremstyle{definition}

\newtheorem{algo}{Algorithm}

\theoremstyle{remark}
\newtheorem{rem}[thm]{Remark}

\newcommand*{\sumT}{\ensuremath{\sum_{T\in\mesh}}}
\newcommand*{\sumS}{\ensuremath{\sum_{S\in\submesh}}}
\newcommand*{\genericVec}{\ensuremath{\boldsymbol \tau}}
\newcommand*{\dx}{\ensuremath{\; \textup d x}}

\newcommand*{\tab}{\hspace*{0.5cm}}

\newcommand*{\IR}{\ensuremath{\mathbb R}}
\newcommand*{\IN}{\ensuremath{\mathbb N}}

\newcommand*{\IP}{\ensuremath{\mathbb P}}

\newcommand*{\mesh}{\ensuremath{\mathcal{T}_h}}
\newcommand*{\facesLoc}{\ensuremath{{\mathcal{F}_T}}}
\newcommand*{\faces}{\ensuremath{{\mathcal{F}_h}}}

\newcommand*{\facesUnion}{\ensuremath{{\Gamma_h}}}
\newcommand*{\normalT}{\ensuremath{\boldsymbol n_{\partial T}}}
\newcommand*{\submesh}{\ensuremath{\mathcal{S}_h}}
\newcommand*{\submeshT}{\ensuremath{\mathcal{S}_T}}
\newcommand*{\subfaces}{\ensuremath{\mathcal{G}_h}}
\newcommand*{\normalS}{\ensuremath{\boldsymbol n_{\partial S}}}

\newcommand*{\projLLoc}{\ensuremath{\Pi_T^{k}}}
\newcommand*{\projLLocM}{\ensuremath{\Pi_T^{\mDeg}}}
\newcommand*{\projLLocN}{\ensuremath{\Pi_T^{\nDeg}}}

\newcommand*{\projLLocVec}{\ensuremath{\boldsymbol \Pi_T^{k}}}
\newcommand*{\projLLocVecM}{\ensuremath{\boldsymbol \Pi_T^{\mDeg}}}

\newcommand*{\projLTraceLoc}{\ensuremath{\Pi_{\facesLoc}^{k}}}

\newcommand*{\projLTraceLocM}{\ensuremath{\Pi_{\facesLoc}^{\mDeg}}}
\newcommand*{\projLGlob}{\ensuremath{\Pi_{\mesh}^{k}}}
\newcommand*{\projLTraceGlob}{\ensuremath{\Pi_{\faces}^{k}}}
\newcommand*{\projLTraceGlobArbitraryDeg}{\ensuremath{\Pi_{\faces}}}
\newcommand*{\projLTraceLocArbitraryDeg}{\ensuremath{\Pi_{\facesLoc}}}
\newcommand*{\projEllipticLoc}{\ensuremath{\varpi_T^{k}}}
\newcommand*{\projEllipticLocPlus}{\ensuremath{\varpi_T^{k+1}}}
\newcommand*{\projLGlobM}{\ensuremath{\Pi_{\mesh}^{\mDeg}}}
\newcommand*{\projLTraceGlobN}{\ensuremath{\Pi_{\faces}^{\nDeg}}}
\newcommand*{\projLTraceGlobM}{\ensuremath{\Pi_{\faces}^{\mDeg}}}
\newcommand*{\projLTraceLocSArbitraryDeg}{\ensuremath{\Pi_{\mathcal F_{T_S}}}}

\newcommand*{\hhoSpaceLoc}{\ensuremath{\mathcal{U}^k_T \times \mathcal{M}^k_T}}
\newcommand*{\hhoSpaceGlob}{\ensuremath{\mathcal{U}^k_h\times \mathcal{M}^k_h}}
\newcommand*{\hhoSpaceTrace}{\ensuremath{\mathcal{M}}}
\newcommand*{\hhoSpaceCell}{\ensuremath{\mathcal{U}}}
\newcommand*{\trialGlob}{\ensuremath{(u_h, \mu_h)}}
\newcommand*{\testGlob}{\ensuremath{(v_h, \nu_h)}}
\newcommand*{\trialLoc}{\ensuremath{(u_T, \mu_T)}}
\newcommand*{\testLoc}{\ensuremath{(v_T, \nu_T)}}
\newcommand*{\trialLocCell}{\ensuremath{u_T}}
\newcommand*{\trialLocTrace}{\ensuremath{\mu_T}}
\newcommand*{\testLocCell}{\ensuremath{v_T}}
\newcommand*{\testLocTrace}{\ensuremath{\nu_T}}
\newcommand*{\testLocTraceS}{\ensuremath{\nu_{T_S}}}
\newcommand*{\trialGlobCell}{\ensuremath{u_h}}
\newcommand*{\trialGlobTrace}{\ensuremath{\mu_h}}
\newcommand*{\testGlobCell}{\ensuremath{v_h}}
\newcommand*{\testGlobTrace}{\ensuremath{\nu_h}}
\newcommand*{\trialRecLoc}{\ensuremath{(\recLoc^k\mu_T, \mu_T)}}
\newcommand*{\testRecLoc}{\ensuremath{(\recLoc^k\nu_T, \nu_T)}}
\newcommand*{\trialRecGlob}{\ensuremath{(\recGlob^k\mu_h, \mu_h)}}
\newcommand*{\testRecGlob}{\ensuremath{(\recGlob^k\nu_h, \nu_h)}}
\newcommand*{\errGlob}{\ensuremath{(e_h, \epsilon_h)}}
\newcommand*{\errLoc}{\ensuremath{(e_T, \epsilon_T)}}
\newcommand*{\errLocCell}{\ensuremath{e_T}}
\newcommand*{\errLocTrace}{\ensuremath{\epsilon_T}}
\newcommand*{\errGlobCell}{\ensuremath{e_h}}
\newcommand*{\errGlobTrace}{\ensuremath{\epsilon_h}}

\newcommand*{\hor}{\ensuremath{p_T^{k+1}}}
\newcommand*{\horNoDeg}{\ensuremath{p_T}}
\newcommand*{\gradHor}{\ensuremath{\boldsymbol G^k_T }}
\newcommand*{\gradHorNoDeg}{\ensuremath{\boldsymbol G_T }}
\newcommand*{\interpolLocNoArg}{\ensuremath{(\projLLoc, \projLTraceLoc)}}
\newcommand*{\interpolLoc}[1]{\ensuremath{(\projLLoc #1, \projLTraceLoc #1)}}
\newcommand*{\interpolLocM}[1]{\ensuremath{(\projLLocM #1, \projLTraceLocM #1)}}
\newcommand*{\interpolGlobNoArg}{\ensuremath{(\projLGlob, \projLTraceGlob)}}
\newcommand*{\interpolGlob}[1]{\ensuremath{(\projLGlob #1, \projLTraceGlob #1)}}
\newcommand*{\interpolGlobM}[1]{\ensuremath{(\projLGlobM #1, \projLTraceGlobM #1)}}
\newcommand*{\recLoc}{\ensuremath{U_T}}
\newcommand*{\recGlob}{\ensuremath{U_h}}
\newcommand*{\vLoc}{\ensuremath{V_T^{k}}}
\newcommand*{\vGlob}{\ensuremath{V_h^{k}}}
\newcommand*{\recLocS}{\ensuremath{U_{T_S}}}

\newcommand*{\aNonCondLoc}{\ensuremath{\underline{a}_T^k}}
\newcommand*{\aNonCondLocM}{\ensuremath{\underline{a}_T^{\mDeg}}}
\newcommand*{\aNonCondGlob}{\ensuremath{\underline{a}_h^k}}
\newcommand*{\aNonCondGlobM}{\ensuremath{\underline{a}_h^{\mDeg}}}
\newcommand*{\aNonCondLocArg}[2]{\ensuremath{\underline{a}_T^k\big(#1, #2\big)}}
\newcommand*{\aNonCondGlobArg}[2]{\ensuremath{\underline{a}_h^k\big(#1, #2\big)}}
\newcommand*{\aNonCondLocArgM}[2]{\ensuremath{\underline{a}_T^{\mDeg}\big(#1, #2\big)}}
\newcommand*{\aNonCondGlobArgM}[2]{\ensuremath{\underline{a}_h^{\mDeg}\big(#1, #2\big)}}
\newcommand*{\aLoc}{\ensuremath{a_T^k}}
\newcommand*{\aGlob}{\ensuremath{a_h^k}}
\newcommand*{\aLocArg}[2]{\ensuremath{a_T^k(#1, #2)}}
\newcommand*{\aGlobArg}[2]{\ensuremath{a_h^k(#1, #2)}}
\newcommand*{\aGlobArgM}[2]{\ensuremath{a_h^{\mDeg}(#1, #2)}}

\newcommand*{\sNonCondLoc}{\ensuremath{\underline{s}_T^k}}
\newcommand*{\sNonCondLocM}{\ensuremath{\underline{s}_T^{\mDeg}}}

\newcommand*{\sNonCondLocArg}[2]{\ensuremath{\underline{s}_T^k}\big(#1, #2\big)}
\newcommand*{\sNonCondLocArgM}[2]{\ensuremath{\underline{s}_T^{\mDeg}}\big(#1, #2\big)}

\newcommand*{\sNormNonCondLoc}[1]{\ensuremath{|#1|_{\sNonCondLoc}}}
\newcommand*{\sNormNonCondLocM}[1]{\ensuremath{|#1|_{\sNonCondLocM}}}
\newcommand*{\aNormNonCondLoc}[1]{\ensuremath{\|#1\|_{\aNonCondLoc}}}
\newcommand*{\aNormNonCondLocNoDeg}[2]{\ensuremath{\|#2\|_{\underline{a}_T^{#1}}}}
\newcommand*{\aNormNonCondLocM}[1]{\ensuremath{\|#1\|_{\aNonCondLocM}}}
\newcommand*{\aNormNonCondGlob}[1]{\ensuremath{\|#1\|_{\aNonCondGlob}}}
\newcommand*{\aNormNonCondGlobM}[1]{\ensuremath{\|#1\|_{\aNonCondGlobM}}}
\newcommand*{\aNormLoc}[1]{\ensuremath{\|#1\|_{\aLoc}}}
\newcommand*{\aNormGlob}[1]{\ensuremath{\|#1\|_{\aGlob}}}
\newcommand*{\aNormGlobNoDeg}[2]{\ensuremath{\|#2\|_{a_h^{#1}}}}
\newcommand*{\aNormGlobM}[1]{\ensuremath{\|#1\|_{a_h^{\mDeg}}}}

\newcommand*{\nSmooth}{\ensuremath{N_{\mathrm{smooth}}}}

\newcommand*{\nLevel}{\ensuremath{N_{\mathrm{level}}}}
\newcommand*{\polyDegMax}{\ensuremath{k}}
\newcommand*{\smoother}{\ensuremath{R^{k_j}}}

\newcommand*{\projMGN}{\ensuremath{P^{\nDeg}}}

\newcommand*{\projMGJMinus}{\ensuremath{P^{k_{j-1}}}}
\newcommand*{\mDeg}{k_j}
\newcommand*{\nDeg}{k_{j-1}}
\newcommand*{\lDeg}{l}
\newcommand*{\tildeTrialGlobTrace}{\ensuremath{\widetilde \mu_h}}
\newcommand*{\E}{\ensuremath{E_h}}
\newcommand*{\FMuLoc}[1]{\ensuremath{\boldsymbol{F}_S^{#1}(\mu_h)}}
\newcommand*{\FMuGlob}[1]{\ensuremath{\boldsymbol{F}_h^{#1}(\mu_h)}}

\newcommand*{\tildePNTrialGlobTrace}{\ensuremath{\widetilde{\projMGN\mu_h}}}

\newcommand*{\FPNMuGlob}[1]{\ensuremath{\boldsymbol{F}_h^{#1}(\projMGN\mu_h)}}

\newcommand*{\errFac}{\ensuremath{\Big(\frac{h}{k}\Big)}}
\newcommand*{\errFacL}{\ensuremath{\Big(\frac{h}{\lDeg}\Big)}}
\newcommand*{\errFacN}{\ensuremath{\Big(\frac{h}{\nDeg}\Big)}}
\newcommand*{\errFacM}{\ensuremath{\Big(\frac{h}{\mDeg}\Big)}}
\newcommand*{\errFacLoc}{\ensuremath{\Big(\frac{h_T}{k}\Big)}}

\newcommand*{\errFacMLoc}{\ensuremath{\Big(\frac{h_T}{\mDeg}\Big)}}
\newcommand*{\errFacInvLoc}{\ensuremath{\Big(\frac{k}{h_T}\Big)}}

\begin{document}

\title{Optimal $hp$-error estimates and $p$-multigrid convergence for Hybrid High-Order discretizations of the Poisson equation}
\author{Daniele A. Di Pietro}
\author{Emil H\"{o}ssjer}
\affil{%
  IMAG, Universit\'e de Montpellier, CNRS, Montpellier 34090, France\\
  \email{daniele.di-pietro@umontpellier.fr},
  \email{emil.hossjer@umontpellier.fr}
}

\maketitle

\begin{abstract}
  This paper presents two new theoretical results for Hybrid High-Order (HHO) methods applied to elliptic problems.
  First, we establish $hp$-error estimates for the HHO discretization of the Poisson problem that achieve optimal approximation rates with respect to both the mesh size $h$ and the polynomial degree $k$.
  These results improve upon previous analyses of hybrid methods, whose convergence estimates were suboptimal in $k$.
  Second, building on these estimates, we develop and analyze a non-inherited $p$-multigrid solver for the statically condensed HHO system.
  We prove results that improve upon the corresponding theory available for other non-conforming methods and constitute, to the best of our knowledge, the first rigorous convergence analysis of a $p$-multigrid algorithm for HHO discretizations.
  \smallskip\\
  \textbf{MSC2020:} 
  65N12, 
  65N55, 
  65N15, 
  65N30, 
  65N08  
  \smallskip\\
  \textbf{Key words:} Hybrid High-Order methods, %
  polytopal methods, %
  $hp$-convergence analysis, %
  $p$-multigrid algorithms
\end{abstract}



\section{Introduction}

In this paper, we prove two new results for Hybrid High-Order (HHO) discretizations of elliptic problems:
optimal error estimates in both the mesh size $h$ and the polynomial degree $k$, thereby improving the results of \cite{Aghili.Di-Pietro.ea:17},
and, building on these estimates, novel convergence results for $p$-multigrid iterations, yielding what appears to be the sharpest result currently available in the context of polytopal methods.
\smallskip

For the sake of simplicity, we focus on the following Poisson problem with homogeneous boundary conditions:
Given a convex polytope $\Omega\subset \IR^d$, $d\in \{2,3\}$, and a source term $f\in L^2(\Omega)$, find $u\in H_0^1(\Omega)$ such that
\begin{equation} \label{eq: continuous Poisson problem}
\left(\nabla u, \nabla v\right)_\Omega = (f, v)_\Omega \qquad \forall v\in H_0^{1}(\Omega),
\end{equation}
where $(\cdot,\cdot)_\Omega$ denotes the usual inner product of $L^2(\Omega)$.

The first main contribution of the paper is the proof of optimal $hp$-error estimates for the HHO approximation of problem \eqref{eq: continuous Poisson problem} in both the energy- and $L^2$-norms.
Here, ``optimal'' refers to the best polynomial approximation rates predicted by the $hp$-version of the Bramble--Hilbert lemma; see \cite{Babuska.Suri:87}.
Such optimal error estimates have previously been established, for example, for conforming finite elements \cite{Schwab:98} and discontinuous Galerkin (DG) methods \cite{Cangiani.Georgoulis.ea:14} on standard meshes.
Previous results for hybrid methods include \cite{Egger.Waluga:12}, where the Stokes problem was analyzed in the context of hybridizable DG methods,
and \cite{Aghili.Di-Pietro.ea:17, Dong.Ern:24}, which focused on HHO discretizations of elliptic problems. In all these cases, the resulting estimates were suboptimal with respect to the polynomial degree.

The estimate in \cite{Aghili.Di-Pietro.ea:17} is one order suboptimal in $p$ and the crucial element missing is a technical result which states that the $L^2$-orthogonal projector satisfies an optimal approximation property on traces. The paper \cite{Dong.Ern:24} makes this adjustment and they prove an estimate which is one half order suboptimal in $p$. They consider general boundary conditions however and previous $p$-analyses on DG methods \cite{Georgoulis.Hall.Melenk:10} have concluded that without additional regularity assumptions, $p$-error estimates which are a half order suboptimal in $p$ are in fact sharp. In the present paper we prove optimal $p$-error estimates for homogeneous Dirichlet boundary conditions. Our scheme is different from the one in \cite{Aghili.Di-Pietro.ea:17} in that we employ a full gradient reconstruction operator and we introduce a polynomial weight in the stabilization term. The use of a gradient reconstruction is motivated by the new trace estimate of the $L^2$-orthogonal projection operator. The paper \cite{Dong.Ern:24} also introduces a polynomial weight in the stabilization, however a different one than we do.

In the second part of the paper, building on the $hp$-error estimates discussed above, we analyze a $p$-multigrid algorithm for solving the HHO discretization of problem \eqref{eq: continuous Poisson problem}.
HHO methods follow a local design in which element unknowns are coupled only with the corresponding trace unknowns on their boundaries.
As a result, the former can be efficiently expressed locally in terms of the latter through a Schur complement construction, a process commonly referred to as \emph{static condensation}.
This yields a global \emph{condensed system} involving only face unknowns.
The assembly of the discrete problem as well as the static condensation procedure are both linear in the number of mesh elements and readily amenable to parallelization.
Consequently, for large-scale problems, the computational bottleneck lies in the solution of the condensed system.
A popular choice for this task is the use of multigrid methods.

Multigrid methods are iterative solvers that exploit discretization-specific information to achieve scalable, optimal-complexity performance \cite{Brenner.Scott:02}.
They consist of a \emph{smoother}, i.e., an iterative method that efficiently damps high-frequency error components,
and a \emph{correction step}, i.e., an approximate solution of the residual equation on a coarser level.
Common multigrid constructions employ $h$- and/or $p$-coarsening to define the coarse problems.
Standard frameworks for proving convergence are developed in \cite{Brenner.Scott:02,Brenner:04,Bramble.Pasciak.ea:91}.
In this paper, we rely on the framework of \cite{Bramble.Pasciak.ea:91}.

In the context of HHO methods, geometric $h$-multigrid strategies have been proposed and numerically investigated in \cite{Di-Pietro.Hulsemann.ea:21,Di-Pietro.Hulsemann.ea:21*1,Di-Pietro.Matalon.ea:23}; see also \cite{Cockburn.Dubois.ea:14,Roberts.Chan:16,Wildey.Muralikrishnan.ea:19} for related hybrid methods.
A crucial challenge in the design and analysis of $h$-multigrid methods for HHO discretizations stems from face coarsening, which typically generates coarse meshes with non-planar faces.
In this respect, $p$-multigrid provides a simpler and more natural alternative, in which coarsening is achieved by reducing the polynomial degree of both element and face unknowns.
In \cite{Botti.Di-Pietro:22}, a numerical study of the Stokes problem demonstrated robust convergence properties with respect to both $h$ and $p$; see also \cite{Franciolini.Fidkowski.ea:20} for an investigation in terms of CPU time and memory consumption.
Both works formulate the coarse problems by inheritance and use multigrid as a preconditioner for a Krylov subspace method.

From a theoretical standpoint, it was shown in \cite{Di-Pietro.Dong.ea:25} that a nested $h$-multigrid algorithm for HHO methods on conforming simplicial meshes converges uniformly with respect to $h$.
To the best of our knowledge, however, no theoretical analysis of the $p$-version has been carried out so far, and the present work aims precisely at filling this gap.
\smallskip

The analysis of $p$-multigrid algorithms for polytopal methods has previously been considered in \cite{Antonietti.Houston.ea:14,Antonietti.Mascotto.ea:17}.
Both contributions share several features with the present work: the target problem is \eqref{eq: continuous Poisson problem}; a non-inherited multigrid algorithm is considered, meaning that coarse-level problems are assembled independently rather than inherited from the fine level; and a Richardson iteration, or a smoother with sufficiently similar behavior, is employed.
The theoretical convergence result is also of the same form: Provided that the number of smoothing steps $\nSmooth$ is sufficiently large, then one multigrid iteration with the $W$-cycle contracts the error in some norm by a factor $\varrho(\nSmooth)$ which is independent of the mesh size $h$ but which has a polynomial deterioration in the polynomial degree $k$.
The paper \cite{Antonietti.Mascotto.ea:17}, in the context of $H^1$-conforming Virtual Element Methods, does not provide a precise expression for $\varrho(\nSmooth)$ and instead only a very coarse bound is given. The paper \cite{Antonietti.Houston.ea:14}, in the context of DG methods, however, serves as an interesting benchmark.
Their theorem and the result of the current paper are, respectively,
\begin{align}
  \label{eq: DG convergence result}
 \varrho^{\rm DG}(\nSmooth) &\le C^{\rm DG}\frac{k^2}{1 + \nSmooth},\\
  \label{eq: HHO convergence result}
  \varrho^{\rm HHO}(\nSmooth) &\le C^{\rm HHO}\frac{k^{\frac{7}{4}}}{1 + \nSmooth},
\end{align}
where the constants $C^{\rm DG}$ and $C^{\rm HHO}$ are independent of $h$ and $k$.

It is instructive to examine the origin of the $k$-dependencies in \eqref{eq: DG convergence result}--\eqref{eq: HHO convergence result}. They correspond to a product $\alpha_k\lambda_k$, where $\lambda_k$ denotes the largest eigenvalue of the system matrix and $\alpha_k$ quantifies how accurately the coarse problem approximates the fine one.
For DG methods, one has $\lambda_k^{\rm DG}\sim \frac{k^4}{h^2}$ and $\alpha_k^{\rm DG}\sim \frac{h^2}{k^2}$, which indeed yields a quadratic dependence in $k$.
For statically condensed HHO methods, one has $\lambda_k^{\rm HHO}\sim \frac{k^2}{h}$ and $\alpha_k^{\rm HHO}\sim \frac{h}{k^{\frac14}}$ (in passing, the latter estimate may potentially be improved).
Heuristically, the improved $k$-dependence in \eqref{eq: HHO convergence result} over \eqref{eq: DG convergence result} can be attributed to the presence of trace unknowns.
 Indeed, the $k$-dependence of $\lambda_k$ typically mirrors its $h$-dependence, and scaling arguments indicate that the latter is weaker for trace unknowns than for element unknowns.
Without static condensation, HHO methods would therefore not exhibit an improved $k$-dependence over DG methods in the multigrid convergence rate.
\smallskip

The remainder of the paper is organized as follows.
In Section \ref{section: setting}, we introduce the mesh together with the notation, assumptions, and inequalities used throughout the paper.
In Section \ref{section: hho schemes}, we define the HHO scheme, review static condensation, and state our main $hp$-error estimates.
In Section \ref{section: p-multigrid}, we introduce the multigrid algorithm, state the convergence theorem, and present numerical results.
Finally, Sections \ref{section: hp error estimates (analysis)} and \ref{section: multigrid analysis} are devoted to the proofs of the $hp$-error estimates and the multigrid convergence theorem, respectively.


\section{Setting}\label{section: setting}

\subsection{Mesh}\label{section: mesh}

For any measurable set $X\subseteq \IR^d$, we let $h_X$ denote its diameter.
We denote by $(\mesh, \faces)_h$ a polytopal regular mesh sequence in the sense of \cite[Definition 1.9]{Di-Pietro.Droniou:20}, with $\mesh$ collecting the mesh elements and $\faces$ the mesh faces.
In particular, there exists a shape regular sequence of simplicial submeshes $(\submesh, \subfaces)_h$ and a constant $c>0$ independent of $h$ such that:
$$
\text{For any $S\in\submesh$, there exists a unique $T\in\mesh$ such that $S\subseteq T$ and $h_T \le c h_S$.}
$$
The mesh size of $\mesh$ is $h = \max_{T\in\mesh} h_T$ and we shall assume quasi-uniformity, i.e., the existence of $C>0$ independent of $h$ such that
\begin{equation}
	\label{eq: mesh quasi uniformity assumption}
	h \le C h_T  \qquad \forall T\in\mesh.
\end{equation}
We furthermore set, for all $T\in\mesh$, $\facesLoc \coloneqq \{F \in \faces : F\subset \partial T\}$.
Finally, we define the mesh skeleton $\facesUnion \coloneqq \bigcup_{F \in\faces}\overline{F}$ and,
for each $T \in \mesh$, we denote by $\normalT$ the unit normal field on $\partial T$ pointing out of $T$.

In practice, to obtain the elliptic regularity condition of Section \ref{section: elliptic regularity} below, we must also assume that all the mesh elements $T\in\mesh$ are convex.

\subsection{Inner products and norms}

With $X$ as before, for any $q\ge 0$, we denote by $H^q(X)$ the usual Hilbert space of $q$ times weakly differentiable functions in $L^2(X) = H^0(X)$.
We respectively denote by $(\cdot,\cdot)_{q,X}$ and $\|\cdot\|_{q,X}$ the usual inner product and norm of $H^q(X)$. For $q=0$, we simply write $(\cdot,\cdot)_{X}$ and $\|\cdot\|_{X}$. A particularly relevant role will be played by the inner products $(\cdot,\cdot)_{\facesUnion}$ on the mesh skeleton and $(\cdot,\cdot)_{\partial T}$ on the boundary of a generic element $T\in\mesh$ together with the corresponding norms $\|\cdot\|_{\facesUnion}$ and $\|\cdot\|_{\partial T}$.

\subsection{Local polynomial spaces and projectors}

Let $k\in\IN$. For any $X\in\mesh\cup\faces$, we denote by $\IP^k(X)$ the space spanned by the restrictions to $X$ of polynomials of total degree $\le k$ in the space variables.
At the global level, we define the following broken spaces on the mesh and its skeleton:
\begin{align*}
  \IP^k(\mesh) &\coloneqq \{v \in L^2(\Omega) : v|_T\in \IP^k(T) \text{ for any } T\in\mesh\},\\
  \IP^k(\faces) &\coloneqq \{v \in L^2(\facesUnion) : v|_F\in \IP^k(F) \text{ for any } F\in\faces\},
\end{align*}
and, locally for any $T\in\mesh$, we let
\[
\IP^k(\facesLoc) \coloneqq \{v \in L^2(\partial T) : v|_F\in \IP^k(F) \text{ for any } F\in\facesLoc\}.
\]
For any $X \in \mesh \cup \faces$, the $L^2$-orthogonal projector $\Pi^k_X : L^2(X)\rightarrow \IP^k(X)$ is defined such that, for any $v\in L^2(X)$,
\begin{align*}
  (\Pi^k_X v, w )_X &= (v, w)_X \qquad \forall w\in \IP^k(X).
\end{align*}
The projector $\boldsymbol \Pi^k_X : L^2(X)^d\rightarrow \IP^k(X)^d$ acting on vector-valued fields is obtained applying $\Pi^k_X$ component-wise.
The global skeletal $L^2$-projector $\projLTraceGlob : L^2\left(\facesUnion \right)\rightarrow \IP^k(\faces)$ is defined such that, for any $v\in L^2\left(\facesUnion \right)$,
\begin{align*}
  (\projLTraceGlob v, w)_{\facesUnion} &= (v, w)_\facesUnion \qquad \forall w\in \IP^k(\faces).
\end{align*}
Its local counterpart on an element $T \in \mesh$ is $\projLTraceLoc : L^2\left(\partial T \right)\rightarrow \IP^k(\facesLoc)$ such that, for any $v\in L^2\left(\partial T \right)$,
\begin{align*}
  (\projLTraceLoc v, w )_{\partial T} &= (v, w)_{\partial T} \qquad \forall w\in \IP^k(\facesLoc).
\end{align*}
These projectors are obviously weakly contractive in their respective norms.
In particular, we have
\begin{equation}
  \label{eq: stability of L2-projection on local traces}
  \|\projLTraceLoc u\|_{\partial T} \le \|u\|_{\partial T} \qquad \forall u\in L^2(\partial T).
\end{equation}

Finally, for any $T \in \mesh$, the elliptic projector $\varpi^k_{T} : H^1(T)\rightarrow \IP^k(T)$ is defined such that, for any $v\in H^1(T)$,
\begin{align*}
  (\nabla\varpi^k_{T} v, \nabla w)_{T} &= (\nabla v, \nabla w)_{T} \qquad \forall w\in \IP^k(T),
  \\
  \int_{T} \varpi^k_{T} v\dx &= \int_{T} v\dx.
\end{align*}

\subsection{Local inequalities}\label{sec: setting: inequalities}

From this point on, we will write $a \lesssim b$ to signify that there exists a constant $C> 0$ independent of $h$ and any involved polynomial degree such that $a\le Cb$. For local inequalities on a mesh element $T\in\mesh$, the constant $C$ is additionally independent of $T$.

From \cite[Lemma 1.41]{Di-Pietro.Ern:12}, we have the multiplicative trace inequality
\begin{equation}
  \label{eq: multiplicative trace inequality}
  \|v\|^2_{\partial T} \lesssim \|v\|_T\|\nabla v\|_T + h_T^{-1}\|v\|_T^2 \qquad \forall v\in H^1(T).
\end{equation}
Let $k\ge 1$ be a polynomial degree and let $m\in\IN$ satisfy $1\le m \le k+1$.
From \cite{Aghili.Di-Pietro.ea:17}, we state the following two results on $hp$-analysis. The first is the discrete trace inequality
\begin{align}
  \label{eq: discrete inverse trace inequality}
  \|v\|_{\partial T} &\lesssim \frac{k}{h_T^{\frac{1}{2}}} \|v\|_T \qquad \forall v\in \IP^k(T).
\end{align}
The second is the fundamental approximation result in $hp$-analysis:
For any $T\in\mesh$ and any $v\in H^m(T)$, there exists $P_T^kv\in\IP^k(T)$ such that, for all $0\le q\le m$, there holds
\begin{equation}
  \label{eq: fundamental approximation result}
  \|v - P^k_Tv\|_{q,T} \lesssim \Big(\frac{h_T}{k}\Big)^{m-q}\|v\|_{m,T}.
\end{equation}

Successively applying \eqref{eq: multiplicative trace inequality} and \eqref{eq: fundamental approximation result}, we infer also, for all $0\le q\le m - 1$,
\begin{equation}
  \label{eq: fundamental approximation result on trace}
  \|v - P^k_Tv\|_{q,\partial T} \lesssim \Big(\frac{h_T}{k}\Big)^{m-q - \frac{1}{2}}\|v\|_{m,T}.
\end{equation}
The approximation properties in $h_T$ and $k$ provided in equations \eqref{eq: fundamental approximation result} and \eqref{eq: fundamental approximation result on trace} characterize the \emph{optimal} error decay rates in $hp$-analysis.

\begin{lem}[Approximation properties of the $L^2$-orthogonal and elliptic projectors] \label{lemma: Approximation of L2 and elliptic projectors}
  For any $T\in\mesh$ and $v\in H^m(T)$, we have
  \begin{align}
    \label{eq: approximation L2-projection on cell}
    \|v - \projLLoc v\|_{T} &\lesssim \Big(\frac{h_T}{k}\Big)^{m}\|v\|_{m,T},
    \\
    \label{eq: approximation elliptic-projection in first Sobolev norm on cell}
    \|\nabla v - \nabla\projEllipticLoc v\|_T &\lesssim \Big(\frac{h_T}{k}\Big)^{m-1}\|v\|_{m,T}.
  \end{align}
\end{lem}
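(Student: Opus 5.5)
Both estimates follow from the best-approximation characterizations of the two projectors, combined with the fundamental approximation result \eqref{eq: fundamental approximation result} applied to $P^k_T v$.

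For \eqref{eq: approximation L2-projection on cell}, I use that $\projLLoc v$ is the $L^2(T)$-best approximation of $v$ in $\IP^k(T)$. Since $P^k_T v\in\IP^k(T)$, this gives
\[
\|v-\projLLoc v\|_T\le \|v-P^k_Tv\|_T .
\]
Applying \eqref{eq: fundamental approximation result} with $q=0$ bounds the right-hand side by $(h_T/k)^m\|v\|_{m,T}$.

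For \eqref{eq: approximation elliptic-projection in first Sobolev norm on cell}, the definition of $\projEllipticLoc$ says that $\nabla\projEllipticLoc v$ is the $L^2(T)^d$-orthogonal projection of $\nabla v$ onto $\nabla\IP^k(T)$. Hence $\nabla\projEllipticLoc v$ is the best approximation of $\nabla v$ among gradients of degree-$k$ polynomials, and
\[
\|\nabla v-\nabla\projEllipticLoc v\|_T\le\|\nabla(v-P^k_Tv)\|_T\le \|v-P^k_Tv\|_{1,T}.
\]
The last step uses the fact that the $H^1$ seminorm is controlled by the full $H^1$ norm. Then \eqref{eq: fundamental approximation result} with $q=1\le m$ gives the bound $(h_T/k)^{m-1}\|v\|_{m,T}$.

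The only point needing care is that $m\ge1$, so that $q=1$ is admissible in the second estimate; this holds by the standing assumption $1\le m\le k+1$. No step is a genuine obstacle: the nontrivial content (the existence of $P^k_Tv$ with simultaneous optimal $hp$ bounds in all $H^q$ norms, uniformly in $T$) is already contained in \eqref{eq: fundamental approximation result}. The hidden constants are therefore those of that result and are independent of $h_T$, $k$, and $T$.
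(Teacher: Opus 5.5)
Your proof is correct and follows the same approach as the paper. Both arguments use the fact that $\Pi_T^k v$ and $\nabla\varpi_T^k v$ are best approximations in the norms appearing on the left-hand sides, and then apply the fundamental approximation result for $P_T^k v$ with $q=0$ and $q=1$; you have simply written out the steps that the paper compresses into one line.
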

\begin{proof}
  The relations are immediate from \eqref{eq: fundamental approximation result} as each of the projections gives a minimizing polynomial with respect to the left hand-side norm.
\end{proof}

We shall furthermore assume the following $hp$-trace approximation results:
For any $T\in\mesh$ and any $v\in H^m(T)$,
\begin{equation}
  \label{eq: approximation L2-projection on trace}
  \|v - \projLLoc v\|_{\partial T} \lesssim \Big(\frac{h_T}{k}\Big)^{m - \frac{1}{2}}\|v\|_{m,T}
\end{equation}
and
\begin{equation}
  \label{eq: approximation L2-projection on trace FIRST ORDER}
  \|v - \projLLoc v\|_{\partial T} \lesssim \errFacLoc^{ \frac{1}{2}}\|\nabla v\|_{T}.
\end{equation}

\begin{rem}[$hp$-approximation results]
  The $p$-version of \eqref{eq: approximation L2-projection on trace FIRST ORDER} is proved for simplices in \cite{Chernov:12}.
  The $hp$-version can be inferred from a standard scaling argument.
  In \cite{Houston.Schwab.ea:02}, the authors prove \eqref{eq: approximation L2-projection on trace} in arbitrary space dimension when using tensor-product polynomial spaces, with $H^m$-norm replaced by the $H^m$-seminorm in the right-hand side.
  
  Very recent results communicated to us by Simon Lemaire yield improvements of the above classical estimates.
  In particular:
  \begin{itemize}
  \item all the occurrences of the norm $\| v \|_{m,T}$ in this section can be replaced by the seminorm $| v |_{m,T}$.
    This results in similar changes to the estimates \eqref{eq: theorem non-condensed energy error estimate} and \eqref{eq: theorem non-condensed L2 error estimate} below;
  \item the trace approximation property \eqref{eq: approximation L2-projection on trace} (with $\| v \|_{m,T}$ replaced by $| v |_{m,T}$) has been proved on star-shaped elements.
    From this relation, \eqref{eq: approximation L2-projection on trace FIRST ORDER} follows immediately.
  \end{itemize}
  Notice that the best approximation polynomial $P_T^k v$ in the improved versions of \eqref{eq: fundamental approximation result} and \eqref{eq: fundamental approximation result on trace} also depends on $m$ (which is not a concern, since $m$ is a fixed parameter in the error estimates of Section \ref{section: hho main results}).
  A paper detailing the above improvements is in the process of being finalized, and we will add a precise reference in the final version of the present work.
\end{rem}

\subsection{Elliptic regularity assumption} \label{section: elliptic regularity}

Let $D$ denote either $\Omega$ or a mesh element $T\in\mesh$. Let $g\in L^2(D)$ and let $w\in H_0^1(D)$ solve
\begin{equation*}
  (\nabla w, \nabla v)_D = (g, v)_D \qquad \forall v\in H_0^{1}(D).
\end{equation*}
We shall assume that then $w\in H^2(D)$ and
\begin{equation}
  \label{eq: elliptic regularity}
  \|w\|_{2,D} \lesssim \|g\|_D.
\end{equation}
In the case $d=2$ and $T\in\mesh$ convex, these assumptions hold as a result of \cite[Theorem 4.3.1.4]{Grisvard:11}, together with the inequalities
\begin{align*}
  \|w\|_D &\lesssim h_D^2 \|\Delta w\|_D\\
  \|\nabla w\|_D &\lesssim h_D \|\Delta w\|_D,
\end{align*}
which are derived from the Poincaré inequality $\|w\|_D \le \pi^{-1}  h_D\|\nabla w\|_D$ (see \cite{Payne.Weinberger:60,Bebendorf:03}) and an integration by parts of $(\nabla w, \nabla w)_D$.
These inequalities together with the fact that, for all choices of $D$, $h_D \le h_\Omega \lesssim 1$ show that the hidden constant in \eqref{eq: elliptic regularity} can indeed be taken independent of $T\in\mesh$.

\begin{rem}[Elliptic regularity]
  Assumption \eqref{eq: elliptic regularity} will be needed for the condensed error analysis of Section \ref{section: condensed error analysis} and for the multigrid analysis of Section \ref{section: multigrid analysis non-inherited W-cycle}.
  One could carry out these analyses with a weaker regularity assumption as well, which would weaken the corresponding results and introduce a dependence on the regularity parameter, but for simplicity we shall assume the full $H^2$-regularity.
\end{rem}


\section{Non-condensed and condensed HHO schemes} \label{section: hho schemes}

We formulate the HHO scheme as well as its condensed version, and state $hp$-error estimates for both.
Throughout this section, the integer $k\ge1$ denotes the polynomial degree of the scheme. We exclude the case $k=0$ as its analysis is different from the case $k>0$ and its $hp$-analysis is equivalent to just $h$-analysis, which is already optimally provided in for example \cite{Di-Pietro.Droniou:20}.

\subsection{Non-condensed HHO scheme}

The discrete space for the HHO formulation at the global level is $\hhoSpaceGlob$, where
\begin{equation}\label{eq:global hho spaces}
  \hhoSpaceCell_h^k \coloneqq \IP^k(\mesh),\qquad
  \hhoSpaceTrace_h^k \coloneqq \left\{\mu_h\in \IP^k(\faces) :
  \text{
    $\mu_h|_F = 0$ for all $F \in \faces$  such that $F \subset \partial \Omega$
  }
  \right\}.
\end{equation}
Locally, for any $T\in\mesh$, the discrete space is $\hhoSpaceLoc$, where
\[
\hhoSpaceCell_T^k \coloneqq \IP^k(T),\qquad
\hhoSpaceTrace_T^k \coloneqq \IP^k(\facesLoc).
\]
For any $\trialGlob\in\hhoSpaceGlob$, we denote by $\trialLoc\in\hhoSpaceLoc$ the restriction to the local components on $T$, for any $T\in\mesh$.
Let $\trialGlob, \testGlob\in\hhoSpaceGlob$. We define the following operations which turn $\hhoSpaceGlob$ into a vector space,
\[
\begin{alignedat}{2}
  \trialGlob+ \testGlob &= \left(\trialGlobCell + \testGlobCell, \trialGlobTrace + \testGlobTrace\right),
  \\
  c \trialGlob &= \left(c\trialGlobCell, c\trialGlobTrace\right) &\qquad& \forall c\in\IR.
\end{alignedat}
\]
We put a similar vector space structure on the local space $\hhoSpaceLoc$. The interpolation of regular enough functions is obtained collecting $L^2$-orthogonal projections on elements and faces:
\[
\begin{aligned}
  \interpolGlobNoArg : H^1(\Omega)&\rightarrow \hhoSpaceGlob,
  \\
  \interpolLocNoArg : H^1(T)&\rightarrow \hhoSpaceLoc.
\end{aligned}
\]
The HHO scheme is based on local operators.  For any $T\in\mesh$, we define the \emph{gradient reconstruction operator} $\gradHor : \hhoSpaceLoc\rightarrow \IP^k(T)^d$ such that, for all $\trialLoc\in\hhoSpaceLoc$,
\begin{equation}
  \label{eq: gradHor definition}
  (\gradHor\trialLoc, \genericVec )_T
  = (\nabla \trialLocCell, \genericVec )_T
  + (\trialLocTrace - \trialLocCell, \genericVec\cdot \normalT)_{\partial T}
  \qquad \forall \genericVec\in \IP^{k}(T)^d.
\end{equation}
The \emph{potential reconstruction operator} $\hor : \hhoSpaceLoc\rightarrow \IP^{k+1}(T)$ is such that, for all $\trialLoc\in\hhoSpaceLoc$,
\[
\begin{alignedat}{2}
  (\nabla\hor\trialLoc, \nabla w )_T &= (\gradHor\trialLoc, \nabla w )_T &\qquad& \forall w\in \IP^{k+1}(T),\\
  \int_T\hor\trialLoc\dx &= \int_T\trialLocCell\dx.
\end{alignedat}
\]
In particular, $\nabla\hor\trialLoc$ is the $L^2$-projection of $\gradHor\trialLoc$ onto $\nabla \IP^{k+1}(T)$ and so
\begin{equation}
  \label{eq: L2-norm hor <= L2-norm gradHor}
  \|\nabla\hor\trialLoc\|_T \le \|\gradHor\trialLoc\|_T.
\end{equation}
We have the following fundamental properties \cite{Di-Pietro.Droniou:20}: For all $w\in H^1(T)$,
\begin{align}
  \label{eq: hor commutation with interpolation}
  \hor\interpolLoc{w} &= \projEllipticLocPlus w,\\
  \label{eq: gradHor commutation with interpolation}
  \gradHor\interpolLoc{w} &= \projLLocVec\nabla w.
\end{align}

We next define the bilinear form
\begin{align*}
  \aNonCondGlob &: \left[\hhoSpaceGlob\right]^2 \rightarrow \IR
\end{align*}
such that, for all $\trialGlob, \testGlob\in\hhoSpaceGlob$, 
\begin{equation}\label{eq:aNonCondGlob}
  \begin{aligned}
    \aNonCondGlob\left(\trialGlob, \testGlob\right) & \coloneqq \sumT \aNonCondLoc\left(\trialLoc, \testLoc\right),
    \\
    \aNonCondLoc
    \left(\trialLoc, \testLoc\right) & \coloneqq \big(\gradHor\trialLoc, \gradHor\testLoc\big)_T
    + \sNonCondLoc\left(\trialLoc, \testLoc\right),
  \end{aligned}
\end{equation}
where $\sNonCondLoc$ is a symmetric and non-negative stabilization term.
  We define the following global norm :
  \begin{equation}\label{eq: aNormNonCondGlob}
    \aNormNonCondGlob{\testGlob} \coloneqq \sqrt{\aNonCondGlob\left(\testGlob, \testGlob\right)},
\end{equation}
as well as the local semi-norms
\begin{equation}\label{eq:non condensed norms}
  \aNormNonCondLoc{\testLoc} \coloneqq \sqrt{\aNonCondLoc\left(\testLoc, \testLoc\right)},\qquad
  \sNormNonCondLoc{\testLoc} \coloneqq \sqrt{\sNonCondLoc\left(\testLoc, \testLoc\right)}.
\end{equation}

We want the stabilization to satisfy the following two properties:
\begin{alignat}{2}
  \label{eq: stabilization: trace - cell dof bound}\tag{S1}
  \|\testLocTrace - \testLocCell\|_{\partial T} &\lesssim \Big(\frac{h_T}{k}\Big)^{\frac{1}{2}} \aNormNonCondLoc{\testLoc} &\qquad& \forall \testLoc\in\hhoSpaceLoc,\\
  \label{eq: stabilization consistency}\tag{S2}
  \sNormNonCondLoc{\interpolLoc w} &\lesssim \Big(\frac{h_T}{k}\Big)^{m-1}\|w\|_{m,T} &\qquad& \forall w\in H^m(T) \text{ with } 1\le m \le k+1.
\end{alignat}
Property \eqref{eq: stabilization: trace - cell dof bound} implies that $\aNonCondGlob$ is coercive with the correct scaling in both $h_T$ and $k$. Property \eqref{eq: stabilization consistency} expresses the consistency of the stabilization term.
In practice, this consistency property is achieved by requiring that the stabilization vanish whenever one of its arguments is the interpolant of a polynomial of degree $\le k+1$.
For this to hold, $\sNonCondLoc$ must depend on its argument through the difference operators
$\big(\delta_T^k, \delta_{\partial T}^k\big):\hhoSpaceLoc\rightarrow \hhoSpaceLoc$ which, for any $\testLoc\in\hhoSpaceLoc$, are defined by
\begin{equation}\label{eq:difference operators}
  \big(\delta_T^k\testLoc, \delta_{\partial T}^k\testLoc\big) \coloneqq \testLoc - \interpolLoc{\hor \testLoc}.
\end{equation}
We shall employ the following choice of stabilization:
\begin{equation}
  \label{eq: standard HHO stabilization}
  \sNonCondLoc\left(\trialLoc, \testLoc \right) = \frac{k}{h_T}\big(\delta_{\partial T}^k\trialLoc - \delta_T^k\trialLoc, \delta_{\partial T}^k\testLoc - \delta_T^k\testLoc \big)_{\partial T}.
\end{equation}
This is the standard choice for HHO, with the exception that the polynomial weight $k$ is new. This weight guarantees that the optimal $k$-dependency in \eqref{eq: stabilization: trace - cell dof bound} and \eqref{eq: stabilization consistency} is satisfied; see Lemma \ref{lem: stabilization consistency} below.
We can then state the HHO discretization of \eqref{eq: continuous Poisson problem}: Find $\trialGlob\in\hhoSpaceGlob$ such that
\begin{equation}
  \label{eq: global problem discretized non-condensed}
  \aNonCondGlob\left(\trialGlob, \testGlob\right) = (f, \testGlobCell)_\Omega \qquad \forall \testGlob\in\hhoSpaceGlob.
\end{equation}

\begin{rem}[VEM-inspired stabilization]
  Another natural choice of stabilization would be one inspired from the virtual element method, but with new polynomial weights, given by
  \begin{equation*}
    \sNonCondLoc\left(\trialLoc, \testLoc \right) = \frac{k}{h_T}\big(\delta_{\partial T}^k\trialLoc, 	\delta_{\partial T}^k\testLoc\big)_{\partial T} + \Big(\frac{k}{h_T}\Big)^2\big(\delta_{T}^k\trialLoc, \delta_{T}^k\testLoc\big)_{T}.
  \end{equation*}
  However, so far we have been short a factor of $k^{\frac{1}{4}}$ in our attempts to show \eqref{eq: stabilization: trace - cell dof bound} with this choice for $\underline{s}_T^k$.
\end{rem}

\begin{rem}[Potential-based consistency term]
  A classical variant of \eqref{eq: global problem discretized non-condensed} is obtained replacing $\gradHor$ with $\nabla\hor$ in the consistency term.
  However, to obtain optimal $hp$-error estimates for this scheme, one  would need optimal $hp$-approximation of $\nabla\projEllipticLoc$ on traces, corresponding to \ref{eq: approximation L2-projection on trace}.
  Proving these approximation properties is an open question on which we are currently working.
\end{rem}

\subsection{Condensed HHO scheme} \label{sec: condensed HHO scheme}
The condensed formulation presented in this section was derived in \cite{Cockburn.Di-Pietro.ea:16}.
For all $T\in\mesh$, we apply static condensation to the element component in $\hhoSpaceCell_T^k$, which means that this component is solved for in terms of the face component in $\hhoSpaceTrace_T^k$.
Specifically, we define the local operator $\recLoc^k : \hhoSpaceTrace_T^k \rightarrow \hhoSpaceCell_T^k$, which, for any $\trialLocTrace\in\hhoSpaceTrace_T^k$, is obtained solving
\begin{equation}
  \label{eq: U_T^k definition}
  \aNonCondLocArg{(\recLoc^k\trialLocTrace, 0)}{(\testLocCell, 0)} = -\aNonCondLocArg{(0, \trialLocTrace)}{(\testLocCell, 0)} \qquad \forall \testLocCell\in\hhoSpaceCell_T^k,
\end{equation}
and the local operator $\vLoc : L^2(T) \rightarrow \hhoSpaceCell_T^k$, which, for any $g\in L^2(T)$, is given by
\begin{equation}
  \label{eq:vLoc}
  \aNonCondLocArg{(\vLoc g, 0)}{(\testLocCell, 0)} = (g, \testLocCell)_T  \qquad \forall \testLocCell\in\hhoSpaceCell_T^k.
\end{equation}
These operators are well-defined because $\hhoSpaceCell_T^k\times \{0\}$ is the discrete space in the (non-condensed) HHO scheme for a Poisson problem on $T$ with homogeneous Dirichlet boundary condition, so $\aNonCondLoc$ is coercive on this set.
We also define the global versions $\recGlob^k : \hhoSpaceTrace_h^k \rightarrow \hhoSpaceCell_h^k$ and $\vGlob: L^2(\Omega) \rightarrow \hhoSpaceCell_h^k$ such that, for all $\trialGlobTrace \in \hhoSpaceTrace_h^k$ and all $f \in L^2(\Omega)$ satisfy, for all $T\in\mesh$,
\[
\big(\recGlob^k\trialGlobTrace\big)|_T \coloneqq \recLoc^k\trialLocTrace,\qquad
\big(\vGlob f\big)|_T \coloneqq \vLoc f|_T.
\]

The condensed version of the bilinear form from the last section is $\aGlob: \hhoSpaceTrace_h^k\times \hhoSpaceTrace_h^k \rightarrow \IR$ such that, for all $\trialGlobTrace, \testGlobTrace\in\hhoSpaceTrace_h^k$,
\begin{equation}\label{eq: aGlob}
\begin{aligned}
  \aGlobArg{\trialGlobTrace}{\testGlobTrace} &\coloneqq
  \aNonCondGlobArg{\trialRecGlob}{\testRecGlob}
  = \sumT \aLocArg{\trialLocTrace}{\testLocTrace},
  \\
  \aLocArg{\trialLocTrace}{\testLocTrace} &\coloneqq \aNonCondLocArg{\trialRecLoc}{\testRecLoc},
\end{aligned}
\end{equation}
and the condensed energy norm $\aNormGlob{\cdot}$ is given by
\begin{equation}\label{eq:aNormGlob}
  \aNormGlob{\testGlobTrace} \coloneqq \aNormNonCondGlob{(\recGlob^k \testGlobTrace, \testGlobTrace)}
  \qquad \forall \testGlobTrace\in\hhoSpaceTrace_h^k.
\end{equation}

The condensed scheme reads: Find $\trialGlob\in\hhoSpaceGlob$ such that
\begin{subequations}\label{eq: condensed problem}
  \begin{align}
    \label{eq: global problem discretized condensed}
    \aGlobArg{\trialGlobTrace}{\testGlobTrace} &= (f, \recGlob^k\testGlobTrace)_\Omega \qquad \forall \testGlobTrace\in\hhoSpaceTrace_h^k,\\
    \label{eq: condensed and non-condendensed solutions relation}
    \trialGlobCell &= \recGlob^k\trialGlobTrace + \vGlob f.
  \end{align}
\end{subequations}
Problem \eqref{eq: condensed problem} is equivalent to \eqref{eq: global problem discretized non-condensed}.

\subsection{Main results}
\label{section: hho main results}

  In the following results, the regularity requirements on the exact solution are expressed in terms of the broken Sobolev spaces
  \[
  H^m(\mesh) \coloneqq \left\{
  v \in L^2(\Omega) : \text{
    $v|_T \in H^m(T)$ for all $T \in \mesh$
  }
  \right\},
  \]
  endowed with the natural norm
  \[
  \| v \|_{m,h} \coloneqq \sqrt{
    \sum_{T \in \mesh}  \| v \|_{m,T}^2
   }.
  \]

\begin{thm}[Non-condensed energy error estimate]
  \label{theorem: non-condensed energy error estimate}
  Let $\trialGlob\in\hhoSpaceGlob$ denote the HHO solution \eqref{eq: global problem discretized non-condensed}, and let $u\in H_0^1(\Omega)$ denote the continuous solution \eqref{eq: continuous Poisson problem}.
  For any integer $m$ such that $2 \le m \le k+1$, assuming $u\in H^{m}(\mesh)$, we have
  \begin{equation}
    \label{eq: theorem non-condensed energy error estimate}
    \aNormNonCondGlob{\trialGlob - \interpolGlob{u}} \lesssim\errFac^{m-1}\|u\|_{m,h}.
  \end{equation}
\end{thm}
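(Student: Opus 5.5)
The plan is the standard Strang-type argument: bound the consistency error with the optimal trace estimates and the new polynomial weight in the stabilization. Set $\errGlob \coloneqq \trialGlob - \interpolGlob{u}\in\hhoSpaceGlob$. Then
\[
\aNormNonCondGlob{\errGlob}^2=\aNonCondGlob(\errGlob,\errGlob)=\mathcal E_h(\errGlob),
\]
where the consistency error is
\[
\mathcal E_h(\testGlob)\coloneqq (f,\testGlobCell)_\Omega-\aNonCondGlob(\interpolGlob{u},\testGlob).
\]
It therefore suffices to show
\[
|\mathcal E_h(\testGlob)|\lesssim \errFac^{m-1}\|u\|_{m,h}\,\aNormNonCondGlob{\testGlob}
\qquad\forall\testGlob\in\hhoSpaceGlob .
\]

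\textbf{Rewriting the consistency error.} Since $m\ge2$, we have $u\in H^2$ elementwise and $f=-\Delta u$. Integrating by parts on each $T$, and using that $\testGlobTrace$ is single valued, vanishes on $\partial\Omega$, and that normal fluxes of $u$ are continuous, gives
\[
(f,\testGlobCell)_\Omega=\sumT\Big[(\nabla u,\nabla\testLocCell)_T+(\nabla u\cdot\normalT,\testLocTrace-\testLocCell)_{\partial T}\Big].
\]
By \eqref{eq: gradHor commutation with interpolation}, $\gradHor\interpolLoc u=\projLLocVec\nabla u$. Applying the definition \eqref{eq: gradHor definition} with $\genericVec=\projLLocVec\nabla u$ yields
\[
(\projLLocVec\nabla u,\gradHor\testLoc)_T=(\projLLocVec\nabla u,\nabla\testLocCell)_T+(\projLLocVec\nabla u\cdot\normalT,\testLocTrace-\testLocCell)_{\partial T}.
\]
Because $\nabla\testLocCell\in\IP^{k-1}(T)^d$, the volume terms cancel. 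Hence
\[
\mathcal E_h(\testGlob)=\sumT\Big[\big((\nabla u-\projLLocVec\nabla u)\cdot\normalT,\testLocTrace-\testLocCell\big)_{\partial T}-\sNonCondLoc(\interpolLoc u,\testLoc)\Big].
\]
This is where the full gradient reconstruction matters: the trace error involves the $L^2$-projector rather than the elliptic projector.

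\textbf{Bounding the two terms.} For the boundary term, apply Cauchy--Schwarz. The first factor is controlled by the trace approximation \eqref{eq: approximation L2-projection on trace}, used componentwise with $v=\partial_i u\in H^{m-1}(T)$ and regularity index $m-1\le k$ (admissible since $m-1\ge1$):
\[
\|\nabla u-\projLLocVec\nabla u\|_{\partial T}\lesssim\errFacLoc^{m-\frac32}\|u\|_{m,T}.
\]
The second factor is controlled by \eqref{eq: stabilization: trace - cell dof bound}:
\[
\|\testLocTrace-\testLocCell\|_{\partial T}\lesssim\errFacLoc^{\frac12}\aNormNonCondLoc{\testLoc}.
\]
The product is $\errFacLoc^{m-1}\|u\|_{m,T}\aNormNonCondLoc{\testLoc}$, so the half powers combine to the optimal rate. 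For the stabilization term, Cauchy--Schwarz for the semi-inner product $\sNonCondLoc$ together with \eqref{eq: stabilization consistency} gives
\[
|\sNonCondLoc(\interpolLoc u,\testLoc)|\le\sNormNonCondLoc{\interpolLoc u}\,\sNormNonCondLoc{\testLoc}\lesssim\errFacLoc^{m-1}\|u\|_{m,T}\,\aNormNonCondLoc{\testLoc}.
\]
Summing over $T$ with a discrete Cauchy--Schwarz inequality and using $h_T\le h$ concludes the estimate.

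\textbf{Main obstacle.} The argument depends on (S1) and (S2) holding for the weighted stabilization \eqref{eq: standard HHO stabilization}; this is the content of Lemma \ref{lem: stabilization consistency}, which I would prove first.

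For (S2), $\hor\interpolLoc w=\projEllipticLocPlus w$ by \eqref{eq: hor commutation with interpolation}, so
\[
\delta^k_{\partial T}-\delta^k_T=\projLTraceLoc(w-\projEllipticLocPlus w)-\projLLoc(w-\projEllipticLocPlus w).
\]
I would bound this on $\partial T$ by inserting $w$ and using \eqref{eq: stability of L2-projection on local traces}. The pieces $\|w-\projEllipticLocPlus w\|_{\partial T}$ and $\|w-\projLLoc w\|_{\partial T}$ are handled by \eqref{eq: approximation L2-projection on trace}. For $\projLLoc(w-\projEllipticLocPlus w)$, the discrete trace inequality \eqref{eq: discrete inverse trace inequality} costs a factor $k$; to avoid it, I would instead write it as $(\projLLoc w-w)+(w-\projEllipticLocPlus w)+(\projEllipticLocPlus w-\projLLoc\projEllipticLocPlus w)$ and use the multiplicative trace inequality with $H^1$ elliptic-projection estimates, aiming for the rate $\errFacLoc^{m-\frac12}$, which after the weight $k/h_T$ gives $\errFacLoc^{m-1}$.

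For (S1), write
\[
\testLocTrace-\testLocCell=(\delta^k_{\partial T}-\delta^k_T)+\projLTraceLoc\hor\testLoc-\projLLoc\hor\testLoc .
\]
The first part is bounded by $(h_T/k)^{1/2}\sNormNonCondLoc{\testLoc}$. For the second, insert $\hor\testLoc$ and apply \eqref{eq: approximation L2-projection on trace FIRST ORDER} to get $(h_T/k)^{1/2}\|\nabla\hor\testLoc\|_T\le(h_T/k)^{1/2}\|\gradHor\testLoc\|_T$ by \eqref{eq: L2-norm hor <= L2-norm gradHor}. Securing exactly the $(h_T/k)^{1/2}$ scaling in these trace terms, without losing powers of $k$ through inverse inequalities, is the delicate point.
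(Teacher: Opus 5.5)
Your proof of the estimate is correct and essentially coincides with the paper's. You derive the same consistency identity
\[
\aNonCondGlob(\errGlob,\testGlob)=\sumT\Big[\big((\nabla u-\projLLocVec\nabla u)\cdot\normalT,\testLocTrace-\testLocCell\big)_{\partial T}-\sNonCondLoc(\interpolLoc u,\testLoc)\Big],
\]
then use the same weighted Cauchy--Schwarz splitting. It pairs \eqref{eq: approximation L2-projection on trace} (for $\nabla u$, index $m-1$) with \eqref{eq: stabilization: trace - cell dof bound}, and \eqref{eq: stabilization consistency} with the stabilization seminorm. Your sketch of \eqref{eq: stabilization: trace - cell dof bound} also matches the paper's proof of Lemma~\ref{lem: stabilization consistency}.

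Your side sketch of \eqref{eq: stabilization consistency} needs one correction. The paper proves it separately in Lemma~\ref{lem: stabilization consistency}, so the theorem can simply cite it. As written, your sketch does not give the optimal rate.

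\begin{itemize}
\item Set $g\coloneqq w-\projEllipticLocPlus w$. Bounding $\projLTraceLoc g$ and $\projLLoc g$ separately forces you to bound $\|g\|_{\partial T}$ itself. Estimate \eqref{eq: approximation L2-projection on trace} concerns $w-\projLLoc w$, not the elliptic projector, so it does not apply. With the tools of Section~\ref{sec: setting: inequalities}, the natural route is the multiplicative trace inequality plus Poincar\'e for the zero-mean $g$. This only gives $\|g\|_{\partial T}\lesssim h_T^{\frac12}\|\nabla g\|_T$, a loss of $k^{\frac12}$.
\item The piece $\projEllipticLocPlus w-\projLLoc\projEllipticLocPlus w$ is also not controlled on its own. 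Estimate \eqref{eq: approximation L2-projection on trace FIRST ORDER} gives only $\errFacLoc^{\frac12}\|\nabla\projEllipticLocPlus w\|_T$, which does not decay.
\end{itemize}

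The fix is the trick you already used for \eqref{eq: stabilization: trace - cell dof bound}, applied to $g$ instead of $w$. Since $(\projLLoc g)|_{\partial T}\in\IP^k(\facesLoc)$, we have $\projLTraceLoc g-\projLLoc g=\projLTraceLoc(g-\projLLoc g)$. Then \eqref{eq: stability of L2-projection on local traces}, \eqref{eq: approximation L2-projection on trace FIRST ORDER} and \eqref{eq: approximation elliptic-projection in first Sobolev norm on cell} give
\[
\|\projLTraceLoc g-\projLLoc g\|_{\partial T}\le\|g-\projLLoc g\|_{\partial T}\lesssim\errFacLoc^{\frac12}\|\nabla g\|_T\lesssim\errFacLoc^{m-\frac12}\|w\|_{m,T}.
\]
After the weight $\errFacInvLoc^{\frac12}$ this yields \eqref{eq: stabilization consistency}, with no trace bound on $g$ itself needed.
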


\begin{thm}[$L^2$-error estimate]
  \label{theorem: non-condensed L2-error estimate}
  In the setting of Theorem \ref{theorem: non-condensed energy error estimate}, and assuming additionally $f\in H^{m-2}(\mesh)$, we have
  \begin{equation}
    \label{eq: theorem non-condensed L2 error estimate}
    \|\trialGlobCell - \projLGlob u\|_\Omega \lesssim\errFac^{m}\Big(\|u\|_{m,h} + \|f\|_{m-2,h}\Big).
  \end{equation}
\end{thm}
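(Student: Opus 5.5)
We prove the $L^2$-estimate of Theorem \ref{theorem: non-condensed L2-error estimate} by an Aubin--Nitsche duality argument at the discrete level. Set $(e_h,\epsilon_h)\coloneqq \trialGlob-\interpolGlob{u}$. Let $z\in H_0^1(\Omega)$ solve \eqref{eq: continuous Poisson problem} with source $e_h$. By \eqref{eq: elliptic regularity} on $D=\Omega$, $z\in H^2(\Omega)$ and $\|z\|_{2,\Omega}\lesssim\|e_h\|_\Omega$. The starting identity is
\[
\|e_h\|_\Omega^2=(e_h,-\Delta z)_\Omega=\aNonCondGlobArg{(e_h,\epsilon_h)}{\interpolGlob z}+\mathcal{E}_h(z;(e_h,\epsilon_h)),
\]
where $\mathcal{E}_h(z;\cdot)$ is the consistency error of the scheme applied to $z$. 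The same error functional is used for $u$ in the proof of Theorem \ref{theorem: non-condensed energy error estimate}.

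\textbf{Step 1: the adjoint consistency error.} Integrate $(-\Delta z,v_T)_T$ by parts on each $T$. Insert the single-valued face values $\nu_h$, which vanish on $\partial\Omega$, using continuity of $\nabla z\cdot\boldsymbol n$ and $z\in H^1_0$. Then apply \eqref{eq: gradHor definition} with $\genericVec=\projLLocVec\nabla z$ and \eqref{eq: gradHor commutation with interpolation}. This gives
\[
\mathcal{E}_h(z;(v_h,\nu_h))=\sumT\Big[(\nabla z-\projLLocVec\nabla z)\cdot\normalT,\nu_T-v_T\Big]_{\partial T}-\sumT\sNonCondLocArg{\interpolLoc z}{(v_T,\nu_T)}.
\]
We bound the first term with \eqref{eq: approximation L2-projection on trace FIRST ORDER} (or \eqref{eq: approximation L2-projection on trace} with $m=2$ applied to $\nabla z$) together with (S1). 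For the second term we use Cauchy--Schwarz and (S2) with $m=2$. Both yield $|\mathcal{E}_h(z;\cdot)|\lesssim (h/k)\|z\|_{2,h}\,\aNormNonCondGlob{\cdot}$.

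\textbf{Step 2: the main term.} Use symmetry and the discrete error equation $\aNonCondGlobArg{(e_h,\epsilon_h)}{w_h}=-\mathcal{E}_h(u;w_h)$ with $w_h=\interpolGlob z$. This gives $\aNonCondGlobArg{(e_h,\epsilon_h)}{\interpolGlob z}=-\mathcal{E}_h(u;\interpolGlob z)$. A naive bound gives $(h/k)^{m-1}\|u\|_{m,h}\cdot\aNormNonCondGlob{\interpolGlob z}$, which loses an order, so we expand $\mathcal{E}_h(u;\interpolGlob z)$ explicitly. It contains three pieces.

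First, there is a stabilization piece $\sNonCondLocArg{\interpolLoc u}{\interpolLoc z}$. By (S2) applied to both arguments it is bounded by $(h/k)^{m-1}(h/k)\|u\|_{m,h}\|z\|_{2,h}$.

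Second, there is a flux piece $\big((\nabla u-\projLLocVec\nabla u)\cdot\normalT,\projLTraceLoc z-\projLLoc z\big)_{\partial T}$. Write $\projLTraceLoc z-\projLLoc z=\projLTraceLoc(z-\projLLoc z)$. Then \eqref{eq: stability of L2-projection on local traces} with \eqref{eq: approximation L2-projection on trace} at $m=2$ gives $(h/k)^{3/2}$. Together with $(h/k)^{m-3/2}$ from \eqref{eq: approximation L2-projection on trace} applied to $\nabla u\in H^{m-1}$, this gives $(h/k)^{m}$.

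Third, there is a source piece. We rewrite it as $(f,z-\projLLoc z)_T=(f-\Pi^{k}_Tf,z-\projLLoc z)_T$. Bounding $f-\Pi_T^{k}f$ by \eqref{eq: approximation L2-projection on cell} with regularity $m-2$ and $z-\projLLoc z$ with $m=2$ gives $(h/k)^{m}\|f\|_{m-2,h}\|z\|_{2,h}$. This is where the assumption $f\in H^{m-2}(\mesh)$ enters. We must check that $\mathcal{E}_h(u;\cdot)$ is written so that the cell test function appears only through $(f,v_h)$ minus the consistent term; the precise bookkeeping should follow Step 1 with $u$ in place of $z$.

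\textbf{Step 3: conclusion.} Bound the remaining term by Step 1 and Theorem \ref{theorem: non-condensed energy error estimate}: $|\mathcal{E}_h(z;(e_h,\epsilon_h))|\lesssim (h/k)\|z\|_{2,h}(h/k)^{m-1}\|u\|_{m,h}$. Summing the contributions and using $\|z\|_{2,h}\lesssim\|e_h\|_\Omega$ gives \eqref{eq: theorem non-condensed L2 error estimate} after dividing by $\|e_h\|_\Omega$.

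\textbf{Main obstacle.} The hard part is Step 2: organizing $\mathcal{E}_h(u;\interpolGlob z)$ so that every term is a product of two approximation errors with optimal $k$-powers. No inverse inequality may be used, since it would cost powers of $k$. The key tools are the trace estimate \eqref{eq: approximation L2-projection on trace} for the $L^2$-projector and the $k/h_T$ weighting of the stabilization.
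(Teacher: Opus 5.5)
Your argument is correct in substance and follows the paper's architecture: duality with $z$, then the splitting $\|e_h\|_\Omega^2=\aNonCondGlobArg{\errGlob}{\interpolGlob z}+\mathcal E_h(z;\errGlob)$. Your Step 1 is exactly the paper's term $\mathfrak T_1$ and is bounded the same way.

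There is, however, a bookkeeping error in Step 2. With your Step 1 convention the error equation reads $\aNonCondGlobArg{\errGlob}{w_h}=+\mathcal E_h(u;w_h)$, not with a minus sign. Taking $w_h=\interpolGlob z$ in the first line of \eqref{eq: energy error estimate intermediate} gives the exact two-term identity
\[
\aNonCondGlobArg{\errGlob}{\interpolGlob z}=\sumT\Big[\big((\nabla u-\projLLocVec\nabla u)\cdot\normalT,\projLTraceLoc z-\projLLoc z\big)_{\partial T}-\sNonCondLocArg{\interpolLoc u}{\interpolLoc z}\Big].
\]
There is no third ``source piece''. The term $(f-\projLLoc f,\projLLoc z-z)_T$ appears only if you instead combine the discrete problem tested with $\interpolGlob z$ and \eqref{eq: continuous Poisson problem} tested with $z$, which is what the paper does. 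That gives $\sumT\big[(f-\projLLoc f,\projLLoc z-z)_T+(\nabla u-\projLLocVec\nabla u,\nabla z-\projLLocVec\nabla z)_T-\sNonCondLocArg{\interpolLoc u}{\interpolLoc z}\big]$. Summed over $T$, your flux piece equals the source piece plus this volumetric gradient piece. So the two are alternative representations of the same quantity, not summands, and your three-term list is not an identity. Each of your three bounds is correct and of the target order, so the final estimate survives once you drop the source piece.

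With that fixed, the two routes differ only in how the primal term is represented.
\begin{itemize}
\item Your route reuses the consistency identity from the energy proof. It needs the higher-order trace approximation \eqref{eq: approximation L2-projection on trace} for $\nabla u\in H^{m-1}(T)$, already assumed for Theorem \ref{theorem: non-condensed energy error estimate}. It also needs $\|z-\projLLoc z\|_{\partial T}\lesssim (h_T/k)^{3/2}\|z\|_{2,T}$, which requires $k\ge 1$. In exchange it never uses $f\in H^{m-2}(\mesh)$. This is consistent with $f=-\Delta u$, which gives $\|f\|_{m-2,h}\lesssim\|u\|_{m,h}$ anyway.
\item The paper's route handles the $u$-part with the cell estimate \eqref{eq: approximation L2-projection on cell} only. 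This source-term form is the classical HHO argument and remains usable when $k=0$, where $\|z-\Pi_T^0 z\|_{\partial T}$ is only $O(h_T^{1/2})$. It is also the template reused in the proof of \eqref{eq: U_T^k L2 error estimate}, where harmonicity of $u_T^H$ removes the source term.
\end{itemize}
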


\begin{thm}[Condensed energy error estimate to first order]
  \label{theorem: condensed energy error estimate}
  Let $\trialGlobTrace\in\hhoSpaceTrace_h^k$ denote the trace component of the HHO solution \eqref{eq: global problem discretized condensed} and let $u\in H_0^1(\Omega)$ denote the continuous solution \eqref{eq: continuous Poisson problem}. We have
  \begin{equation}
    \label{eq: theorem condensed energy error estimate}
    \aNormGlob{\trialGlobTrace - \projLTraceGlob u} \lesssim \frac{h}{k}\|f\|_{\Omega}.
  \end{equation}
\end{thm}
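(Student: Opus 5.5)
Because $\aGlobArg{\cdot}{\cdot}$ is the restriction of $\aNonCondGlob$ to the discrete harmonic pairs $(\recGlob^k\nu_h,\nu_h)$, I will view the condensed error through the non-condensed one. Set $\epsilon_h \coloneqq \trialGlobTrace - \projLTraceGlob u$. By \eqref{eq:aNormGlob}, $\aNormGlob{\epsilon_h} = \aNormNonCondGlob{(\recGlob^k\epsilon_h,\epsilon_h)}$. The first step is the energy-minimization property of static condensation: by \eqref{eq: U_T^k definition}, $(\recGlob^k\epsilon_h,\epsilon_h)$ is $\aNonCondGlob$-orthogonal to every $(v_h,0)$. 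Hence, for any $w_h\in\hhoSpaceCell_h^k$,
\[
\aNormNonCondGlob{(w_h,\epsilon_h)}^2=\aNormGlob{\epsilon_h}^2+\aNormNonCondGlob{(w_h-\recGlob^k\epsilon_h,0)}^2,
\]
so that $\aNormGlob{\epsilon_h}\le \aNormNonCondGlob{(w_h,\epsilon_h)}$. I will choose $w_h = \trialGlobCell-\projLGlob u$, which gives
\[
\aNormGlob{\epsilon_h}\le\aNormNonCondGlob{\trialGlob-\interpolGlob u}.
\]

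The second step bounds the right-hand side by Theorem \ref{theorem: non-condensed energy error estimate} with $m=2$, which yields $\lesssim (h/k)\|u\|_{2,h}$. Because $\Omega$ is convex, elliptic regularity \eqref{eq: elliptic regularity} with $D=\Omega$ gives $u\in H^2(\Omega)$ and $\|u\|_{2,h}\le\|u\|_{2,\Omega}\lesssim\|f\|_\Omega$. Combining the two steps gives \eqref{eq: theorem condensed energy error estimate}.

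The one point to check carefully is the orthogonality identity, since the proof hinges on it. I will verify it directly from the definitions. Write $(w_h,\epsilon_h)=(\recGlob^k\epsilon_h,\epsilon_h)+(w_h-\recGlob^k\epsilon_h,0)$. The cross term $\aNonCondGlob\big((\recGlob^k\epsilon_h,\epsilon_h),(v_h,0)\big)$ vanishes elementwise by \eqref{eq: U_T^k definition} together with bilinearity. This is routine, so the real work is Theorem \ref{theorem: non-condensed energy error estimate}, which I take as given. One caveat: Theorem \ref{theorem: non-condensed energy error estimate} requires $m=2\le k+1$, which holds since $k\ge1$. The elliptic-regularity assumption on $T$ stated in Section \ref{section: elliptic regularity} is not needed here; only the case $D=\Omega$ is used.
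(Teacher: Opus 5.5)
Your proof is correct, and it takes a different and stronger route than the paper's.

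The paper starts from the same splitting of the non-condensed error, $(u_h-\Pi_{\mathcal T_h}^k u,\epsilon_h)=(U_h^k\epsilon_h,\epsilon_h)+(u_h-\Pi_{\mathcal T_h}^k u-U_h^k\epsilon_h,0)$ with $u_h=U_h^k\mu_h+V_h^kf$. It then combines the two pieces with a triangle inequality, so it must bound $\|(U_h^k\Pi_{\mathcal F_h}^k u+V_h^kf-\Pi_{\mathcal T_h}^k u,0)\|_{\underline a_h^k}$ separately. It does this using the local first-order estimates for $U_T^k$ and $V_T^k$, the decomposition $u|_T=u_T^P+u_T^H$, and the bound $\sum_{T\in\mathcal T_h}\|u_T^H\|_{2,T}^2\lesssim\|f\|_\Omega^2$. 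All of these use elliptic regularity on every element, hence convex elements.

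You observe that $(U_h^k\epsilon_h,\epsilon_h)$ is $\underline a_h^k$-orthogonal to $\mathcal U_h^k\times\{0\}$. This shows the second piece can simply be dropped, because the condensed norm is the minimum of the non-condensed energy over all element components.

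This gains more than brevity. You only need $H^2$-regularity on $\Omega$. And since nothing beyond Theorem~\ref{theorem: non-condensed energy error estimate} enters, the same argument gives the higher-order bound $\|\mu_h-\Pi_{\mathcal F_h}^k u\|_{a_h^k}\lesssim (h/k)^{m-1}\|u\|_{m,h}$ whenever $u\in H^m(\mathcal T_h)$ with $2\le m\le k+1$. The paper's route cannot reach this, because its local estimates for $U_T^k$ and $V_T^k$ are only first order.

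What the paper's route buys is that those local lemmas are needed anyway, in their $L^2$ form. Theorem~\ref{theorem: condensed L2-error estimate} and the multigrid analysis must control $\|U_h^k\epsilon_h\|_\Omega$, and no analogous minimization property holds for the $L^2$-norm.
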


\begin{thm}[Trace $L^2$-error estimate to first order]
  \label{theorem: condensed L2-error estimate}
  In the setting of Theorem \ref{theorem: condensed energy error estimate}, we have
  \begin{equation}
    \label{eq: theorem condensed L2 error estimate}
    \|\trialGlobTrace - \projLTraceGlob u\|_\facesUnion \lesssim k^{\frac14}\errFac^{\frac{3}{2}}\|f\|_{\Omega}.
  \end{equation}
\end{thm}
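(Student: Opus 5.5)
The plan is to reduce the trace estimate to the non-condensed results, Theorems \ref{theorem: non-condensed energy error estimate} and \ref{theorem: non-condensed L2-error estimate}, with $m=2$. Two facts make this possible. First, the condensed problem \eqref{eq: condensed problem} is equivalent to \eqref{eq: global problem discretized non-condensed}, so $\trialGlobTrace$ is exactly the trace component of the non-condensed solution $\trialGlob$. Second, by the elliptic regularity \eqref{eq: elliptic regularity} on $\Omega$, $u\in H^2(\Omega)$ with $\|u\|_{2,h}\le\|u\|_{2,\Omega}\lesssim\|f\|_\Omega$.

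Set $e_h\coloneqq\trialGlobCell-\projLGlob u$ and $\epsilon_h\coloneqq\trialGlobTrace-\projLTraceGlob u$. Since each interior face is counted twice in the element-boundary sum, $\|\epsilon_h\|_{\facesUnion}^2\le\sumT\|\epsilon_T\|_{\partial T}^2$. On each element we split
\[
\|\epsilon_T\|_{\partial T}\le\|\epsilon_T-e_T\|_{\partial T}+\|e_T\|_{\partial T}.
\]

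\emph{First term.} By \eqref{eq: stabilization: trace - cell dof bound}, $\|\epsilon_T-e_T\|_{\partial T}\lesssim(h_T/k)^{1/2}\aNormNonCondLoc{(e_T,\epsilon_T)}$. Squaring, summing over $T$, and using quasi-uniformity \eqref{eq: mesh quasi uniformity assumption} together with \eqref{eq: theorem non-condensed energy error estimate} for $m=2$, this contributes $(h/k)^{1/2}(h/k)\|f\|_\Omega=(h/k)^{3/2}\|f\|_\Omega$.

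\emph{Second term: the main obstacle.} A direct use of the discrete trace inequality \eqref{eq: discrete inverse trace inequality} with the $L^2$ estimate gives $k h^{-1/2}(h/k)^2=k^{1/2}(h/k)^{3/2}$. That loses a factor $k^{1/4}$, so a sharper route is needed. I would instead use the multiplicative trace inequality \eqref{eq: multiplicative trace inequality}:
\[
\|e_T\|_{\partial T}^2\lesssim\|e_T\|_T\|\nabla e_T\|_T+h_T^{-1}\|e_T\|_T^2 .
\]
To control $\|\nabla e_T\|_T$, test the definition \eqref{eq: gradHor definition} with $\genericVec=\nabla e_T\in\IP^{k}(T)^d$. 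This gives
\[
\|\nabla e_T\|_T^2=(\gradHor(e_T,\epsilon_T),\nabla e_T)_T-(\epsilon_T-e_T,\nabla e_T\cdot\normalT)_{\partial T}.
\]
Bounding the boundary term with \eqref{eq: discrete inverse trace inequality} applied componentwise to $\nabla e_T$, together with \eqref{eq: stabilization: trace - cell dof bound}, yields
\[
\|\nabla e_T\|_T\lesssim\|\gradHor(e_T,\epsilon_T)\|_T+\frac{k}{h_T^{1/2}}\Big(\frac{h_T}{k}\Big)^{1/2}\aNormNonCondLoc{(e_T,\epsilon_T)}\lesssim k^{1/2}\aNormNonCondLoc{(e_T,\epsilon_T)}.
\]

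\emph{Combining.} Sum over $T$ and apply Cauchy--Schwarz to $\sumT\|e_T\|_T\|\nabla e_T\|_T$. Inserting \eqref{eq: theorem non-condensed L2 error estimate} with $m=2$, namely $\|e_h\|_\Omega\lesssim(h/k)^2\|f\|_\Omega$ (the $\|f\|_{0,h}$ term is just $\|f\|_\Omega$), and the energy bound above, we obtain
\[
\sumT\|e_T\|_{\partial T}^2\lesssim\Big(\frac hk\Big)^2 k^{1/2}\frac hk\|f\|_\Omega^2+h^{-1}\Big(\frac hk\Big)^4\|f\|_\Omega^2\lesssim k^{1/2}\Big(\frac hk\Big)^3\|f\|_\Omega^2,
\]
where the last step uses $k\ge1$. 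Taking square roots and adding the first-term bound yields \eqref{eq: theorem condensed L2 error estimate}.
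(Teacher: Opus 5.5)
Your proof is correct, and it takes a more direct route than the paper. In the splitting of $\|\epsilon_T\|_{\partial T}$, the paper uses the statically condensed element component $\recLoc^k\epsilon_T$, not the non-condensed element error $e_T$.

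Its resulting three-term bound \eqref{eq: condensed L2-estimate approach} has the same structure as yours. It then needs two further ingredients:
\begin{itemize}
\item the condensed energy estimate of Theorem \ref{theorem: condensed energy error estimate};
\item a separate bound on $\|\recGlob^k\epsilon_h\|_\Omega$, obtained by writing $\recGlob^k\epsilon_h = e_h - (\recGlob^k\projLTraceGlob u + \vGlob f - \projLGlob u)$ and controlling the local condensation consistency error through \eqref{eq: V_T^k L2 error estimate}, \eqref{eq: U_T^k L2 error estimate} and \eqref{eq: u_T^H help bound}.
\end{itemize}
Those local lemmas rely on elliptic regularity on each mesh element, and hence on convexity of the elements.

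Your argument needs only Theorems \ref{theorem: non-condensed energy error estimate}--\ref{theorem: non-condensed L2-error estimate} with $m=2$ and $H^2$-regularity on $\Omega$. Your gradient bound is exactly Lemma \ref{lemma: bound grad vT in terms of energy}, so you could simply cite it.

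The paper's route buys a template phrased purely in condensed quantities: a trace error together with its static condensation. That template is reused verbatim for \eqref{eq: claimed bound 1} in the multigrid analysis. There $\mu_h - \widetilde{\mu}_h$ is not the trace part of a discrete solution compared against the interpolant of an exact solution, so your splitting would have no $e_h$ available.

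Incidentally, \eqref{eq: U_T^k definition} makes $(\recLoc^k\nu_T,\nu_T)$ orthogonal to $\hhoSpaceCell_T^k\times\{0\}$ in $\aNonCondLoc$. Hence $\recLoc^k\nu_T$ minimizes $\aNormNonCondLoc{(v_T,\nu_T)}$ over $v_T$, which gives $\aNormGlob{\epsilon_h}\le\aNormNonCondGlob{(e_h,\epsilon_h)}$. Theorem \ref{theorem: condensed energy error estimate} therefore also follows directly from the non-condensed estimate, in the same spirit as your approach.
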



\section{$p$-multigrid} \label{section: p-multigrid}

We consider a multigrid algorithm in the polynomial degree to solve the condensed problem \eqref{eq: global problem discretized condensed}.

\subsection{Algorithm}
\label{section: multigrid algorithms}

We let $1 = k_1 < \cdots < k_{\nLevel} = \polyDegMax$ denote a sequence of polynomial degrees, with each degree corresponding to a level in the algorithm. With $a_h^{k_j}$ denoting the condensed bilinear form in degree $k_j$ from Section \ref{sec: condensed HHO scheme}, we define the associated operator $A_h^{k_j}:\hhoSpaceTrace_h^{k_j}\rightarrow\hhoSpaceTrace_h^{k_j}$ such that, for all $\trialGlobTrace\in \hhoSpaceTrace_h^{k_j}$,
\begin{align}
  \label{eq: operator corresponding to general bilinear form level j}
  (A_h^{k_j} \trialGlobTrace, \testGlobTrace)_\facesUnion =  a_h^{k_j}(\trialGlobTrace, \testGlobTrace) \qquad \forall \testGlobTrace\in \hhoSpaceTrace_h^{k_j}.
\end{align}
We also denote by $\smoother : \hhoSpaceTrace_h^{k_j}\rightarrow\hhoSpaceTrace_h^{k_j}$ a \emph{smoother}, which we will assume to satisfy the minimal assumptions presented in \cite{Bramble.Pasciak.ea:91}. Notably, the smoother $\smoother$ depends on $A_h^{k_j}$. All standard choices of smoother such as the Richardson iteration and differently ordered Gauss--Seidel and Jacobi iterations are known to fulfil the assumptions \cite{Bramble.Pasciak:92}.
On each level $j=1,\ldots,\nLevel$, we want to solve a problem of the form: For some $\phi_h\in\hhoSpaceTrace_h^{k_j}$, find
$\trialGlobTrace\in\hhoSpaceTrace_h^{k_j}$ such that
\begin{equation}
  \label{eq: multigrid problem on level j}
  A_h^{k_j}\trialGlobTrace = \phi_h.
\end{equation}
The right hand-side of \eqref{eq: global problem discretized condensed} can be recast into this form by defining $\phi_h\in\hhoSpaceTrace^{k}_h$ as the unique solution to
\begin{equation}
  \label{eq: definition varphi_h}
  \left(\phi_h, \testGlobTrace \right)_{\facesUnion} = (f, \recGlob^{k}\testGlobTrace)_\Omega \qquad \forall \testGlobTrace\in\hhoSpaceTrace_h^{k}.
\end{equation}
We define a multigrid operator $B_h^{k_j} : \hhoSpaceTrace_h^{k_j}\rightarrow\hhoSpaceTrace_h^{k_j}$ which exactly inverts $A_h^{k_j}$ on the coarsest level and which otherwise is defined by induction.

\begin{algo}[Non-inherited $W$-cycle]
  \label{algorithm: general multigrid cycle}
  Let $\nSmooth$ be a positive integer.
  We define the action of $B_h^{k_j}$ on $\phi_h \in \hhoSpaceTrace_h^{k_j}$ as follows:
  \begin{enumerate}
  \item If $j=1$, set $B_h^{k_j}\phi_h = (A_h^{k_j})^{-1}\phi_h$. Otherwise, let $\lambda_h^{(0)} = 0\in \hhoSpaceTrace_h^{k_j}$.

  \item \emph{Presmooth}. For $i=1,\ldots, \nSmooth$,
    $$
    \lambda_h^{(i)} = \smoother\big(\phi_h - A_h^{k_j}\lambda_h^{(i-1)}\big) 
    $$
  \item \emph{Coarse correction}. For $i=1,2$,
    $$
    \lambda_h^{(\nSmooth+i)} = \lambda_h^{(\nSmooth + i - 1)} + B_h^{k_{j-1}}\projLTraceGlobArbitraryDeg^{k_{j-1}}\big(\phi_h - A_h^{k_j}\lambda_h^{(\nSmooth + i - 1)} \big) 
    $$
  \item \emph{Postsmooth}. For $i=1,\ldots, \nSmooth$,
    $$
    \lambda_h^{(\nSmooth + 2 + i)} = \smoother\big(\phi_h - A_h^{k_j}\lambda_h^{(\nSmooth + 2 + i - 1)}\big)
    $$

  \item Set $B_h^{k_j}\phi_h = \lambda_h^{(2\nSmooth + 2)}$.
  \end{enumerate}
\end{algo}

One multigrid iteration to solve \eqref{eq: multigrid problem on level j} at the finest level $j = \nLevel$ changes a current estimate $\trialGlobTrace^{(i)}\in \hhoSpaceTrace_h^{\polyDegMax}$ to
\begin{equation}
  \label{eq: non-inherited multigrid iterative relation}
  \trialGlobTrace^{(i + 1)} = \big(I - B_h^{\polyDegMax}A_h^{\polyDegMax}\big)\trialGlobTrace^{(i)} + B_h^{\polyDegMax}\phi_h.
\end{equation}
We use $\trialGlobTrace^{(0)} = 0$ for the initial iteration.

\subsection{Main result}
\label{section: multigrid main results}

\begin{thm}[Convergence bound $W$-cycle]
  \label{theorem: convergence bound non-inherited W-cycle}
  For all $j=2,\ldots, \nLevel$, set $\sigma_j \coloneqq \frac{k_j}{k_{j-1}}$.
  Then, assuming that $\nSmooth$ is large enough, the operator $I - B_h^{\polyDegMax}A_h^{\polyDegMax}$ of \eqref{eq: non-inherited multigrid iterative relation} has spectral radius $\rho$ bounded by
  \begin{equation}\label{eq p-mg convergence rate basic}
    \rho(I - B_h^{\polyDegMax} A_h^{\polyDegMax})\le 1 - \frac{\nSmooth}{C\max_j(\sigma_j^{\frac{5}{4}}k_j^{\frac{7}{4}}) + \nSmooth},
  \end{equation}
  where $C>0$ is some constant independent of $h, \nLevel, \nSmooth$ and the $k_j$. In particular, if $k_j \lesssim k_{j-1}$ for all $j=2,\ldots,\nLevel$, then
  \begin{equation}
  	\label{eq: p-mg convergence rate under bounded polynomial coarsening}
  	  \rho(I - B_h^{\polyDegMax} A_h^{\polyDegMax})\le 1 - \frac{\nSmooth}{C\polyDegMax^{\frac{7}{4}} + \nSmooth},
  \end{equation}
\end{thm}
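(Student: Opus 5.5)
We will work inside the abstract framework of \cite{Bramble.Pasciak.ea:91} for non-inherited (non-nested) multigrid. For the $W$-cycle, this framework reduces the convergence bound to three ingredients, verified on each level $j=2,\ldots,\nLevel$ with $\mDeg$ the fine and $\nDeg$ the coarse degree:
\begin{enumerate}
\item[(i)] a stability property of the intergrid transfer $P^{\nDeg}\coloneqq\projLTraceGlobN$, namely $\aNormGlobNoDeg{\nDeg}{P^{\nDeg}\mu_h}\lesssim\aNormGlobM{\mu_h}$ for $\mu_h\in\hhoSpaceTrace_h^{\mDeg}$;
\item[(ii)] the standard smoother hypothesis of \cite{Bramble.Pasciak:92}, which involves the largest eigenvalue $\lambda_{k_j}$ of $A_h^{\mDeg}$;
\item[(iii)] a regularity--approximation property of the form
\[
\big|a_h^{\mDeg}\big((I-\projMGJMinus\!\!_{*}\ldots)\mu_h,\mu_h\big)\big|\le\Big(\frac{\alpha_{k_j}\|A_h^{\mDeg}\mu_h\|_{\facesUnion}^2}{\lambda_{k_j}}\Big)^{1/2}\aNormGlobM{\mu_h}^{1/2}\cdots,
\]
that is, $\alpha_{k_j}$ measures how well the coarse Galerkin-type correction approximates the fine one. (The displayed inequality is only schematic; its precise form is the one in \cite{Bramble.Pasciak.ea:91}.)
\end{enumerate}
Once these hold with $\alpha_{k_j}\lambda_{k_j}\lesssim\sigma_j^{5/4}k_j^{7/4}$, the $W$-cycle theorem of \cite{Bramble.Pasciak.ea:91} gives \eqref{eq p-mg convergence rate basic} directly, for $\nSmooth$ large enough. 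The second claim \eqref{eq: p-mg convergence rate under bounded polynomial coarsening} then follows, since $\sigma_j\lesssim1$ and $k_j\le k$.

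\textbf{Eigenvalue bound.} We first show $\lambda_{k_j}\lesssim k_j^2/h$. Since $\aGlob$ is defined via the minimizing extension $\recGlob^k$, we have $\aNormGlob{\mu_h}^2\le\aNormNonCondGlob{(v_h,\mu_h)}^2$ for any cell component $v_h$. We choose $v_h=0$ locally. Then $\gradHor(0,\mu_T)$ is controlled by the discrete trace inequality \eqref{eq: discrete inverse trace inequality}: testing with $\genericVec$ gives $\|\gradHor(0,\mu_T)\|_T\lesssim k h_T^{-1/2}\|\mu_T\|_{\partial T}$. The stabilization \eqref{eq: standard HHO stabilization} carries weight $k/h_T$ and involves $\delta$-terms that are bounded by $\|\mu_T\|_{\partial T}$ and by traces of $\hor$, again through \eqref{eq: discrete inverse trace inequality}. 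Hence $\aNormGlob{\mu_h}^2\lesssim (k^2/h)\|\mu_h\|^2_{\facesUnion}$, using quasi-uniformity \eqref{eq: mesh quasi uniformity assumption}.

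\textbf{Stability of $P^{\nDeg}$.} We show $P^{\nDeg}$ is bounded in the condensed energy norms with constant independent of $k$, possibly up to a factor $\sigma_j^{\cdot}$. The plan is to compare $\aNormGlobNoDeg{\nDeg}{P^{\nDeg}\mu_h}$ with $\aNormNonCondGlobM{(\recGlob^{\mDeg}\mu_h,\mu_h)}$, using the cell candidate $\Pi_{\mesh}^{\nDeg}\recGlob^{\mDeg}\mu_h$ together with the commutation \eqref{eq: gradHor commutation with interpolation}-type identities at the discrete level and the property \eqref{eq: stabilization: trace - cell dof bound}. This is where powers of $\sigma_j$ enter.

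\textbf{Approximation property (main obstacle).} This is where $\alpha_{k_j}\sim h\,\sigma_j^{5/4}/k_j^{1/4}$ must come from. The intended argument is a duality step. Given $\mu_h$, define $g$ through $A_h^{\mDeg}\mu_h$ lifted to a cell source: set $f\coloneqq$ the $L^2(\Omega)$ function whose condensed right-hand side \eqref{eq: definition varphi_h} reproduces $A_h^{\mDeg}\mu_h$, with $\|f\|_\Omega$ controlled by $\|A_h^{\mDeg}\mu_h\|_{\facesUnion}$ times a $k$-dependent factor. Then $\mu_h$ and the coarse solution $\mu_h^{\nDeg}$ are both HHO approximations of the same continuous problem. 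Theorem \ref{theorem: condensed energy error estimate} and Theorem \ref{theorem: condensed L2-error estimate}, applied at both degrees, give
\[
\|\mu_h-\mu_h^{\nDeg}\|_{\facesUnion}\lesssim k_{j-1}^{1/4}\Big(\frac{h}{k_{j-1}}\Big)^{3/2}\|f\|_\Omega,
\]
via the triangle inequality through $\projLTraceGlobM u$ and $\projLTraceGlobN u$, together with the approximation property \eqref{eq: approximation L2-projection on trace} of the projections.

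The delicate points are twofold. First, controlling $\|f\|_\Omega$ by $\|A_h^{\mDeg}\mu_h\|_{\facesUnion}$: I would try to construct $f$ as a cell polynomial supported near faces, using \eqref{eq: discrete inverse trace inequality}, which costs a factor $(k/h)^{1/2}$. Second, balancing exponents so that the product $\alpha\lambda$ yields exactly $\sigma_j^{5/4}k_j^{7/4}$. I expect the elliptic regularity \eqref{eq: elliptic regularity} and the $k^{1/4}$ loss in Theorem \ref{theorem: condensed L2-error estimate} to be the bottleneck, and this is what fixes the exponent $\tfrac74$.
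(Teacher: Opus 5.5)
\textbf{There is a genuine gap: the approximation estimate, the central step of the proof, rests on a construction that fails.}

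Your reduction is the same as the paper's. Theorem~\ref{theorem: convergence bound non-inherited W-cycle} follows from \cite[Theorem~7]{Bramble.Pasciak.ea:91} once \eqref{eq: reg and approx crit} holds with $C\simeq\max_j(\sigma_j^{5/4}k_j^{7/4})$. This splits into $\lambda^{k_j}\lesssim k_j^2/h$ and $|a_h^{k_j}(\mu_h-P^{k_{j-1}}\mu_h,\mu_h)|\le\|\mu_h-P^{k_{j-1}}\mu_h\|_{\Gamma_h}\|A_h^{k_j}\mu_h\|_{\Gamma_h}$ with $\|\mu_h-P^{k_{j-1}}\mu_h\|_{\Gamma_h}\lesssim\sigma_j^{5/4}h\,k_j^{-1/4}\|A_h^{k_j}\mu_h\|_{\Gamma_h}$. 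Two smaller corrections:
\begin{itemize}
\item $P^{k_{j-1}}$ is the operator of \eqref{eq: P_kj for multigrid definition}, not the $L^2$-projection, which only acts as restriction in Algorithm~\ref{algorithm: general multigrid cycle}. Your ingredient (i) is not used by the paper, which needs only \eqref{eq: reg and approx crit} and the smoother hypotheses; you also give no proof of it.
\item In the eigenvalue bound, your minimization remark is correct and slightly cleaner than the paper's triangle inequality. However, the stabilization term cannot be handled by \eqref{eq: discrete inverse trace inequality} alone without losing powers of $k$. You need \eqref{eq: approximation L2-projection on trace FIRST ORDER} applied to $p-\Pi_T^{k_j}p$, with $p=p_T^{k_j+1}(0,\mu_T)$, together with \eqref{eq: L2-norm hor <= L2-norm gradHor}.
\end{itemize}

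\textbf{Exact representation need not exist.} An $f\in L^2(\Omega)$ with $(f,U_h^{k_j}\nu_h)_\Omega=(A_h^{k_j}\mu_h,\nu_h)_{\Gamma_h}$ for all $\nu_h$ can fail to exist. If $0\ne\mu_h\in\ker U_h^{k_j}$, then $(f,U_h^{k_j}\mu_h)_\Omega=0<a_h^{k_j}(\mu_h,\mu_h)$. Such kernels occur: for $k_j=2$ on fine tetrahedral meshes, $\dim\mathcal M_h^{2}\approx 12\,\mathrm{card}(\mathcal T_h)>10\,\mathrm{card}(\mathcal T_h)=\dim\IP^2(\mathcal T_h)$.

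\textbf{The coarse object is not a discretization of the same problem.} This is the decisive point. Even granting $f$, the criterion involves $P^{k_{j-1}}\mu_h$, which solves $a_h^{k_{j-1}}(P^{k_{j-1}}\mu_h,\nu_h)=(f,U_h^{k_j}\nu_h)_\Omega$. The degree-$k_{j-1}$ HHO solution with source $f$ is instead tested against $U_h^{k_{j-1}}\nu_h\neq U_h^{k_j}\nu_h$. So $\mu_h$ and $P^{k_{j-1}}\mu_h$ do not discretize a common problem, and Theorems~\ref{theorem: condensed energy error estimate}--\ref{theorem: condensed L2-error estimate} cannot be applied to them.

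\textbf{The missing idea.} The paper uses the Raviart--Thomas lifting of Lemma~\ref{lemma: Raviart--Thomas element FMuLoc}. The source $g\coloneqq\nabla_h\cdot\boldsymbol F_h^{k_j+1}(\mu_h)$ satisfies $(g,v_T)_T=\frac12(A_h^{k_j}\mu_h,v_T)_{\partial T}$ for all $v_T\in\IP^{k_j}(T)$. This gives two things:
\begin{itemize}
\item $g$ reproduces $A_h^{k_j}\mu_h$ only up to the defect $\frac12\sum_T(A_h^{k_j}\mu_h,\nu_T-U_T^{k_j}\nu_T)_{\partial T}$, which \eqref{eq: stabilization: trace - cell dof bound} controls.
\item Via $\Pi_{\mathcal F_h}^{k_{j-1}}A_h^{k_j}\mu_h=A_h^{k_{j-1}}P^{k_{j-1}}\mu_h$, the moments of $g$ up to degree $k_{j-1}$ coincide with those of the coarse lifting of $P^{k_{j-1}}\mu_h$.
\end{itemize}
Consequently the auxiliary $\widetilde\mu_h$ and $\widetilde{P^{k_{j-1}}\mu_h}$ are genuine HHO approximations, of degrees $k_j$ and $k_{j-1}$, of the same problem with source $g$. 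The differences $\mu_h-\widetilde\mu_h$ and $P^{k_{j-1}}\mu_h-\widetilde{P^{k_{j-1}}\mu_h}$ then need separate duality arguments in $L^2(\Gamma_h)$; these produce the $k_j^{-5/8}$ terms.

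\textbf{Exponent bookkeeping.} The bound \eqref{eq: discrete inverse trace inequality} dictates $\|g\|_\Omega\lesssim k_jh^{-1/2}\|A_h^{k_j}\mu_h\|_{\Gamma_h}$, not your $(k/h)^{1/2}$. With your factor, the same bookkeeping would give $\sigma_j^{5/4}k_j^{5/4}$. The exponent $\frac74$ in your sketch is therefore fitted rather than derived.
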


\begin{rem}[Preconditioning]
  The framework by Bramble, Pasciak and Xu \cite{Bramble.Pasciak.ea:91}, which we use to prove the above theorem, additionally provides bounds for the variable $V$-cycle as a preconditioner.
\end{rem}

\begin{rem}[Relation with introductory statement \eqref{eq: HHO convergence result}]
  The contraction factor $\varrho(\nSmooth)$ from the introduction corresponds to $\rho(I - B_h^{\polyDegMax} A_h^{\polyDegMax})$.
  To obtain equation \ref{eq: HHO convergence result}, which is convenient for comparison with \eqref{eq: DG convergence result}, we use \eqref{eq: p-mg convergence rate under bounded polynomial coarsening} and note that
  \[
  1 - \frac{\nSmooth}{Ck^{\frac{7}{4}} + \nSmooth } = \frac{Ck^{\frac{7}{4}}}{Ck^{\frac{7}{4}} + \nSmooth } \le \frac{Ck^{\frac{7}{4}}}{1 + \nSmooth},
  \]
  where in the last step we assume $Ck^{\frac{7}{4}} \ge 1$.
\end{rem}

  \begin{rem}[Design of $p$-multigrid solvers]
    As we will detail in the following numerical section, finely tracking the dependency on $\sigma_j$ in \eqref{eq p-mg convergence rate basic} suggests the design of $p$-multigrid solvers different from the ones usually found in the literature.
\end{rem}

\subsection{Numerical results}

We take the unit square $\Omega = (0, 1)^2$ and we consider the continuous problem \eqref{eq: continuous Poisson problem} with right-hand side such that the solution is $u(x,y) = \sin(\pi x)\sin(\pi y)$.
We consider its HHO solution on the family of triangular meshes shown in Figure \ref{figure: triangular meshes}.

Let $e_i = \trialGlobTrace - \trialGlobTrace^{(i)}$, $i\ge 0$, denote the error between the discrete solution $A_h^k\trialGlobTrace = \phi_h$ from equation \eqref{eq: multigrid problem on level j} and the $i$-th multigrid iteration $\trialGlobTrace^{(i)}$. It is well known that, in any norm $\|\cdot\|$, the error has an asymptotic reduction rate
$$
\lim_{i\rightarrow\infty}\frac{\|e_{i+1}\|}{\|e_i\|} \le \rho(I - B_h^{\polyDegMax} A_h^{\polyDegMax}),
$$
with equality unless $\trialGlobTrace$ is orthogonal to the eigenspace of the largest eigenvalue of $I - B_h^{\polyDegMax} A_h^{\polyDegMax}$.
Let $\|\testGlobTrace\|_{A_h^k} \coloneqq \|A_h^k\testGlobTrace\|_\facesUnion$ for any $\testGlobTrace\in \hhoSpaceTrace_h^k$.
Then, $\|e_i\|_{A_h^k} = \|\phi_h - A_h^k\trialGlobTrace^{(i)}\|_\facesUnion$ and so the error is measured by the residual error. This is a natural choice since $\trialGlobTrace$ is unknown in general. We estimate the spectral radius by
$$
\rho^{\text{exp}} \coloneqq \frac{\|e_{n+1}\|_{A_h^k}}{\|e_n\|_{A_h^k}},
$$
where $n$ is the last iteration of the numerical experiment.

In the multigrid algorithm \ref{algorithm: general multigrid cycle}, we apply a Richardson smoother $\smoother = \frac{1}{\rho(A_h^{k_j})} I$ at each level $j\ge2$, and we select $\nSmooth$ as small as possible to still yield convergence for all of the tests considered.

Table \ref{table: tri-mesh comparison geometric polynomial coarsening} studies the convergence rate under $hp$-refinement for $\sigma_j = 2$ on all levels $j=2,\ldots, \nLevel$. Indeed, we see that the convergence rate as measured by both the number of iterations and $\rho^{\text{exp}}$ is independent of $h$ and deteriorates with increasing $k$.

Common designs for the polynomial degrees in $p$-multigrid algorithms use arithmetic- or geometric coarsening, for example $k_j = 9, 7, 5, 3, 1$ or $k_j = 8, 4, 2, 1$. However, Theorem \ref{theorem: convergence bound non-inherited W-cycle} suggests that to optimize the convergence rate, one should select the $k_j$ so that $\max_j(\sigma_j^\frac{5}{4} k_j^\frac{7}{4})$ is minimized.
This motivates instead a design of the type $k_j = 8, 7, 4, 1$ or $k_j = 9, 8, 5, 1$.
Table \ref{table: tri-mesh comparison convergence rate of p-design} compares the convergence rates of a few different algorithmic designs.
Indeed, the two coarsening choices $k_j = 8, 7, 6, 5, 4, 3, 2, 1$ and $k_j = 8, 7, 4, 1$ yield the same measured convergence rate $\rho^{\text{exp}}$.
We furthermore see, in accordance with the theory, that the more common choice $k_j = 8, 4, 2, 1$ yields a far worse convergence rate than obtained for the first two choices of the $k_j$. To the best of our knowledge, $p$-multigrid designs to minimize an expression of the form $\max_j(\sigma_j^x k_j^y)$, $x,y>0$, have never been suggested before. Of course, in the design of an efficient solver, the amount of work that goes into each iteration must also be taken into consideration.

\begin{figure}[ht]
  \centering

  \begin{subfigure}{0.245\textwidth}
    \includegraphics[width=\linewidth]{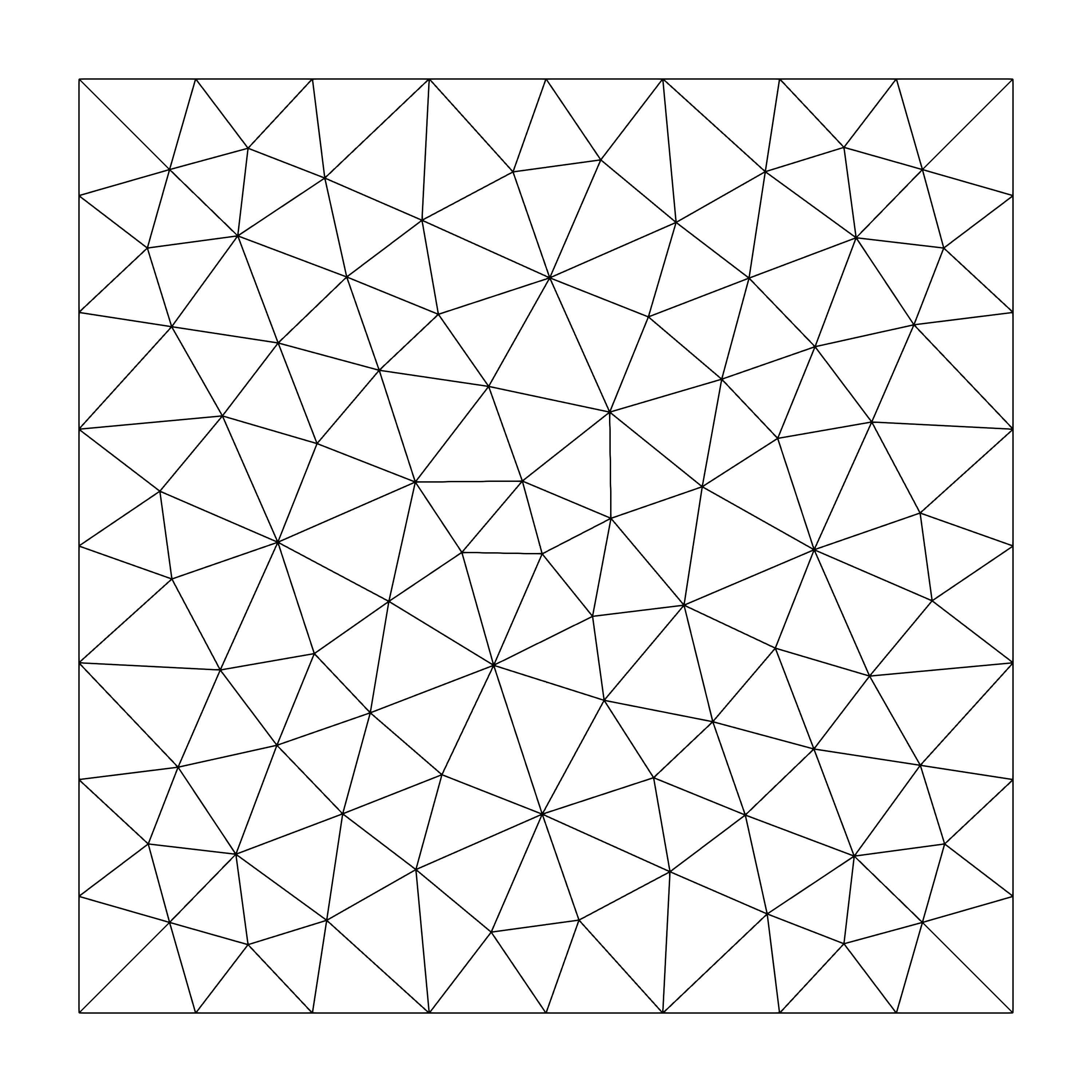}
    \caption{$h=1.68e-1$}
  \end{subfigure}
  \begin{subfigure}{0.245\textwidth}
    \includegraphics[width=\linewidth]{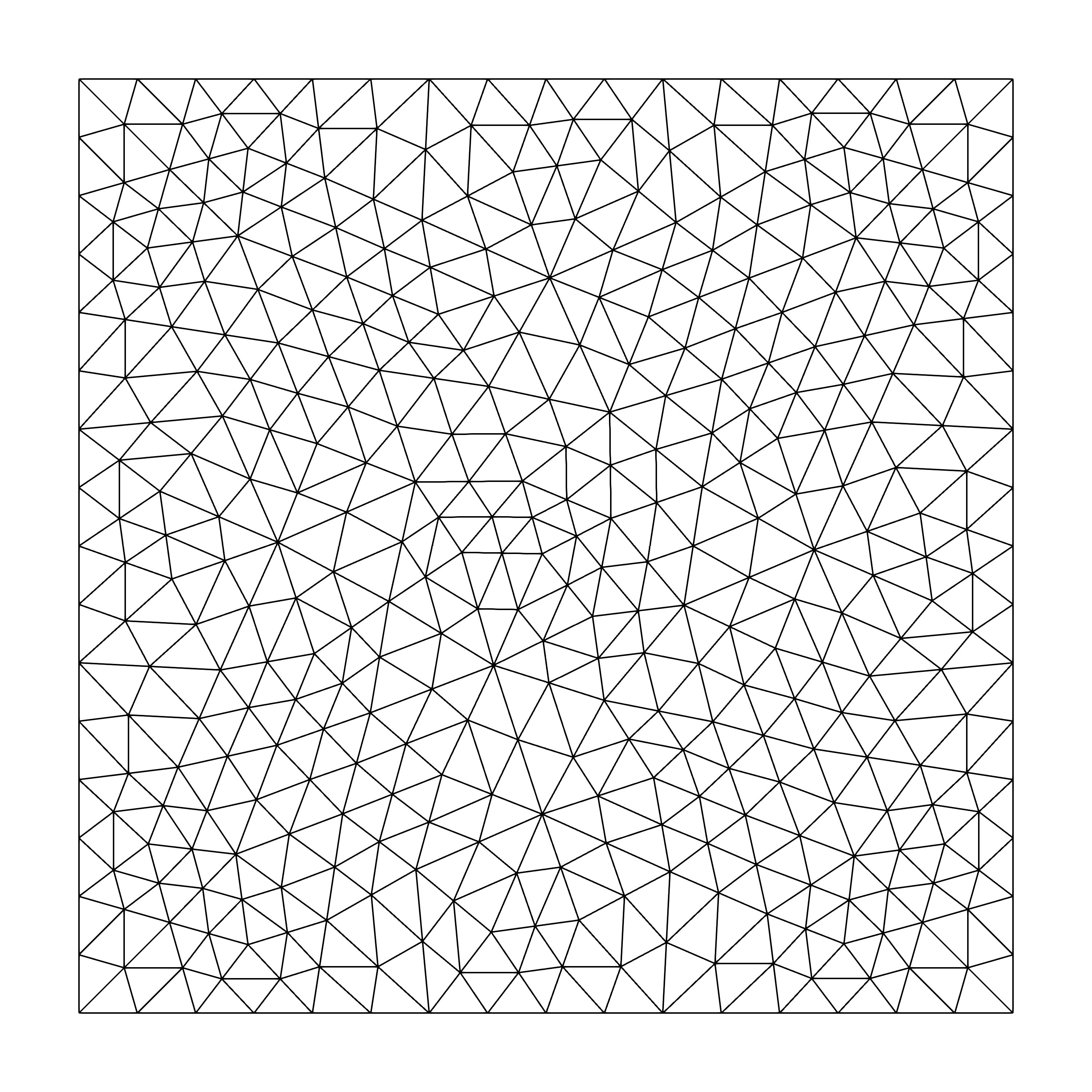}
    \caption{$h=8.38e-2$}
  \end{subfigure}
  \begin{subfigure}{0.245\textwidth}
    \includegraphics[width=\linewidth]{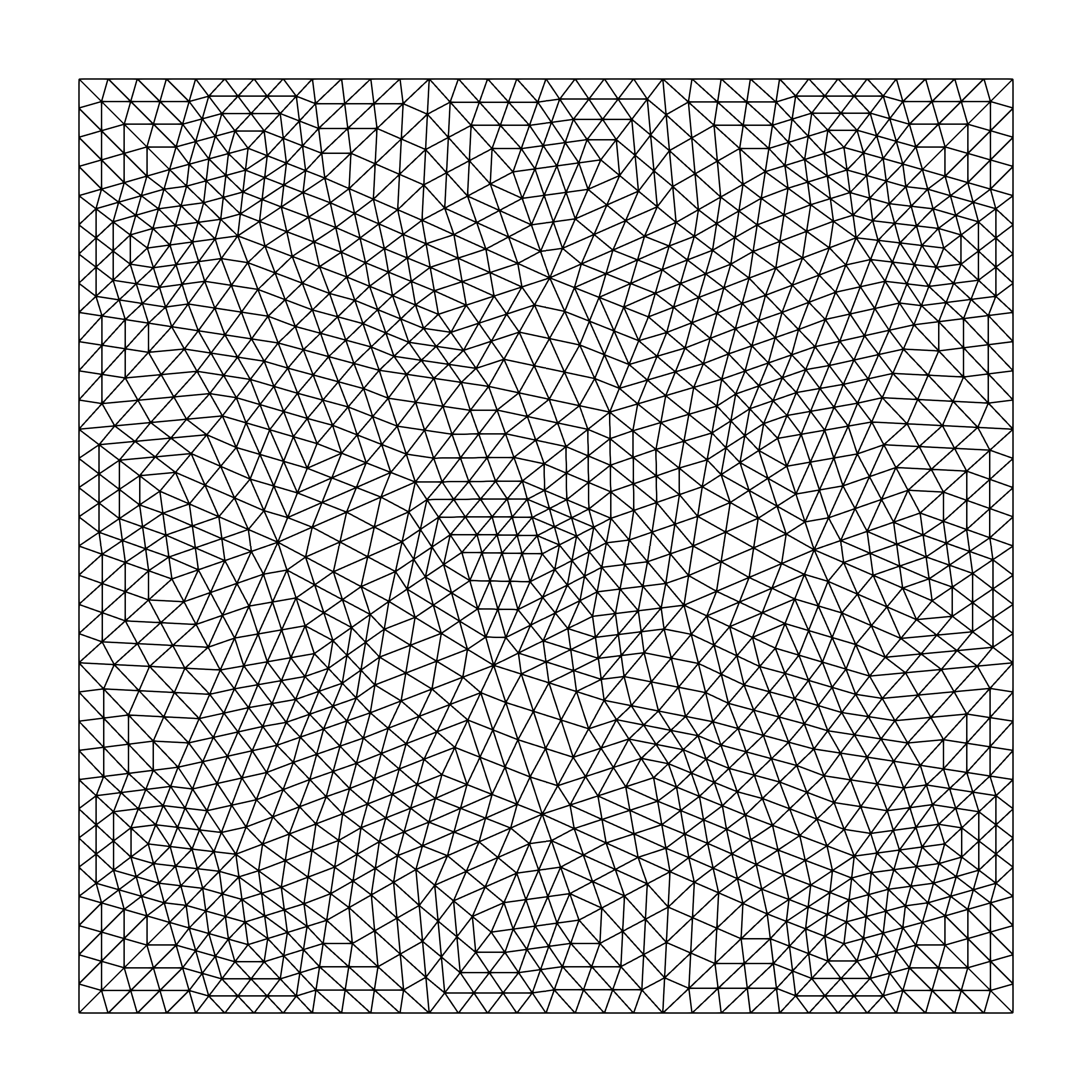}
    \caption{$h=4.19e-2$}
  \end{subfigure}
  \begin{subfigure}{0.245\textwidth}
    \includegraphics[width=\linewidth]{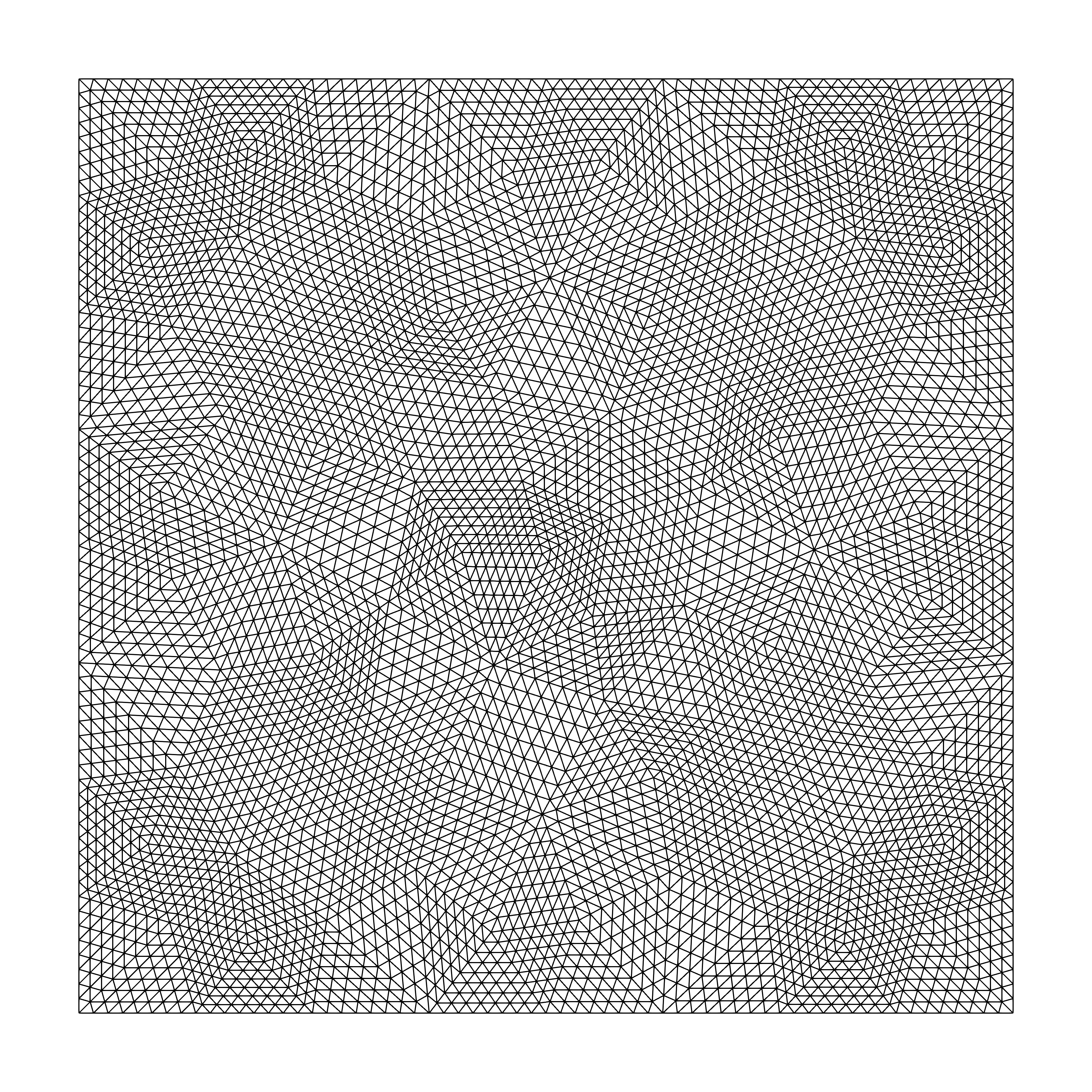}
    \caption{$h=2.10e-2$}
  \end{subfigure}
  \caption{Triangular meshes of similar quality (i) - (iv)}
  \label{figure: triangular meshes}
\end{figure}

\begin{table}[htb!]
  \centering
  \begin{tabular}{| c || c | c | c | c |}
    \hline
    No.\@ it / $\rho^{\text{exp}}$ & tri-mesh (i) &  tri-mesh (ii) &  tri-mesh (iii) &  tri-mesh (iv)\\
    \hline
    \hline
    $k_j = 2, 1$ & 12 / 0.16 & 12 / 0.17 & 12 / 0.18 & 12 / 0.18\\
    \hline
    $k_j = 4, 2, 1$ & 14 / 0.25 & 14 / 0.27 & 14 / 0.27 & 13 / 0.27\\
    \hline
    $k_j = 8, 4, 2, 1$ & 23 / 0.47 & 26 / 0.51 & 26 / 0.51 & 26 / 0.50\\
    \hline
  \end{tabular}
  \caption{We state the number of iterations until the normalized residual error has $\frac{\|e_i\|_{A_h^k}}{\|\trialGlobTrace\|_{A_h^k}} <1e-10$ as well as the observed $\rho^{\text{exp}}$. The multigrid is the symmetric $W$-cycle with $\nSmooth = 8$ from Algorithm \ref{algorithm: general multigrid cycle} which applies the Richardson iterator $\smoother = \frac{1}{\rho(A_h^{k_j})} I$ as smoother.}
  \label{table: tri-mesh comparison geometric polynomial coarsening}
\end{table}

\begin{table}[htb!]
  \centering
  \begin{tabular}{| c || c | c | c | c |}
    \hline
    No.\@ it / $\rho^{\text{exp}}$ & tri-mesh (i) &  tri-mesh (ii) &  tri-mesh (iii) &  tri-mesh (iv)\\
    \hline
    \hline
    $k_j = 8, 7, 6, 5, 4, 3, 2, 1$ & 10 / 0.22 & 10 / 0.25 & 11 / 0.25 & 11 / 0.26\\
    \hline
    $k_j = 8, 7, 4, 1$ & 11 / 0.22 & 12 / 0.24 & 12 / 0.25 & 12 / 0.25\\
    \hline
    $k_j = 8, 4, 2, 1$ & 20 / 0.47 & 22 / 0.51 & 22 / 0.50 & 22 / 0.49\\
    \hline
  \end{tabular}
  \caption{We state the number of iterations until the normalized residual error has $\frac{\|e_i\|_{A_h^k}}{\|\trialGlobTrace\|_{A_h^k}} <1e-10$. The multigrid is the symmetric $W$-cycle with $\nSmooth = 8$ from Algorithm \ref{algorithm: general multigrid cycle} which applies the Richardson iterator $\smoother = \frac{1}{\rho(A_h^{k_j})} I$ as smoother.}
  \label{table: tri-mesh comparison convergence rate of p-design}
\end{table}


\section{Optimal $hp$-error estimates} \label{section: hp error estimates (analysis)}

\subsection{Preliminary lemmas}

\begin{lem}[Properties of the stabilization]\label{lem: stabilization consistency}
  The stabilization in \eqref{eq: standard HHO stabilization} satisfies properties \eqref{eq: stabilization: trace - cell dof bound} and \eqref{eq: stabilization consistency}.
\end{lem}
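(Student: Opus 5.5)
We treat the two properties separately, beginning with the consistency property \eqref{eq: stabilization consistency}, which is the more routine one.

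\textbf{Proof of \eqref{eq: stabilization consistency}.} Let $w\in H^m(T)$ and $(v_T,\nu_T)=\interpolLoc{w}$. By \eqref{eq: hor commutation with interpolation}, $\hor\interpolLoc w=\projEllipticLocPlus w=:\check w$. Since $\projLLoc$ and $\projLTraceLoc$ are projections, we have
\[
\delta_{\partial T}^k\testLoc-\delta_T^k\testLoc=\projLTraceLoc(w-\check w)-\projLLoc(w-\check w)
\qquad\text{on }\partial T.
\]
Write $\eta:=w-\check w$. Using \eqref{eq: stability of L2-projection on local traces}, we bound
\[
\|\delta_{\partial T}^k\testLoc-\delta_T^k\testLoc\|_{\partial T}\le \|\eta\|_{\partial T}+\|\projLLoc\eta\|_{\partial T}\le 2\|\eta\|_{\partial T}+\|\eta-\projLLoc\eta\|_{\partial T}.
\]
We then apply \eqref{eq: approximation L2-projection on trace FIRST ORDER} to $\eta$, which gives $\|\eta-\projLLoc\eta\|_{\partial T}\lesssim (h_T/k)^{1/2}\|\nabla\eta\|_T$. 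For $\|\eta\|_{\partial T}$ we use \eqref{eq: multiplicative trace inequality}. The zero mean of $\eta$ and Poincaré give $\|\eta\|_T\lesssim h_T\|\nabla \eta\|_T$, which loses $k$-powers. So instead we compare with $P^{k+1}_Tw$: write $\eta=(w-P_T^{k+1}w)-\projEllipticLocPlus(w-P_T^{k+1}w)$ and use \eqref{eq: fundamental approximation result on trace} for the first part. The second part is a polynomial $\psi$ of degree $k+1$ with $\|\nabla\psi\|_T\lesssim (h_T/k)^{m-1}\|w\|_{m,T}$ and zero mean. Here the difficulty is to bound $\|\psi\|_{\partial T}$ by $(h_T/k)^{1/2}\|\nabla\psi\|_T$. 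We do this through $\|\psi\|_{\partial T}\le \|\psi-\projLLoc\psi\|_{\partial T}+\|\projLLoc\psi\|_{\partial T}$. The first term is controlled by \eqref{eq: approximation L2-projection on trace FIRST ORDER}. For $\projLLoc\psi$ we would rather replace $\check w$ by an $H^1$-stable quasi-optimal choice. A simpler route is to notice that \eqref{eq: approximation L2-projection on trace FIRST ORDER} applied to $w$ and to $\check w$ at degree $k$ directly yields
\[
\|\projLTraceLoc\eta-\projLLoc\eta\|_{\partial T}\le\|\eta-\projLLoc\eta\|_{\partial T}\lesssim (h_T/k)^{1/2}\|\nabla\eta\|_T,
\]
because $\projLTraceLoc$ is the best approximation on $\partial T$ of $\eta$ within $\IP^k(\facesLoc)\ni(\projLLoc\eta)|_{\partial T}$. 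Indeed $\projLTraceLoc\eta-\projLLoc\eta=\projLTraceLoc(\eta-\projLLoc\eta)$, which is contracted by \eqref{eq: stability of L2-projection on local traces}. This is the key trick, and it removes the trace term entirely. Combining it with \eqref{eq: approximation elliptic-projection in first Sobolev norm on cell} (degree $k+1$) and multiplying by the weight $(k/h_T)^{1/2}$ gives $|\interpolLoc w|_{\sNonCondLoc}\lesssim (h_T/k)^{m-1}\|w\|_{m,T}$.

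\textbf{Proof of \eqref{eq: stabilization: trace - cell dof bound}.} Let $p=\hor\testLoc$. We split
\[
\nu_T-v_T=(\delta^k_{\partial T}\testLoc-\delta^k_T\testLoc)+(\projLTraceLoc p-\projLLoc p)|_{\partial T}.
\]
The first term is bounded by $(h_T/k)^{1/2}\sNormNonCondLoc{\testLoc}$ by definition of the weighted stabilization. For the second term, the same identity as above gives
\[
\|\projLTraceLoc p-\projLLoc p\|_{\partial T}\le \|p-\projLLoc p\|_{\partial T}\lesssim (h_T/k)^{1/2}\|\nabla p\|_T\le (h_T/k)^{1/2}\|\gradHor\testLoc\|_T,
\]
using \eqref{eq: approximation L2-projection on trace FIRST ORDER} and \eqref{eq: L2-norm hor <= L2-norm gradHor}. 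Summing the two bounds yields \eqref{eq: stabilization: trace - cell dof bound}.

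The main obstacle was expected to be the trace bounds with optimal $k$-dependence, since the inverse trace inequality \eqref{eq: discrete inverse trace inequality} would lose a factor $k^{1/2}$. The identity $\projLTraceLoc(\cdot)-\projLLoc(\cdot)=\projLTraceLoc(I-\projLLoc)(\cdot)$ on $\partial T$, combined with the assumed trace estimate \eqref{eq: approximation L2-projection on trace FIRST ORDER}, is what circumvents it. What still needs checking is that this identity is legitimate, namely that the restriction of $\IP^k(T)$ to each face lies in $\IP^k(\facesLoc)$, so that $\projLTraceLoc$ acts as the identity on $\projLLoc\eta|_{\partial T}$.
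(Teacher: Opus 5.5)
Your proof is correct and, once the abandoned detours in the consistency part (the Poincar\'e argument and the comparison with $P_T^{k+1}w$) are discarded, it coincides with the paper's proof. Both properties rest on the identity $\projLTraceLoc\eta-\projLLoc\eta=\projLTraceLoc(\eta-\projLLoc\eta)$ on $\partial T$, which holds because faces are planar so that traces of $\IP^k(T)$ lie in $\IP^k(\facesLoc)$; this is combined with the contraction \eqref{eq: stability of L2-projection on local traces} and the first-order trace estimate \eqref{eq: approximation L2-projection on trace FIRST ORDER}, followed by \eqref{eq: L2-norm hor <= L2-norm gradHor} for \eqref{eq: stabilization: trace - cell dof bound} and by \eqref{eq: approximation elliptic-projection in first Sobolev norm on cell} at degree $k+1$ for \eqref{eq: stabilization consistency}.
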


\begin{proof}
  \emph{Proof of \eqref{eq: stabilization: trace - cell dof bound}.}
  Inserting $\pm \big(\projLLoc\hor\testLoc - \projLTraceLoc\hor\testLoc\big)$ into the norm and using a triangle inequality, we get
  \begin{align*}
    \|\testLocTrace - \testLocCell\|_{\partial T}
    &\le \big\|\testLocTrace - \projLTraceLoc\hor\testLoc - \testLocCell + \projLLoc\hor\testLoc\big\|_{\partial T}
    \\
    &\quad
    + \big\|\projLTraceLoc\hor\testLoc - \projLLoc\hor\testLoc\big\|_{\partial T}\\
    \overset{\eqref{eq:difference operators},\eqref{eq: standard HHO stabilization},\eqref{eq:non condensed norms}}&=
    \errFacLoc^{\frac{1}{2}}\sNormNonCondLoc{\testLoc}
    + \big\|\projLTraceLoc\big(\hor\testLoc - \projLLoc\hor\testLoc\big)\big\|_{\partial T}\\
    \overset{\eqref{eq: stability of L2-projection on local traces}}
    &\le \errFacLoc^{\frac{1}{2}}\sNormNonCondLoc{\testLoc} + \big\|\hor\testLoc - \projLLoc\hor\testLoc\big\|_{\partial T}\\
    \overset{\eqref{eq: approximation L2-projection on trace FIRST ORDER}}
    &\lesssim \errFacLoc^{\frac{1}{2}}\Big(\sNormNonCondLoc{\testLoc} + \|\nabla\hor\testLoc\|_T\Big)\\
    \overset{\eqref{eq: L2-norm hor <= L2-norm gradHor}}
    &\le \errFacLoc^{\frac{1}{2}}\Big(\sNormNonCondLoc{\testLoc} + \big\|\gradHor\testLoc\big\|_T\Big),
  \end{align*}
  where we have additionally used the fact that the trace of $\projLLoc\hor\testLoc$ on $\partial T$ is in $\IP^k(\partial T) \subset \IP^k(\facesLoc)$ to write $\projLLoc\hor\testLoc = \projLTraceLoc \projLLoc\hor\testLoc$ in the second step.
  \smallskip\\
  \noindent\emph{Proof of \eqref{eq: stabilization consistency}.}
  Recalling the definition \eqref{eq:non condensed norms} of $\sNormNonCondLoc{\cdot}$,
  expanding first $\sNonCondLoc$ and then the difference operators according to their respective definitions \eqref{eq: standard HHO stabilization} and \eqref{eq:difference operators}, and additionally using $\hor\interpolLoc w \overset{\eqref{eq: hor commutation with interpolation}} = \projEllipticLocPlus w$, we get
  \begin{align*}
    \sNormNonCondLoc{\interpolLoc w}
    &= \errFacInvLoc^{\frac{1}{2}} \big\|\projLTraceLoc\big(w - \projEllipticLocPlus w\big) - \projLLoc\big(w - \projEllipticLocPlus w\big)\big\|_{\partial T}\\
    &= \errFacInvLoc^{\frac{1}{2}}\big\|\projLTraceLoc\big( w - \projEllipticLocPlus - \projLLoc\big(w - \projEllipticLocPlus w\big)\big)\big\|_{\partial T}\\
    \overset{\eqref{eq: stability of L2-projection on local traces}}&\le \errFacInvLoc^{\frac{1}{2}}\big\|w - \projEllipticLocPlus w - \projLLoc(w - \projEllipticLocPlus w)\big\|_{\partial T}\\
    \overset{\eqref{eq: approximation L2-projection on trace FIRST ORDER}}&\lesssim \left\|\nabla w - \nabla\projEllipticLocPlus w\right\|_{T}
    \overset{\eqref{eq: approximation elliptic-projection in first Sobolev norm on cell}}\lesssim \errFacLoc^{m-1}\left\|w\right\|_{m, T}.
    \qedhere
  \end{align*}
\end{proof}

\begin{lem}[Continuity of the normal trace component]
  For any $v\in H^2(\Omega)$ and any $\testGlobTrace\in \hhoSpaceTrace_h^k$, it holds that
  \begin{equation}
    \label{eq: cancellation via normal trace component}
    \sumT (\testLocTrace, \nabla v\cdot\normalT)_{\partial T} = 0.
  \end{equation}
\end{lem}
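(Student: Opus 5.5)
The plan is to regroup the element-wise sum as a sum over mesh faces and to use that $\testGlobTrace$ is single-valued on the skeleton and vanishes on boundary faces. First, split each $\partial T$ into its faces $F\in\facesLoc$ and write
\[
\sumT (\testLocTrace, \nabla v\cdot\normalT)_{\partial T} = \sumT\sum_{F\in\facesLoc} (\testGlobTrace|_F, \nabla v|_T\cdot\normalT|_F)_F ,
\]
then exchange the order of summation to obtain $\sum_{F\in\faces}\sum_{T : F\in\facesLoc}(\testGlobTrace|_F, \nabla v|_T\cdot \normalT|_F)_F$. Here we use that $\testLocTrace$ is by definition the restriction of the global $\testGlobTrace$ to $\partial T$, so the trace unknown seen from either side of a face coincides.

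Next, treat the faces in two cases. For a boundary face $F\subset\partial\Omega$, the definition \eqref{eq:global hho spaces} of $\hhoSpaceTrace_h^k$ gives $\testGlobTrace|_F=0$, so the contribution vanishes. For an interior face $F$, there are exactly two elements $T_1,T_2$ sharing $F$, and $\boldsymbol n_{\partial T_1}|_F = -\boldsymbol n_{\partial T_2}|_F$. The contribution is then $(\testGlobTrace|_F, (\nabla v|_{T_1}-\nabla v|_{T_2})\cdot \boldsymbol n_{\partial T_1})_F$. It vanishes provided the traces of $\nabla v$ from $T_1$ and $T_2$ on $F$ agree in $L^2(F)$.

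The only point needing care is the justification of this last step. Since $v\in H^2(\Omega)$, each component of $\nabla v$ lies in $H^1(\Omega)$. A function in $H^1(\Omega)$ has well-defined $L^2$ traces on the interior faces, and the one-sided traces from neighbouring elements coincide. The standard argument is to approximate by $C^\infty(\overline\Omega)$ functions, which is possible because $\Omega$ is Lipschitz, and to use continuity of the trace operators $H^1(T_i)\to L^2(F)$, which holds because the elements are Lipschitz polytopes. The face integrals are well defined because $\testGlobTrace|_F$ is a polynomial, hence in $L^\infty(F)$. Summing the zero contributions over all faces yields \eqref{eq: cancellation via normal trace component}.
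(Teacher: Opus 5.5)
Your proposal is correct and follows essentially the paper's route. The paper simply cites \cite[Corollary 1.19]{Di-Pietro.Droniou:20} with $\genericVec=\nabla v\in H^1(\Omega)^d$ and $\varphi_F=\testGlobTrace|_F$, which vanishes on boundary faces; that corollary is exactly the face-by-face cancellation you write out, using that $\testGlobTrace$ is single-valued, that the normals on an interface are opposite, and that the one-sided traces of an $H^1(\Omega)$ function coincide. Your appeal to continuity of the trace $H^1(T)\to L^2(\partial T)$ is justified because the paper assumes convex, hence Lipschitz, elements.
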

\begin{proof}
  Consequence of \cite[Corollary 1.19]{Di-Pietro.Droniou:20} applied to $\genericVec = \nabla v \in H^1(\Omega)^d$ and $(\varphi_F)_{F \in \faces}$ such that $\varphi_F = \testGlobTrace|_F$ for all $F \in \faces$ (notice that, by definition \eqref{eq:global hho spaces} of $\hhoSpaceTrace_h^k$, $\varphi_F = 0$ for all $F \subset \partial \Omega$).
\end{proof}

For skeletal $L^2$-estimates we shall need the following lemma.
\begin{lem}[Energy bound element component] \label{lemma: bound grad vT in terms of energy}
  For all $T \in \mesh$ and all $\testLoc\in\hhoSpaceLoc$, it holds
  \begin{equation}
    \label{eq: bound nabla cell dof in terms of energy norm}
    \|\nabla \testLocCell\|_T \lesssim \sqrt[]{k}\aNormNonCondLoc{\testLoc}.
  \end{equation}
\end{lem}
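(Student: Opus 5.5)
The plan is to test the definition \eqref{eq: gradHor definition} of $\gradHor$ with $\genericVec = \nabla \testLocCell \in \IP^{k-1}(T)^d \subset \IP^k(T)^d$. This gives
\[
\|\nabla \testLocCell\|_T^2 = (\gradHor\testLoc, \nabla\testLocCell)_T - (\testLocTrace - \testLocCell, \nabla\testLocCell\cdot\normalT)_{\partial T}.
\]
The first term is bounded by Cauchy--Schwarz as $\|\gradHor\testLoc\|_T\|\nabla\testLocCell\|_T \le \aNormNonCondLoc{\testLoc}\|\nabla\testLocCell\|_T$, using that $\|\gradHor\testLoc\|_T\le\aNormNonCondLoc{\testLoc}$ because $\sNonCondLoc$ is non-negative.

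The second term is where the factor $\sqrt{k}$ comes from. By Cauchy--Schwarz on $\partial T$ it is at most $\|\testLocTrace-\testLocCell\|_{\partial T}\,\|\nabla\testLocCell\|_{\partial T}$. For the first factor I use property \eqref{eq: stabilization: trace - cell dof bound}, which holds by Lemma \ref{lem: stabilization consistency} and gives $\|\testLocTrace-\testLocCell\|_{\partial T}\lesssim (h_T/k)^{1/2}\aNormNonCondLoc{\testLoc}$. For the second factor I apply the discrete trace inequality \eqref{eq: discrete inverse trace inequality} componentwise to $\nabla\testLocCell\in\IP^{k-1}(T)^d\subset\IP^k(T)^d$, which yields $\|\nabla\testLocCell\|_{\partial T}\lesssim k h_T^{-1/2}\|\nabla\testLocCell\|_T$. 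Multiplying the two bounds, the powers of $h_T$ cancel and we get
\[
(h_T/k)^{1/2}\, k\, h_T^{-1/2} = k^{1/2}.
\]
Hence the second term is $\lesssim \sqrt{k}\,\aNormNonCondLoc{\testLoc}\|\nabla\testLocCell\|_T$.

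Adding the two estimates gives $\|\nabla\testLocCell\|_T^2\lesssim(1+\sqrt k)\,\aNormNonCondLoc{\testLoc}\|\nabla\testLocCell\|_T$. Since $k\ge1$, we have $1+\sqrt k\le 2\sqrt k$; dividing by $\|\nabla\testLocCell\|_T$ (the case where it vanishes is trivial) yields \eqref{eq: bound nabla cell dof in terms of energy norm}.

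There is no real obstacle. The only point to check is that both the discrete trace inequality and (S1) carry exactly the optimal $k$-scaling, so that their product is $k^{1/2}$ and not worse. This is also why the bound cannot be improved by this argument, since it uses an inverse inequality on $\nabla\testLocCell$.
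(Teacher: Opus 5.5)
Your proof is correct and follows the paper's argument exactly. You test the definition of $\gradHor$ with $\genericVec = \nabla \testLocCell$, then combine \eqref{eq: stabilization: trace - cell dof bound} with the discrete trace inequality \eqref{eq: discrete inverse trace inequality} so the powers of $h_T$ cancel and leave $\sqrt{k}$. Applying the trace inequality with degree $k$ via $\IP^{k-1}\subset\IP^k$, rather than with the factor $k-1$ the paper writes, is slightly more careful, since it also correctly covers $k=1$.
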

\begin{proof}
  Using the definition \eqref{eq: gradHor definition} of the discrete gradient $\gradHor$ with $\trialLoc = \testLoc$ and $\genericVec = \nabla \testLocCell$, we have
  \begin{align*}
    \|\nabla \testLocCell\|^2_T &= \big(\gradHor\testLoc, \nabla \testLocCell\big)_T - \big(\testLocTrace - \testLocCell, \nabla\testLocCell\cdot\normalT\big)_{\partial T}\\
    &\le  \|\gradHor\testLoc\|_T \|\nabla v_T\|_T + \|\testLocTrace - \testLocCell\|_{\partial T} \|\nabla \testLocCell\|_{\partial T}\\
    \overset{\eqref{eq: stabilization: trace - cell dof bound}, \eqref{eq: discrete inverse trace inequality}}&\lesssim \|\gradHor\testLoc\|_T \|\nabla v_T\|_T + \errFacLoc^{\frac{1}{2}}\aNormNonCondLoc{\testLoc} \frac{k-1}{h_T^{\frac{1}{2}}} \|\nabla \testLocCell\|_{T}\\
    &\le \|\nabla \testLocCell\|_T\left(\left\|\gradHor\testLoc\right\|_T  + \sqrt{k}\aNormNonCondLoc{\testLoc}\right).
  \end{align*}
  Simplifying, the conclusion follows after noticing that $1 \le \sqrt{k}$.
\end{proof}

\subsection{Non-condensed error analysis}

In the notation of Theorems \ref{theorem: non-condensed energy error estimate} and \ref{theorem: non-condensed L2-error estimate}, let $\errGlob\in\hhoSpaceGlob$ be the approximation error given by
\begin{equation}\label{eq:errGlob}
  \errGlob \coloneqq \trialGlob - \interpolGlob{u}.
\end{equation}

\begin{proof}[Proof of Theorem \ref{theorem: non-condensed energy error estimate}]
    Using the linearity of $\aNonCondGlob$ in its first argument followed by the fact that
  $\trialGlob$ solves problem \eqref{eq: global problem discretized non-condensed}, we have, for any $\testGlob\in\hhoSpaceGlob$,
  \begin{align}
    \aNonCondGlobArg{\errGlob}{\testGlob} &= (f, \testGlobCell)_\Omega -  \aNonCondGlobArg{\interpolGlob{u}}{\testGlob} \notag\\
    \label{eq: consistency error}
    &= (-\Delta u, \testGlobCell)_\Omega -  \aNonCondGlobArg{\interpolGlob{u}}{\testGlob},
  \end{align}
    where we have used the fact that  $f = -\Delta u$ almost everywhere in $\Omega$ in the second step.
  Integrating the first term by parts inside each element, we get
  \begin{equation}\label{eq:IBP}
    \begin{aligned}
      (-\Delta u, \testGlobCell)_\Omega &= \sumT \big[(\nabla u, \nabla \testLocCell)_T + (\nabla u \cdot \normalT, -\testLocCell)_{\partial T}\big]\\
      &= \sumT \big[(\projLLocVec\nabla u, \nabla \testLocCell)_T + (\nabla u \cdot \normalT, \testLocTrace - \testLocCell)_{\partial T}\big],
    \end{aligned}
  \end{equation}
  where, in the second step, we have inserted $\testLocTrace$ into the boundary term using \eqref{eq: cancellation via normal trace component}.
  Expanding the second term in \eqref{eq: consistency error} according to \eqref{eq:aNonCondGlob}, on the other hand, we have
  \begin{equation}
    \label{eq: standard derivation p.11}
    \begin{aligned}
      &\aNonCondGlobArg{\interpolGlob{u}}{\testGlob}
      \\
      &\quad
      \begin{aligned}[t]
        &= \sumT\Big[\big(\gradHor\interpolLoc{u}, \gradHor\testLoc\big)_T + \sNonCondLocArg{\interpolLoc{u}}{\testLoc}\Big]\\
        \overset{\eqref{eq: gradHor commutation with interpolation}} &= \sumT\Big[\big(\projLLocVec\nabla u, \gradHor\testLoc\big)_T + \sNonCondLocArg{\interpolLoc{u}}{\testLoc}\Big]\\
        \overset{\eqref{eq: gradHor definition}}&= \sumT\Big[\big(\projLLocVec\nabla u, \nabla\testLocCell\big)_T + (\projLLocVec\nabla u \cdot \normalT, \testLocTrace - \testLocCell)_{\partial T} + \sNonCondLocArg{\interpolLoc{u}}{\testLoc}\Big].
      \end{aligned}
    \end{aligned}
  \end{equation}
    Inserting \eqref{eq:IBP} and \eqref{eq: standard derivation p.11}
  back into \eqref{eq: consistency error} gives
  \begin{equation}\label{eq: energy error estimate intermediate}
    \begin{aligned}
      \aNonCondGlobArg{\errGlob}{\testGlob} &= \sumT \left[
        (\nabla u - \projLLocVec\nabla u)\cdot\normalT, \testLocTrace - \testLocCell\big)_{\partial T} - \sNonCondLocArg{\interpolLoc{u}}{\testLoc}
        \right]
      \\
      & \le  \left(
      \sumT \frac{h_T}{k} \|(\nabla u - \projLLocVec\nabla u)\|_{\partial T}^2
      \right)^{\frac{1}{2}} \left(
      \sumT\frac{k}{h_T} \|\testLocTrace - \testLocCell\|^2_{\partial T}
      \right)^{\frac{1}{2}}
      \\
      &\tab + \left(
      \sumT |\interpolLoc{u}|_{\sNonCondLoc}^2
      \right)^{\frac{1}{2}} \left(
      \sumT|\testLoc|_{\sNonCondLoc}^2
      \right)^{\frac{1}{2}}
      \\
      \overset{\eqref{eq: approximation L2-projection on trace}, \eqref{eq: stabilization: trace - cell dof bound}, \eqref{eq: stabilization consistency}}
      & \lesssim \left(
      \sumT \errFacLoc^{2(m-1)}\|u\|_{m, T}^2
      \right)^{\frac{1}{2}} \left(
      \sumT \aNormNonCondLoc{\testLoc}
      \right)^{\frac{1}{2}}
      \\
      & \lesssim  \errFac^{m-1}\|u\|_{m,h} \aNormNonCondGlob{\testGlob},
    \end{aligned}
  \end{equation}
  where, in the second step, we used discrete Hölder inequalities along with $\|\normalT\|_{L^\infty(\partial T)^d} = 1$.
  Choosing $\testGlob = \errGlob$, recalling \eqref{eq: aNormNonCondGlob}, and simplifying finishes the proof.
\end{proof}

\begin{proof}[Proof of Theorem \ref{theorem: non-condensed L2-error estimate}]
  Our approach follows the one in \cite{Aghili.Di-Pietro.ea:17}. Let $z\in H_0^1(\Omega)$ solve
  $$
  (\nabla z, \nabla v)_\Omega = (\errGlobCell, v)_\Omega \qquad \forall v\in H^{1}_0(\Omega).
  $$
  Since $\errGlobCell\in L^2(\Omega)$, elliptic regularity yields $z\in H^2(\Omega)$ and we have $-\Delta z  = \errGlobCell$ almost everywhere in $\Omega$.
  We have, integrating by parts on each element and using  \eqref{eq: cancellation via normal trace component} as in \eqref{eq:IBP},
    \begin{equation}
    \label{eq: non-condensed L2-error error expression}
    \|\errGlobCell\|_\Omega^2
    = (\errGlobCell, -\Delta z)_\Omega
    = \sumT \Big[(\nabla \errLocCell, \projLLocVec\nabla z)_T + (\errLocTrace-\errLocCell, \nabla z \cdot \normalT)_{\partial T}\Big].
    \end{equation}
  We will add and subtract two different expressions for $\aNonCondGlobArg{\errGlob}{\interpolGlob{z}}$ to the above error. The minus expression is obtained as for \eqref{eq: standard derivation p.11}:
  \begin{multline}
    \label{eq: non-condensed L2-error minus expression}
    \aNonCondGlobArg{\errGlob}{\interpolGlob{z}}
    \\
    \overset{\eqref{eq: gradHor commutation with interpolation}, \eqref{eq: gradHor definition}}= \sumT\big[\big(\nabla\errLocCell, \projLLocVec\nabla z\big)_T + (\errLocTrace-\errLocCell, \projLLocVec\nabla z \cdot \normalT)_{\partial T} + \sNonCondLocArg{\errLoc}{\interpolLoc{z}}\big].
  \end{multline}
  The plus expression is obtained by using the definition \eqref{eq:errGlob} of the error together with the linearity of $\aNonCondGlob$ in its first argument to write
  \begin{equation}
    \label{eq: non-condensed L2-error plus expression}
    \begin{aligned}
      &\aNonCondGlobArg{\errGlob}{\interpolGlob{z}}
      \\
      &
      \begin{aligned}[t]
        &=\aNonCondGlobArg{\trialGlob}{\interpolGlob{z}} - \aNonCondGlobArg{\interpolGlob{u}}{\interpolGlob{z}}
        \\
        \overset{\eqref{eq: global problem discretized non-condensed}, \eqref{eq:aNonCondGlob}, \eqref{eq: gradHor commutation with interpolation}}&=
        \sumT\left[
          (f, \projLLoc z)_T - (\projLLocVec\nabla u, \projLLocVec\nabla z)_T - \sNonCondLocArg{\interpolLoc{u}}{\interpolLoc{z}}
          \right]
        \\
        &= \sumT\left[
          (f,  \projLLoc z)_T - (f, z)_T + (\nabla u, \nabla z)_T - (\projLLocVec\nabla u, \projLLocVec\nabla z)_T
          - \sNonCondLocArg{\interpolLoc{u}}{\interpolLoc{z}}
          \right]
        \\
        &= \sumT\left[
          (f - \projLLoc f, \projLLoc z - z)_T + (\nabla u-\projLLocVec\nabla u, \nabla z-\projLLocVec\nabla z)_T
          - \sNonCondLocArg{\interpolLoc{u}}{\interpolLoc{z}}
          \right],
      \end{aligned}
    \end{aligned}
  \end{equation}
  where in the third step we added $0 \overset{\eqref{eq: continuous Poisson problem}} = -(f, z)_\Omega + (\nabla u, \nabla z)_\Omega$,
    while the conclusion follows from the fact that $\projLLoc z - z$ and $\nabla z-\projLLocVec\nabla z$ are by definition $L^2$-orthogonal to $\projLLoc f \in \IP^k(T)$ and $\projLLocVec\nabla u \in \IP^k(T)^d$, respectively.
  Subtracting \eqref{eq: non-condensed L2-error minus expression} and adding \eqref{eq: non-condensed L2-error plus expression} to \eqref{eq: non-condensed L2-error error expression}, we get
  \begin{equation}
    \label{eq: non-condensed L2-error combined expression}
    \begin{aligned}
      &\|\errGlobCell\|_\Omega^2
      = \underbrace{%
        \sumT \Big[\big(\errLocTrace-\errLocCell, (\nabla z - \projLLocVec\nabla z) \cdot \normalT\big)_{\partial T} - \sNonCondLocArg{\errLoc}{\interpolLoc{z}}\Big]
      }_{\mathfrak{T}_1}
      \\
      &\quad
      \underbrace{%
        + \sumT\big[(f - \projLLoc f, \projLLoc z - z)_T + (\nabla u-\projLLocVec\nabla u, \nabla z-\projLLocVec\nabla z)_T - \sNonCondLocArg{\interpolLoc{u}}{\interpolLoc{z}}\big].
      }_{\mathfrak{T}_2}
    \end{aligned}
  \end{equation}
    Proceeding as for the first inequality in \eqref{eq: energy error estimate intermediate} with substitutions $u \gets z$, $\testGlob \gets \errGlob$, and $m \gets 2$, we get
  \[
    \mathfrak{T}_1
    \lesssim \frac{h}{k} \aNormNonCondGlob{\errGlob}\|z\|_{2,\Omega}
    \overset{\eqref{eq: theorem non-condensed energy error estimate}}\lesssim \errFac^{m}\|u\|_{m,h}\|z\|_{2,\Omega}.
  \]
  Moving on to the second sum and applying again Cauchy--Schwarz inequalities, we get
  \begin{align*}
    \mathfrak{T}_2
    &\le \left(
    \sumT \|f - \projLLoc f\|^2_T
    \right)^{\frac{1}{2}} \left(
    \sumT \|z - \projLLoc z\|^2_T
    \right)^{\frac{1}{2}}
    + \left(
    \sumT \|\nabla u-\projLLocVec\nabla u\|^2_T
    \right)^{\frac{1}{2}}\left(
    \sumT \|\nabla z-\projLLocVec\nabla z\|^2_T
    \right)^{\frac{1}{2}}\\
    &\quad + \left(
    \sumT \sNormNonCondLoc{\interpolLoc{u}}^2
    \right)^{\frac{1}{2}} \left(
    \sumT \sNormNonCondLoc{\interpolLoc{z}}^2
    \right)^{\frac{1}{2}}
    \\
    \overset{\eqref{eq: approximation L2-projection on cell}, \eqref{eq: stabilization consistency}}&\lesssim
    \left(
    \sumT \errFacLoc^{2(m-2)}\|f\|^2_{m-2,T}
    \right)^{\frac{1}{2}} \left(
    \sumT \errFacLoc^{4}\|z\|^2_{2,T}
    \right)^{\frac{1}{2}}
    \\
    &\quad + \left(
    \sumT \errFacLoc^{2(m-1)}\|u\|^2_{m,T}
    \right)^{\frac{1}{2}}\left(
    \sumT \errFacLoc^{2}\|z\|^2_{2,T}
    \right)^{\frac{1}{2}}
    \\
    &\lesssim \errFac^{m}\left( \|f\|_{m-2,h} + \|u\|_{m,h} \right)\|z\|_{2,\Omega},
  \end{align*}
  Substituting the bounds for $\mathfrak{T}_1$ and $\mathfrak{T}_2$ into \eqref{eq: non-condensed L2-error combined expression}, we obtain
  \begin{align*}
    \|\errGlobCell\|_\Omega^2\lesssim \errFac^{m}\left( \|f\|_{m-2,h} + \|u\|_{m,h} \right) \|z\|_{2,\Omega}.
  \end{align*}
    Using elliptic regularity to write $\|z\|_{2,\Omega} \overset{\eqref{eq: elliptic regularity}} \lesssim \|\errGlobCell\|_\Omega$ and simplifying, the conclusion follows.
\end{proof}

\subsection{Condensed error analysis}\label{section: condensed error analysis}
To prove the condensed error estimates of Section \ref{section: hho main results}, we will use the non-condensed error estimates from the last section and the following new error estimates for the local condensation operators $\recLoc^k$ and $\vLoc$.

\begin{lem}[Error estimate on $\vLoc$ to first order]
  For any $T\in\mesh$ and any $g\in L^2(T)$, let $u_T^P\in H_0^1(T)$ be the solution to
  \begin{equation}
    \label{eq: u_T^P definition}
    (\nabla u_T^P, \nabla v)_T = (g, v)_T \qquad \forall v\in H_0^{1}(T).
  \end{equation}
  Then, it holds,
  \begin{align}
    \label{eq: V_T^k energy error estimate}
    \aNormNonCondLoc{(\vLoc g - \projLLoc u_T^P, 0)} &\lesssim \frac{h_T}{k}\|g\|_{T},
    \\
    \label{eq: V_T^k L2 error estimate}
    \|\vLoc g - \projLLoc u_T^P\|_T &\lesssim\errFacLoc^{2}\|g\|_{T}.
  \end{align}
\end{lem}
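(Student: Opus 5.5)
We read $\vLoc g$ as the HHO solution, on the single element $T$, of the local Dirichlet problem \eqref{eq: u_T^P definition}. Its discrete space is $\hhoSpaceCell_T^k\times\{0\}$, with boundary trace unknowns set to zero. The proof then repeats the proofs of Theorems \ref{theorem: non-condensed energy error estimate} and \ref{theorem: non-condensed L2-error estimate} with $\Omega$ replaced by $T$, $u$ by $u_T^P$, and $m=2$. We set $e_T\coloneqq \vLoc g-\projLLoc u_T^P$. Since $u_T^P\in H_0^1(T)$, we have $\projLTraceLoc u_T^P=0$, so $(e_T,0)=(\vLoc g,0)-\interpolLoc{u_T^P}$. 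Elliptic regularity \eqref{eq: elliptic regularity} on $D=T$ (elements are convex) gives $u_T^P\in H^2(T)$ with $\|u_T^P\|_{2,T}\lesssim\|g\|_T$, and $-\Delta u_T^P=g$ a.e.

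\emph{Energy estimate.} For $v_T\in\hhoSpaceCell_T^k$ we will compute $\aNonCondLocArg{(e_T,0)}{(v_T,0)}=(-\Delta u_T^P,v_T)_T-\aNonCondLocArg{\interpolLoc{u_T^P}}{(v_T,0)}$. We integrate by parts on $T$. No cancellation lemma like \eqref{eq: cancellation via normal trace component} is needed, because the test trace is zero and the boundary term reads $(\nabla u_T^P\cdot\normalT,0-v_T)_{\partial T}$ directly. We expand the second term with \eqref{eq: gradHor commutation with interpolation} and \eqref{eq: gradHor definition} exactly as in \eqref{eq: standard derivation p.11}. This leaves
\[
\big((\nabla u_T^P-\projLLocVec\nabla u_T^P)\cdot\normalT,\,0-v_T\big)_{\partial T}-\sNonCondLocArg{\interpolLoc{u_T^P}}{(v_T,0)}.
\]
We bound the first term by the trace approximation \eqref{eq: approximation L2-projection on trace} applied componentwise to $\nabla u_T^P\in H^1(T)^d$ with $m=1$, combined with \eqref{eq: stabilization: trace - cell dof bound}. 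We bound the stabilization term by \eqref{eq: stabilization consistency} with $m=2$. Both pieces give $\frac{h_T}{k}\|u_T^P\|_{2,T}\aNormNonCondLoc{(v_T,0)}$. Taking $v_T=e_T$ and using regularity yields \eqref{eq: V_T^k energy error estimate}.

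\emph{$L^2$ estimate.} We use an Aubin--Nitsche argument on $T$. Let $z\in H^1_0(T)$ solve $(\nabla z,\nabla v)_T=(e_T,v)_T$, so that $\|z\|_{2,T}\lesssim\|e_T\|_T$ and $\projLTraceLoc z=0$. Then $\interpolLoc z=(\projLLoc z,0)$ is an admissible test function. We copy \eqref{eq: non-condensed L2-error error expression}--\eqref{eq: non-condensed L2-error combined expression} with $f\gets g$. The term $\mathfrak T_1$ is bounded by $\frac{h_T}{k}\aNormNonCondLoc{(e_T,0)}\|z\|_{2,T}$, and the energy bound just proved turns this into $(h_T/k)^2\|g\|_T\|z\|_{2,T}$.

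The main obstacle is $\mathfrak T_2$, because only $g\in L^2(T)$ is available (the case $m=2$, so there is no approximation gain on $g$). We bound $(g-\projLLoc g,\projLLoc z-z)_T\le\|g\|_T(h_T/k)^2\|z\|_{2,T}$ using \eqref{eq: approximation L2-projection on cell} on $z$ alone. The gradient term is bounded via \eqref{eq: approximation L2-projection on cell} with $m=1$ on $\nabla u_T^P$ and on $\nabla z$, giving $(h_T/k)^2\|u_T^P\|_{2,T}\|z\|_{2,T}$. The stabilization product is bounded via \eqref{eq: stabilization consistency} twice. Dividing by $\|e_T\|_T$ then gives \eqref{eq: V_T^k L2 error estimate}.
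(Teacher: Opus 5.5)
Your proposal is correct and follows the paper's approach. The paper likewise observes that $(\vLoc g,0)$ is the HHO solution of the local Dirichlet problem on the single-element mesh $(T,\facesLoc)$. It then invokes Theorems~\ref{theorem: non-condensed energy error estimate} and~\ref{theorem: non-condensed L2-error estimate} with $m=2$ as black boxes and concludes with elliptic regularity, $\|u_T^P\|_{2,T}\lesssim\|g\|_T$. You instead unroll those two proofs in the single-element setting, correctly noting that the normal-trace cancellation is trivial because all trace unknowns vanish; this is more explicit but not a different route.
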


\begin{proof}
  By elliptic regularity, we have $u_T^P\in H^2(T)$.
  It suffices to notice that, by definition \eqref{eq:vLoc}, $(\vLoc g , 0)\in\hhoSpaceCell_T^k\times\{0\}$ is the discretized solution to problem \eqref{eq: u_T^P definition} with the HHO scheme on the mesh $(T, \facesLoc)$. Thus, the non-condensed error estimates of \eqref{eq: theorem non-condensed energy error estimate} and \eqref{eq: theorem non-condensed L2 error estimate} yield, respectively,
  \begin{align*}
    \aNormNonCondLoc{(\vLoc g - \projLLoc u_T^P, 0)} &= \aNormNonCondLoc{(\vLoc g , 0) - \interpolLoc{u_T^P}} \lesssim\frac{h_T}{k}\|u_T^P\|_{2, T},
    \\
    \|\vLoc g - \projLLoc u_T^P\|_T &\lesssim\errFacLoc^{2}\big(\|u_T^P\|_{2, T} + \|g\|_{T}\big).
  \end{align*}
  We then use elliptic regularity to write $\|u_T^P\|_{2, T}\overset{\eqref{eq: elliptic regularity}}  \lesssim \|g\|_{T}$ and conclude.
\end{proof}

\begin{lem}[Error estimate on $\recLoc^k$ to first order]
  For any $T\in\mesh$ and any $w\in H^2(T)$, let $\gamma \coloneqq w|_{\partial T}$ and let $u_T^H\in H^1(T)$ be the solution to
  \begin{subequations}\label{eq: u_T^H}
    \begin{align}
      \label{eq: u_T^H definition}
      (\nabla u_T^H, \nabla v)_T &= 0 \qquad \forall v\in H_0^{1}(T)\\
      \label{eq: u_T^H boundary condition}
      u_T^H|_{\partial T} &= \gamma.
    \end{align}
  \end{subequations}
  Then,
  \begin{align}
    \label{eq: U_T^k energy error estimate}
    \aNormNonCondLoc{(\recLoc^k \projLTraceLoc \gamma - \projLLoc u_T^H, 0)} &\lesssim\frac{h_T}{k}\|u_T^H\|_{2, T},
    \\
    \label{eq: U_T^k L2 error estimate}
    \|\recLoc^k \projLTraceLoc \gamma - \projLLoc u_T^H\|_T &\lesssim\errFacLoc^{2}\|u_T^H\|_{2, T}.
  \end{align}
\end{lem}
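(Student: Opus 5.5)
The key observation is that $(\recLoc^k\projLTraceLoc\gamma,\projLTraceLoc\gamma)\in\hhoSpaceLoc$ is exactly the HHO approximation, on the one-element mesh $(T,\facesLoc)$, of the Dirichlet problem \eqref{eq: u_T^H}, with the boundary datum imposed through its $L^2$-projection. Indeed, definition \eqref{eq: U_T^k definition} says precisely that
\[
\aNonCondLocArg{(\recLoc^k\projLTraceLoc\gamma,\projLTraceLoc\gamma)}{(\testLocCell,0)}=0
\qquad\forall\testLocCell\in\hhoSpaceCell_T^k,
\]
which is the discrete analogue of \eqref{eq: u_T^H definition}. Since $u_T^H|_{\partial T}=\gamma$, the trace component of the interpolant $\interpolLoc{u_T^H}$ equals $\projLTraceLoc\gamma$. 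The local error
\[
(e_T,0)\coloneqq(\recLoc^k\projLTraceLoc\gamma,\projLTraceLoc\gamma)-\interpolLoc{u_T^H}
=(\recLoc^k\projLTraceLoc\gamma-\projLLoc u_T^H,0)
\]
therefore has vanishing trace component and lies in $\hhoSpaceCell_T^k\times\{0\}$. This is the same structural situation as in the proofs of Theorems \ref{theorem: non-condensed energy error estimate} and \ref{theorem: non-condensed L2-error estimate}. There the error also has zero boundary trace, and the boundary term is handled by the fact that test functions vanish on $\partial\Omega$. I will redo those two proofs on $T$ with $f=0$ and $m=2$. I cannot quote the theorems verbatim, because $u_T^H$ has nonhomogeneous boundary data.

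\emph{Energy estimate.} For any $\testLocCell\in\hhoSpaceCell_T^k$, linearity gives $\aNonCondLocArg{(e_T,0)}{(\testLocCell,0)}=-\aNonCondLocArg{\interpolLoc{u_T^H}}{(\testLocCell,0)}$. Since $\Delta u_T^H=0$ in $T$, integration by parts gives $0=(\nabla u_T^H,\nabla\testLocCell)_T-(\nabla u_T^H\cdot\normalT,\testLocCell)_{\partial T}$. Because the test trace is $0$, this boundary term already has the form $(\nabla u_T^H\cdot\normalT,0-\testLocCell)_{\partial T}$, so no analogue of \eqref{eq: cancellation via normal trace component} is needed. Expanding $\aNonCondLoc$ as in \eqref{eq: standard derivation p.11}, using \eqref{eq: gradHor commutation with interpolation} and \eqref{eq: gradHor definition}, yields the identity
\[
\aNonCondLocArg{(e_T,0)}{(\testLocCell,0)}
=\big((\nabla u_T^H-\projLLocVec\nabla u_T^H)\cdot\normalT,\,0-\testLocCell\big)_{\partial T}
-\sNonCondLocArg{\interpolLoc{u_T^H}}{(\testLocCell,0)}.
\]
I bound the first term with \eqref{eq: approximation L2-projection on trace} applied to $\nabla u_T^H$ (with $m=1$) together with \eqref{eq: stabilization: trace - cell dof bound}. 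I bound the second with \eqref{eq: stabilization consistency} for $m=2$. This gives $\lesssim \frac{h_T}{k}\|u_T^H\|_{2,T}\,\aNormNonCondLoc{(\testLocCell,0)}$, and choosing $\testLocCell=e_T$ proves \eqref{eq: U_T^k energy error estimate}.

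\emph{$L^2$ estimate.} I use the Aubin--Nitsche argument of Theorem \ref{theorem: non-condensed L2-error estimate} on $T$. Let $z\in H_0^1(T)$ solve $-\Delta z=e_T$. By \eqref{eq: elliptic regularity} on $D=T$, $\|z\|_{2,T}\lesssim\|e_T\|_T$. As in \eqref{eq: non-condensed L2-error error expression}, with $\errLocTrace=0$,
\[
\|e_T\|_T^2=(\nabla e_T,\projLLocVec\nabla z)_T+(0-e_T,\nabla z\cdot\normalT)_{\partial T}.
\]
I subtract the expansion \eqref{eq: non-condensed L2-error minus expression} of $\aNonCondLocArg{(e_T,0)}{\interpolLoc z}$, which is legitimate since $\interpolLoc z=(\projLLoc z,0)$ because $z$ vanishes on $\partial T$. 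I then add the alternative expression $\aNonCondLocArg{(e_T,0)}{(\projLLoc z,0)}=-\aNonCondLocArg{\interpolLoc{u_T^H}}{(\projLLoc z,0)}$. The latter equals
\[
-(\projLLocVec\nabla u_T^H,\projLLocVec\nabla z)_T-\sNonCondLocArg{\interpolLoc{u_T^H}}{\interpolLoc z},
\]
and adding $0=(\nabla u_T^H,\nabla z)_T$, which holds by \eqref{eq: u_T^H definition} since $z\in H_0^1(T)$, turns it into
\[
-(\nabla u_T^H-\projLLocVec\nabla u_T^H,\nabla z-\projLLocVec\nabla z)_T-\sNonCondLocArg{\interpolLoc{u_T^H}}{\interpolLoc z}.
\]
The resulting terms $\mathfrak T_1,\mathfrak T_2$ are bounded exactly as before, using the energy estimate just proven, \eqref{eq: approximation L2-projection on cell} and \eqref{eq: stabilization consistency}. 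There is no $f$-term. This gives $\|e_T\|_T^2\lesssim(\frac{h_T}{k})^2\|u_T^H\|_{2,T}\|z\|_{2,T}$, and \eqref{eq: U_T^k L2 error estimate} follows.

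The main thing to check carefully is the sign bookkeeping in the $L^2$ step. In particular, I must confirm that the identity for $\aNonCondLocArg{(e_T,0)}{\cdot}$ holds with the discrete right-hand side identically zero, and that no boundary term involving $\gamma$ survives. Both rest on the trace component of $\interpolLoc{u_T^H}$ being exactly $\projLTraceLoc\gamma$ and on $z$ and the test functions vanishing on $\partial T$.
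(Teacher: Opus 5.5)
Your proposal is correct and follows the paper's proof essentially line by line. You use the same energy identity, obtained from $\Delta u_T^H=0$ and the vanishing trace component of the error, and the same Aubin--Nitsche argument on $T$ with $z\in H_0^1(T)$ and $(\nabla u_T^H,\nabla z)_T=0$. The paper additionally writes $u_T^H$ as $w$ plus an $H_0^1(T)$ correction to obtain $u_T^H\in H^2(T)$ from elliptic regularity, which you tacitly assume. Also, the term $(\nabla u_T^H-\projLLocVec\nabla u_T^H,\nabla z-\projLLocVec\nabla z)_T$ in your plus expression should carry a $+$ sign; this is harmless, since that term is only bounded via Cauchy--Schwarz.
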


\begin{proof}
We decompose $u_T^H$ as $u_T^H = w + u^*$, where $u^*\in H_0^1(T)$ solves
  \[
  (\nabla u^*, \nabla v)_T = -(\Delta w, v)_T \qquad \forall v\in H_0^1(T),
  \]
  and, by elliptic regularity, $u^*\in H^2(T)$. Now, let
  \begin{equation}\label{eq: errLocCell}
    \errLocCell \coloneqq \recLoc^k \projLTraceLoc \gamma - \projLLoc u_T^H.
  \end{equation}
  \noindent\emph{Proof of \eqref{eq: U_T^k energy error estimate}.}
  Notice that, as $\projLTraceLoc u_T^H = \projLTraceLoc\gamma$, we have, for any $\testLocCell\in\hhoSpaceCell_T^k$,
  \[
  \begin{aligned}
    \aNonCondLocArg{(\errLocCell, 0)}{(\testLocCell, 0)}
    \overset{\eqref{eq: U_T^k definition}}
    &= -\aNonCondLocArg{\interpolLoc{u_T^H}}{(\testLocCell, 0)}
    \\
    \overset{\eqref{eq:aNonCondGlob},\eqref{eq: gradHor commutation with interpolation}}
    &= -\big(\projLLocVec\nabla u_T^H, \gradHor(\testLocCell, 0)\big)_T - \sNonCondLocArg{\interpolLoc{u_T^H}}{(\testLocCell, 0)}
    \\
    \overset{\eqref{eq: gradHor definition}}
    &= -(\nabla u_T^H, \nabla\testLocCell)_T
    + (\projLLocVec\nabla u_T^H\cdot\normalT, \testLocCell)_{\partial T}
    - \sNonCondLocArg{\interpolLoc{u_T^H}}{(\testLocCell, 0)}
    \\
    &= \cancel{(\Delta u_T^H, \testLocCell)_T}
    - \big((\nabla u_T^H - \projLLocVec\nabla u_T^H)\cdot\normalT, \testLocCell\big)_{\partial T}
    - \sNonCondLocArg{\interpolLoc{u_T^H}}{(\testLocCell, 0)}
    \\
    &\le \|\nabla u_T^H - \projLLocVec\nabla u_T^H\|_{\partial T}\|\testLocCell\|_{\partial T} + \sNormNonCondLoc{\interpolLoc{u_T^H}}\sNormNonCondLoc{(\testLocCell, 0)}
    \\
    \overset{\eqref{eq: approximation L2-projection on trace}, \eqref{eq: stabilization: trace - cell dof bound}, \eqref{eq: stabilization consistency}}
    &\lesssim \frac{h_T}{k}\|u_T^H\|_{2, T}\aNormNonCondLoc{(\testLocCell, 0)},
  \end{aligned}
  \]
  where
  in the fourth step we integrated by parts and used the fact that $\Delta u_T^H = 0$ almost everywhere in $T$,
  while in the fifth step we used H\"{o}lder and Cauchy--Schwarz inequalities along with $\| \normalT \|_{L^\infty(\partial T)^d} \le 1$.
  We choose $\testLocCell = \errLocCell$ and simplify to conclude.
  \smallskip\\
  \noindent\emph{Proof of \eqref{eq: U_T^k L2 error estimate}.}
  We proceed similarly to the proof of the non-condensed $L^2$-estimate in Theorem \ref{theorem: non-condensed L2-error estimate}.
  Let $z\in H_0^1(T)$ solve
  $$
  (\nabla z, \nabla v)_T = (\errLocCell, v)_T \qquad \forall v\in H_0^1(T).
  $$
  By elliptic regularity, $z\in H^2(T)$ and $-\Delta z  = \errLocCell$ almost everywhere.
  Integrating by parts, we then have
  \begin{equation}
    \label{eq: U_T^k L2-error error expression}
    \|\errLocCell\|_T^2 = (\errLocCell, -\Delta z)_T
    = (\nabla\errLocCell, \nabla z)_T - (\errLocCell, \nabla z\cdot\normalT)_{\partial T}.
  \end{equation}
  We will add and subtract two different expressions for $\aNonCondLocArg{(\errLocCell, 0)}{\interpolLoc{z}}$ to the above error. The minus expression is obtained proceeding as for \eqref{eq: standard derivation p.11}:
  \begin{multline}
    \label{eq: U_T^k L2-error minus expression}
    \aNonCondLocArg{(\errLocCell, 0)}{\interpolLoc{z}}
    \\
    \overset{\eqref{eq:aNonCondGlob},\eqref{eq: gradHor commutation with interpolation}, \eqref{eq: gradHor definition}}
    = (\nabla\errLocCell, \projLLocVec\nabla z)_T - (\errLocCell, \projLLocVec\nabla z \cdot \normalT)_{\partial T} + \sNonCondLocArg{(\errLocCell, 0)}{\interpolLoc{z}}.
  \end{multline}
  The plus expression is obtained by expanding the error component and noting that, as $\projLTraceLoc z = 0$ since $z \in H_0^1(T)$ and $\projLTraceLoc u_T^H = \projLTraceLoc \gamma$,
  \begin{equation}
    \label{eq: U_T^k L2-error plus expression}
    \begin{aligned}
      &\aNonCondLocArg{(\errLocCell, 0)}{\interpolLoc{z}}
      \\
      &\quad
      \begin{aligned}[t]
        \overset{\eqref{eq: errLocCell}}&=
        \aNonCondLocArg{(\recLoc^k \projLTraceLoc \gamma, 0)}{\interpolLoc{z}} - \aNonCondLocArg{(\projLLoc u_T^H, 0)}{\interpolLoc{z}}\\
        \overset{\eqref{eq: U_T^k definition}}
        &= -\aNonCondLocArg{\interpolLoc{u_T^H}}{\interpolLoc{z}}\\
        &= -(\projLLocVec\nabla u_T^H, \projLLocVec\nabla z)_T
        -\sNonCondLocArg{\interpolLoc{u_T^H}}{\interpolLoc{z}}\\
        &= (\nabla u_T^H -\projLLocVec\nabla u_T^H,\nabla z -\projLLocVec\nabla z)_T
        -\sNonCondLocArg{\interpolLoc{u_T^H}}{\interpolLoc{z}},
      \end{aligned}
    \end{aligned}
  \end{equation}
  where, in the last step, we added $0\overset{\eqref{eq: u_T^H definition}} = (\nabla u_T^H, \nabla z)_T$ and used $L^2$-orthogonality.
  Subtracting \eqref{eq: U_T^k L2-error minus expression} and adding \eqref{eq: U_T^k L2-error plus expression} to \eqref{eq: U_T^k L2-error error expression}, we get
  \begin{align*}
    \|\errLocCell\|_T^2 &= -\big(\errLocCell, (\nabla z - \projLLocVec\nabla z \cdot \normalT)\big)_{\partial T} + \sNonCondLocArg{(\errLocCell, 0)}{\interpolLoc{z}}\\
    &\quad + (\nabla u_T^H -\projLLocVec\nabla u_T^H,\nabla z -\projLLocVec\nabla z)_T -\sNonCondLocArg{\interpolLoc{u_T^H}}{\interpolLoc{z}}\\
    \overset{\eqref{eq: stabilization: trace - cell dof bound}, \eqref{eq: approximation L2-projection on trace}, \eqref{eq: stabilization consistency}, \eqref{eq: approximation L2-projection on cell}}
    &\lesssim \frac{h_T}{k}\aNormNonCondLoc{(\errLocCell, 0)}\|z\|_{2,T} + \errFacLoc^{2}\|u_T^H\|_{2,T}\|z\|_{2,T}\\
    \overset{\eqref{eq: U_T^k energy error estimate}}
    &\lesssim \errFacLoc^2\|u_T^H\|_{2, T}\|z\|_{2,T}.
  \end{align*}
  We use elliptic regularity to infer $\|z\|_{2,T}\overset{\eqref{eq: elliptic regularity}} \lesssim \|\errLocCell\|_T$ and simplify to conclude.
\end{proof}

To prove Theorems \ref{theorem: condensed energy error estimate} and \ref{theorem: condensed L2-error estimate}, we first state some common tools. Let $f$ and $u$ be as in these theorems.
For all $T\in\mesh$, let $u_T^P$ be defined by \eqref{eq: u_T^P definition} with $f$ replacing $g$, and let $u_T^H$ be defined by \eqref{eq: u_T^H} with $u$ replacing $w$. Then, both $u|_T$ and $u_T^P + u_T^H$ are weak solutions to the PDE
\[
\begin{alignedat}{2}
  -\Delta \widetilde{u} &= f &\quad& \text{on $T$},
  \\
  \widetilde{u} &= u|_{\partial T} &\quad& \text{on $\partial T$},
\end{alignedat}
\]
and so, almost everywhere,
\begin{equation}
  \label{eq: u|_T = u_T^P + u_T^H}
  u|_T = u_T^P + u_T^H.
\end{equation}
Furthermore, we have
\begin{equation}
  \label{eq: u_T^H help bound}
  \sumT \|u_T^H\|_{2,T}^2 \lesssim \|f\|_\Omega^2,
\end{equation}
as adding and subtracting $u_T^P$ gives
\[
\sumT\|u_T^H\|^2_{2, T}
\lesssim \sumT \left( \|u_T^H + u_T^P\|^2_{2, T} + \|u_T^P\|^2_{2, T} \right)
\overset{\eqref{eq: u|_T = u_T^P + u_T^H}}
= \|u\|^2_{2, \Omega} + \sumT\|u_T^P\|^2_{2, T}
\lesssim \|f\|_\Omega^2,
\]
where, in the last step, we used elliptic regularity \eqref{eq: elliptic regularity} to write $\|u\|_{2, \Omega} \lesssim \|f\|_\Omega$ and $\|u_T^P\|_{2, T} \lesssim \|f\|_T$ for all $T \in \mesh$.

\begin{proof}[Proof of Theorem \ref{theorem: condensed energy error estimate}]
  By definition \eqref{eq:aNormGlob} of the condensed norm and linearity of $\recGlob^k$, it holds
  \begin{align*}
    \aNormGlob{\trialGlobTrace - \projLTraceGlob u} &= \aNormNonCondGlob{(\recGlob^k\trialGlobTrace-\recGlob^k\projLTraceGlob u, \trialGlobTrace-\projLTraceGlob u)}\\
    &= \aNormNonCondGlob{(\recGlob^k\trialGlobTrace + \vGlob f, \trialGlobTrace)-\interpolGlob{u} - (\recGlob^k\projLTraceGlob u + \vGlob f - \projLGlob u, 0)}\\
    &\le \aNormNonCondGlob{(\recGlob^k\trialGlobTrace + \vGlob f, \trialGlobTrace)-\interpolGlob{u}} + \aNormNonCondGlob{(\recGlob^k\projLTraceGlob u + \vGlob f - \projLGlob u, 0)},
  \end{align*}
  where we have added and subtracted the quantity $(\vGlob f, 0) + (\projLGlob u, 0)$ in the second step and used triangle inequalities to conclude.
  Recalling \eqref{eq: condensed and non-condendensed solutions relation} and \eqref{eq:errGlob}, the first term in the right-hand side of the above expression is precisely the error in the non-condensed energy norm, and it is bounded by \eqref{eq: theorem non-condensed energy error estimate}.
  For the second term, we substitute $u|_T \overset{\eqref{eq: u|_T = u_T^P + u_T^H}} = u_T^P + u_T^H$ for all $T \in \mesh$ to get:
  \[
  \begin{aligned}
    \aNormNonCondGlob{(\recGlob^k\projLTraceGlob u + \vGlob f - \projLGlob u, 0)}^2
    &= \sumT \aNormNonCondLoc{(\recLoc^k\projLTraceLoc u + \vLoc f - \projLLoc u_T^P - \projLLoc u_T^H, 0)}^2\\
    &\le 2 \sumT\left(
    \aNormNonCondLoc{(\recLoc^k\projLTraceLoc u - \projLLoc u_T^H, 0)}^2 + \aNormNonCondLoc{(\vLoc f - \projLLoc u_T^P, 0)}^2
    \right)
    \\
    \overset{\eqref{eq: U_T^k energy error estimate}, \eqref{eq: V_T^k energy error estimate}}
    &\lesssim \errFac^{2}\sumT\left(
    \|u_T^H\|^2_{2, T} + \|f\|^2_{T}
    \right)
    \\
    \overset{\eqref{eq: u_T^H help bound}}
    &\lesssim \errFac^2\|f\|_\Omega^2.\qedhere
  \end{aligned}
  \]
\end{proof}

\begin{proof}[Proof of Theorem \ref{theorem: condensed L2-error estimate}]
  Let $\errGlobTrace \coloneqq \trialGlobTrace - \projLTraceGlob u \in \hhoSpaceTrace_h^k$.
  We proceed as follows:
  \begin{equation}
    \label{eq: condensed L2-estimate approach}
    \begin{aligned}
      \|\errGlobTrace\|^2_{\facesUnion} &\le \sumT\|\errLocTrace\|_{\partial T}^2
      \\
      &\le 2 \sumT\left(
      \|\errLocTrace - \recLoc^k\errLocTrace\|_{\partial T}^2 + \|\recLoc^k\errLocTrace\|_{\partial T}^2
      \right)
      \\
      \overset{\eqref{eq: stabilization: trace - cell dof bound}, \eqref{eq:aNormGlob}, \eqref{eq: multiplicative trace inequality}}
      &\lesssim \sumT\left(
      \frac{h_T}{k}\aNormLoc{\errLocTrace}^2 + \|\recLoc^k\errLocTrace\|_T\|\nabla\recLoc^k\errLocTrace\|_T + h_T^{-1}\|\recLoc^k\errLocTrace\|_T^2
      \right)
      \\
      \overset{\eqref{eq: bound nabla cell dof in terms of energy norm}, \eqref{eq: mesh quasi uniformity assumption}}
      &\lesssim \frac{h}{k}\aNormGlob{\errGlobTrace}^2 + \sumT\|\recLoc^k\errLocTrace\|_T k^{\frac12} \aNormNonCondLoc{(\recLoc^k \errLocTrace, \errLocTrace)} + h^{-1}\|\recGlob^k\errGlobTrace\|_\Omega^2
      \\
      &\le \frac{h}{k}\aNormGlob{\errGlobTrace}^2 + k^{\frac12}\left(
      \sumT \|\recLoc^k\errLocTrace\|_T^2
      \right)^{\frac{1}{2}} \left(
      \sumT \aNormNonCondLoc{(\recLoc^k \errLocTrace, \errLocTrace)}^2 
      \right)^{\frac{1}{2}} + h^{-1}\|\recGlob^k\errGlobTrace\|_\Omega^2
      \\
      \overset{\eqref{eq:aNormGlob}}&= \frac{h}{k}\aNormGlob{\errGlobTrace}^2
      + k^{\frac12}\|\recGlob^k\errGlobTrace\|_\Omega \aNormGlob{\errGlobTrace}
      + h^{-1}\|\recGlob^k\errGlobTrace\|_\Omega^2,
    \end{aligned}
  \end{equation}
  where we used the inequality $(a + b)^2 \le 2 (a^2 + b^2)$ after adding and subtracting the local element component $\recLoc^k\errLocTrace$ in the second step, while,
  in the penultimate step, we used a Cauchy--Schwarz inequality on the sums.
  The energy error $\aNormGlob{\errGlobTrace}$ is bounded by Theorem \ref{theorem: condensed energy error estimate}.
  Thus, it only remains to bound the factor $\|\recGlob^k\errGlobTrace\|_\Omega = \|\recGlob^k\trialGlobTrace - \recGlob^k\projLTraceGlob u\|_\Omega$.
  Adding and subtracting the quantity $\vGlob f - \projLGlob u$,
  we get
  \begin{align*}
    \|\recGlob^k\errGlobTrace\|_\Omega \le \|\recGlob^k\trialGlobTrace + \vGlob f - \projLGlob u\|_\Omega + \|\recGlob^k\projLTraceGlob u +  \vGlob f - \projLGlob u\|_\Omega.
  \end{align*}
  Recalling \eqref{eq: condensed and non-condendensed solutions relation}, the first term in the right-hand side of the above expression is precisely the element $L^2$-error, which is bounded by \eqref{eq: theorem non-condensed L2 error estimate}.
  For the second term, we substitute $u|_T \overset{\eqref{eq: u|_T = u_T^P + u_T^H}}= u_T^P + u_T^H$ for all $T \in \mesh$ to write
  \[
  \begin{aligned}
    \|\recGlob^k\projLTraceGlob u +  \vGlob f - \projLGlob u\|_{\Omega}^2
    &= \sumT \|\recLoc^k\projLTraceLoc u +  \vLoc f - \projLLoc u_T^P - \projLLoc u_T^H\|_{T}^2
    \\
    &\le 2 \sumT \left(
    \|\recLoc^k\projLTraceLoc u - \projLLoc u_T^H\|_{T}^2 + \|\vLoc f - \projLLoc u_T^P\|_{T}^2
    \right)
    \\
    \overset{\eqref{eq: U_T^k L2 error estimate}, \eqref{eq: V_T^k L2 error estimate}}
    &\lesssim \errFac^{4}\sumT\left(
    \|u_T^H\|^2_{2, T} + \|f\|^2_{T}
    \right)
    \overset{\eqref{eq: u_T^H help bound}}\lesssim \errFac^{4}\|f\|_\Omega^2.\qedhere
  \end{aligned}
  \]
\end{proof}


\section{Multigrid analysis} \label{section: multigrid analysis}

Theorem \ref{theorem: convergence bound non-inherited W-cycle} will follow if the coarse problem approximates the fine problem sufficiently well. This is formalized by the \emph{regularity and approximation} criterion.
Recall the multigrid notation of Section \ref{section: multigrid algorithms}. For all $j=2,\ldots,\nLevel$, we define the operator $\projMGJMinus : \hhoSpaceTrace_h^{k_{j}} \rightarrow \hhoSpaceTrace_h^{k_{j-1}}$ such that, for all $\trialGlobTrace\in \hhoSpaceTrace_h^{k_{j}}$,
\begin{equation}
  \label{eq: P_kj for multigrid definition}
  a_h^{k_{j-1}}(\projMGJMinus \trialGlobTrace, \testGlobTrace) = a_h^{k_{j}}(\trialGlobTrace, \testGlobTrace) \qquad \forall \testGlobTrace\in \hhoSpaceTrace_h^{k_{j-1}}.
\end{equation}
The regularity and approximation criterion states that there exists $C>0$,
such that, for all $j=2,\ldots,\nLevel$ and all $\trialGlobTrace\in\hhoSpaceTrace_h^{k_j}$,
\begin{equation}
  \label{eq: reg and approx crit}
  |
  a_h^{k_j}\big(\trialGlobTrace - \projMGJMinus\trialGlobTrace, \trialGlobTrace\big)
  | \le C\frac{\| A_h^{k_j} \trialGlobTrace\|_{\facesUnion}^2}{\lambda^{k_j}},
\end{equation}
where $\lambda^{k_j}$ denotes the largest eigenvalue of $A_h^{k_j}$.

\begin{lem}[Approximation criterion]
  \label{lemma: reg and approx for non-inherited}
  For all $j=2,\ldots, \nLevel$, set
  \begin{equation}\label{eq: jump degrees}
    \sigma_j \coloneqq \frac{k_j}{k_{j-1}}.
  \end{equation}
  Then, there exists $\widetilde{C} > 0$ independent of $h, \nLevel$, and the $k_j$ such that criterion \eqref{eq: reg and approx crit} holds with $C = \max_j(\sigma_j^{\frac{5}{4}}k_j^{\frac{7}{4}})\widetilde{C}$.
\end{lem}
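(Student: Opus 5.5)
The criterion \eqref{eq: reg and approx crit} will follow from two ingredients: an upper bound $\lambda^{k_j}\lesssim k_j^2/h$ on the largest eigenvalue of $A_h^{k_j}$, and an approximation estimate of the form
\[
  |a_h^{k_j}(\trialGlobTrace - \projMGJMinus\trialGlobTrace, \trialGlobTrace)| \lesssim \sigma_j^{\frac54}\frac{h}{k_j^{\frac14}}\|A_h^{k_j}\trialGlobTrace\|_{\facesUnion}^2 .
\]
Multiplying and dividing by $\lambda^{k_j}$ then gives the constant $\sigma_j^{\frac54}k_j^{\frac74}$.

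\emph{Step 1: eigenvalue bound.} For $\testGlobTrace\in\hhoSpaceTrace_h^{k_j}$, $\recGlob^{k_j}$ is energy-minimizing, so $\aNormGlobM{\testGlobTrace}\le\aNormNonCondGlobM{(\widetilde v_h,\testGlobTrace)}$ for any cell component $\widetilde v_h$. I would choose $\widetilde v_h|_T=0$ and bound the resulting terms separately:
\begin{itemize}
  \item From \eqref{eq: gradHor definition} and \eqref{eq: discrete inverse trace inequality}, $\|\gradHorNoDeg^{k_j}(0,\testLocTrace)\|_T\lesssim k_j h_T^{-1/2}\|\testLocTrace\|_{\partial T}$.
  \item The stabilization contributes $\frac{k_j}{h_T}\|\cdot\|_{\partial T}^2$ of $\testLocTrace$ minus projections of $\hor(0,\testLocTrace)$. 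Using \eqref{eq: stability of L2-projection on local traces} and the discrete trace inequality (which also controls $\|\projLLoc p\|_{\partial T}$ through Poincaré plus inverse estimates), this is $\lesssim k_j^2h_T^{-1}\|\testLocTrace\|_{\partial T}^2$.
\end{itemize}
Summing over $T$ with quasi-uniformity gives $\lambda^{k_j}\lesssim k_j^2/h$.

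\emph{Step 2: duality for the approximation estimate.} Fix $\trialGlobTrace$ and set $\phi_h:=A_h^{k_j}\trialGlobTrace$. By \eqref{eq: definition varphi_h}, $\trialGlobTrace$ is the condensed HHO solution of degree $k_j$ with a discrete source. I would lift it to an $L^2(\Omega)$ source $f$ such that $(f,\recGlob^{k_j}\testGlobTrace)_\Omega=(\phi_h,\testGlobTrace)_\facesUnion$. A natural choice is to take $f\in\IP^{k_j}(\mesh)$ in the range of the adjoint of $\recGlob^{k_j}$, with $\|f\|_\Omega$ controlled by $\|\phi_h\|_\facesUnion$ times a factor such as $(k_j/h)^{1/2}$; this is where the trace-to-cell scaling enters.

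The key identity is
\[
  a_h^{k_j}(\trialGlobTrace-\projMGJMinus\trialGlobTrace,\trialGlobTrace)=(\phi_h,\trialGlobTrace)_\facesUnion-(\Pi^{k_{j-1}}_\faces\phi_h,\projMGJMinus\trialGlobTrace)_\facesUnion .
\]
Both terms compare with the continuous solution $u$ of \eqref{eq: continuous Poisson problem} for this $f$:
\begin{itemize}
  \item $\trialGlobTrace$ approximates $\projLTraceGlobM u$ in energy with error $\lesssim \frac{h}{k_j}\|f\|_\Omega$ by Theorem \ref{theorem: condensed energy error estimate}.
  \item $\projMGJMinus\trialGlobTrace$ is a degree-$k_{j-1}$ condensed HHO solution, but with a perturbed right-hand side, since $\recGlob^{k_{j-1}}\neq\recGlob^{k_j}$ on coarse functions. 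Its error is therefore controlled by Theorem \ref{theorem: condensed energy error estimate} at degree $k_{j-1}$ plus a consistency term. That consistency term should be bounded using \eqref{eq: U_T^k L2 error estimate} at both degrees together with the skeletal estimate of Theorem \ref{theorem: condensed L2-error estimate}.
\end{itemize}
Converting the errors via Cauchy--Schwarz, with the energy error multiplied by $\|u\|_{2,\Omega}\lesssim\|f\|_\Omega$, yields a bound of the form $(h/k_{j-1})\cdot k_{j-1}^{1/4}(h/k_{j-1})^{1/2}\cdots\|\phi_h\|^2$. Here Theorem \ref{theorem: condensed L2-error estimate} supplies the $k^{\frac14}(h/k)^{3/2}$ factor. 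Rewriting $k_{j-1}=k_j/\sigma_j$ produces the powers $\sigma_j^{5/4}$ and $k_j^{-1/4}$.

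\emph{Main obstacle.} The hardest step is the bookkeeping that converts $\|\phi_h\|_\facesUnion$ into the $L^2(\Omega)$ source needed by Theorems \ref{theorem: condensed energy error estimate}--\ref{theorem: condensed L2-error estimate}. The non-inherited coarse form also uses different condensation operators, which creates a right-hand side inconsistency between levels. I would try to absorb this inconsistency by writing $\phi_h$ as the skeletal trace of $\nabla u\cdot\normalT$ jumps plus a cell term, and estimating each piece with Lemma \ref{lemma: bound grad vT in terms of energy} and \eqref{eq: multiplicative trace inequality}, as in the proof of Theorem \ref{theorem: condensed L2-error estimate}. The $\sqrt{k}$ loss in Lemma \ref{lemma: bound grad vT in terms of energy}, square-rooted through the multiplicative trace inequality, is what I expect to fix the exponent $\frac14$.
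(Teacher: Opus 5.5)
Your reduction to \eqref{eq: desired bound for algorithm 1} through $\lambda^{k_j}\lesssim k_j^2/h$ matches the paper. The gap is in Step~2, which hinges on an exact lift of $\phi_h=A_h^{k_j}\mu_h$ to a source $f\in L^2(\Omega)$ with $(f,U_h^{k_j}\nu_h)_\Omega=(\phi_h,\nu_h)_{\Gamma_h}$ for all $\nu_h\in\mathcal{M}_h^{k_j}$. Such an $f$ does not exist in general. Since $U_h^{k_j}\nu_h\in\mathbb{P}^{k_j}(\mathcal{T}_h)$, the functional $\nu_h\mapsto(f,U_h^{k_j}\nu_h)_\Omega$ vanishes on $\ker U_h^{k_j}$ for every $f$. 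The functional $\nu_h\mapsto(A_h^{k_j}\mu_h,\nu_h)_{\Gamma_h}$ does not vanish there for suitable $\mu_h$, because $A_h^{k_j}$ is onto. This kernel is nontrivial whenever there are more face than cell unknowns, e.g.\ on fine tetrahedral meshes with $k_j=2$ (roughly $12$ face versus $10$ cell unknowns per element). Even where $U_h^{k_j}$ is injective, your bound on $\|f\|_\Omega$ would need a $k$-uniform lower bound of $\|U_h^{k_j}\nu_h\|_\Omega$ in terms of $\|\nu_h\|_{\Gamma_h}$, which none of the available tools provide.

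The missing idea is an \emph{approximate} lift whose consistency error is controlled by the stabilization. The paper uses the broken divergence of the Raviart--Thomas field of Lemma~\ref{lemma: Raviart--Thomas element FMuLoc} on the simplicial submesh, with normal moments $\frac{1}{2}E_hA_h^{k_j}\mu_h$. Then $(\nabla_h\cdot\boldsymbol{F}_h^{k_j+1}(\mu_h),U_h^{k_j}\nu_h)_\Omega=\frac{1}{2}\sum_{T\in\mathcal{T}_h}(A_h^{k_j}\mu_h,U_T^{k_j}\nu_T)_{\partial T}$, while $(A_h^{k_j}\mu_h,\nu_h)_{\Gamma_h}=\frac{1}{2}\sum_{T\in\mathcal{T}_h}(A_h^{k_j}\mu_h,\nu_T)_{\partial T}$. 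The mismatch is bounded via \eqref{eq: stabilization: trace - cell dof bound}, giving \eqref{eq: energy error aux rhs}. A separate duality argument upgrades this to $\|\mu_h-\widetilde{\mu}_h\|_{\Gamma_h}\lesssim hk_j^{-5/8}\|A_h^{k_j}\mu_h\|_{\Gamma_h}$. The source obeys \eqref{eq: L2 bound aux rhs}, so it scales like $k_jh^{-1/2}$, not like your guessed $(k_j/h)^{1/2}$. The exponents in \eqref{eq: desired bound for algorithm 1} come from $k_{j-1}^{1/4}(h/k_{j-1})^{3/2}\cdot k_jh^{-1/2}=\sigma_j^{5/4}hk_j^{-1/4}$; your factor would give $k_j^{-3/4}$, so your ingredients do not produce the powers you announce.

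The second gap is the inter-level inconsistency you flag yourself. Your remedy via \eqref{eq: U_T^k L2 error estimate} does not apply. That estimate concerns $U_T^k\Pi_{\mathcal{F}_T}^k\gamma$ with $\gamma$ the trace of an $H^2(T)$ function and a face projection of the same degree $k$. It says nothing about arbitrary discrete test functions in $\mathcal{M}_h^{k_{j-1}}$, for which no estimate of $U_h^{k_j}\nu_h-U_h^{k_{j-1}}\nu_h$ with the required $k$-dependence is available.

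The paper avoids the inconsistency altogether. By \eqref{eq: A^k P^k is proj of A^n}, $\Pi_{\mathcal{F}_h}^{k_{j-1}}A_h^{k_j}\mu_h=A_h^{k_{j-1}}P^{k_{j-1}}\mu_h$. Since $(\nabla_h\cdot\boldsymbol{F}_h^{k_j+1}(\mu_h),v_T)_T=\frac{1}{2}(A_h^{k_j}\mu_h,v_T)_{\partial T}$ for all $v_T\in\mathbb{P}^{k_{j-1}}(T)$, this yields \eqref{eq: projection property of RT rhs}. As $U_h^{k_{j-1}}\nu_h\in\mathbb{P}^{k_{j-1}}(\mathcal{T}_h)$, the fine and coarse auxiliary fields are genuine condensed HHO solutions of degrees $k_j$ and $k_{j-1}$ with the \emph{same} source. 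Theorem~\ref{theorem: condensed L2-error estimate} therefore applies at both levels. The term $\|P^{k_{j-1}}\mu_h-\widetilde{P^{k_{j-1}}\mu_h}\|_{\Gamma_h}$ is handled like $\|\mu_h-\widetilde{\mu}_h\|_{\Gamma_h}$, using $\|A_h^{k_{j-1}}P^{k_{j-1}}\mu_h\|_{\Gamma_h}\le\|A_h^{k_j}\mu_h\|_{\Gamma_h}$. The proof closes with $a_h^{k_j}(\mu_h-P^{k_{j-1}}\mu_h,\mu_h)\le\|\mu_h-P^{k_{j-1}}\mu_h\|_{\Gamma_h}\|A_h^{k_j}\mu_h\|_{\Gamma_h}$ and a triangle inequality over these three differences.

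A smaller point concerns Step~1. The discrete trace inequality combined with Poincaré only bounds the stabilization by $k_j^5/h_T$ times $\|\mu_T\|_{\partial T}^2$; combined with \eqref{eq: approximation L2-projection on cell} instead, it gives $k_j^3/h_T$. Either would spoil the exponent $\frac{7}{4}$. You need \eqref{eq: approximation L2-projection on trace FIRST ORDER} applied to $p-\Pi_T^{k_j}p$ with $p=p_T^{k_j+1}(0,\mu_T)$. This gives $\frac{k_j}{h_T}\cdot\frac{h_T}{k_j}\|\nabla p\|_T^2\le\|\boldsymbol{G}_T^{k_j}(0,\mu_T)\|_T^2$.
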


\begin{proof}[Proof of Theorem \ref{theorem: convergence bound non-inherited W-cycle}]
  Assuming Lemma \ref{lemma: reg and approx for non-inherited}, the result follows applying \cite[Theorem 7]{Bramble.Pasciak.ea:91}.
\end{proof}

The rest of this section is devoted to proving Lemma \ref{lemma: reg and approx for non-inherited}.

\subsection{Estimate of the largest eigenvalue}

\begin{lem}[Estimate of the largest eigenvalue]
  \label{lemma: largest eigenvalues for multigrid}
  For any $j=1\ldots,\nLevel$, we have
    \begin{equation}\label{eq: largest eigenvalues for multigrid}
      \lambda^{k_j} \lesssim \frac{k_j^2}{h}.
    \end{equation}
\end{lem}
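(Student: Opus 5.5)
Since $A_h^{k_j}$ is symmetric positive definite with respect to $(\cdot,\cdot)_{\facesUnion}$, its largest eigenvalue is the maximum of the Rayleigh quotient:
\[
\lambda^{k_j} = \max_{\testGlobTrace\in\hhoSpaceTrace_h^{k_j}\setminus\{0\}} \frac{a_h^{k_j}(\testGlobTrace,\testGlobTrace)}{\|\testGlobTrace\|_{\facesUnion}^2}.
\]
So it suffices to prove $\aNormGlobM{\testGlobTrace}^2 \lesssim \frac{k_j^2}{h}\|\testGlobTrace\|_{\facesUnion}^2$. Write $k=k_j$ for brevity. The key observation is the minimization property of static condensation: $\recLoc^k\testLocTrace$ is defined by \eqref{eq: U_T^k definition} as the $\aNonCondLoc$-orthogonal correction. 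Hence $(\recLoc^k\testLocTrace,\testLocTrace)$ minimizes $\aNormNonCondLoc{(w_T,\testLocTrace)}$ over $w_T\in\hhoSpaceCell_T^k$. We therefore have $\aNormLoc{\testLocTrace}\le \aNormNonCondLoc{(0,\testLocTrace)}$, and it remains to bound the latter locally by $\frac{k}{h_T^{1/2}}\|\testLocTrace\|_{\partial T}$. Summing over $T$, using quasi-uniformity \eqref{eq: mesh quasi uniformity assumption}, and noting that each face is shared by at most two elements, so that $\sumT\|\testLocTrace\|_{\partial T}^2\le 2\|\testGlobTrace\|_{\facesUnion}^2$, then gives the claim.

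\emph{Consistency term.} Take $\genericVec=\gradHor(0,\testLocTrace)$ in \eqref{eq: gradHor definition}, use Cauchy--Schwarz, and apply the discrete trace inequality \eqref{eq: discrete inverse trace inequality} componentwise to $\genericVec\in\IP^k(T)^d$. This gives
\[
\|\gradHor(0,\testLocTrace)\|_T\lesssim \frac{k}{h_T^{1/2}}\|\testLocTrace\|_{\partial T}.
\]
By \eqref{eq: L2-norm hor <= L2-norm gradHor}, the same bound holds for $\|\nabla\hor(0,\testLocTrace)\|_T$.

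\emph{Stabilization term.} Set $p\coloneqq\hor(0,\testLocTrace)$. The difference operators \eqref{eq:difference operators} give
\[
\delta_{\partial T}^k-\delta_T^k=\testLocTrace-\projLTraceLoc p+\projLLoc p = \testLocTrace-\projLTraceLoc\big(p-\projLLoc p\big),
\]
using that $\projLLoc p|_{\partial T}\in\IP^k(\facesLoc)$, exactly as in the proof of Lemma \ref{lem: stabilization consistency}. Using \eqref{eq: stability of L2-projection on local traces} and then \eqref{eq: approximation L2-projection on trace FIRST ORDER}, we get
\[
\|\delta_{\partial T}^k-\delta_T^k\|_{\partial T}\le\|\testLocTrace\|_{\partial T}+\errFacLoc^{1/2}\|\nabla p\|_T .
\]
Multiplying by the weight $(k/h_T)^{1/2}$ and inserting the consistency bound yields
\[
\sNormNonCondLoc{(0,\testLocTrace)}\lesssim \Big(\frac{k}{h_T}\Big)^{1/2}\|\testLocTrace\|_{\partial T}+\frac{k}{h_T^{1/2}}\|\testLocTrace\|_{\partial T}\lesssim \frac{k}{h_T^{1/2}}\|\testLocTrace\|_{\partial T},
\]
since $k\ge1$.

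The main (mild) obstacle is to avoid losing powers of $k$. One must not bound $p$ via trace inverse estimates in degree $k+1$ on $p-\projLLoc p$, which would give an extra factor. The first-order trace approximation \eqref{eq: approximation L2-projection on trace FIRST ORDER} is what keeps the stabilization contribution at order $k/h^{1/2}$. The other point is to justify the Rayleigh-quotient characterization and the energy-minimization property of $\recLoc^k$, which follows directly from \eqref{eq: U_T^k definition} by expanding $\aNonCondLoc$ of a sum.
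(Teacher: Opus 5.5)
Your proof is correct and matches the paper's argument step by step. You reduce via the Rayleigh quotient to bounding the local non-condensed energy of $(0,\mu_T)$, bound $\gradHor(0,\mu_T)$ from its definition and the discrete trace inequality, and control the stabilization through \eqref{eq: stability of L2-projection on local traces}, \eqref{eq: approximation L2-projection on trace FIRST ORDER} and \eqref{eq: L2-norm hor <= L2-norm gradHor}. The only difference is cosmetic: you use the $\aNonCondLoc$-orthogonality that defines $\recLoc^k$ to get the minimization property with constant one, whereas the paper bounds $(\recLoc^{k_j}\mu_T,0)$ by $(0,\mu_T)$ in energy via Cauchy--Schwarz and then applies a triangle inequality, losing a harmless factor of two.
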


\begin{proof}
  For any $\trialGlobTrace\in\hhoSpaceTrace_h^{k_j}$, we bound $\|\trialGlobTrace\|_{a_h^{k_j}}^2$ (see \eqref{eq:aNormGlob}) in terms of $\|\trialGlobTrace\|_\facesUnion^2$. We have, for all $T \in \mesh$,
  \[
  \begin{aligned}
    \aNormNonCondLocM{(\recLoc^{k_j}\trialLocTrace, 0)}^2
    \overset{\eqref{eq:non condensed norms}}&=
    \aNonCondLocArgM{(\recLoc^{k_j}\trialLocTrace, 0)}{(\recLoc^{k_j}\trialLocTrace, 0)}
    \\
    \overset{\eqref{eq: U_T^k definition}}&=
      -\aNonCondLocArgM{(0, \trialLocTrace)}{(\recLoc^{k_j}\trialLocTrace, 0)}
      \le \aNormNonCondLocM{(0, \trialLocTrace)}
      \aNormNonCondLocM{(\recLoc^{k_j}\trialLocTrace, 0)},
  \end{aligned}
  \]
  where we applied a Cauchy--Schwarz inequality on $\aNonCondLocM$ to conclude.
  Simplifying, this yields $\aNormNonCondLocM{(\recLoc^{k_j}\trialLocTrace, 0)} \le \aNormNonCondLocM{(0, \trialLocTrace)}$,
  and, squaring, summing over $T \in \mesh$, and then taking the square root of the resulting inequality, $\aNormNonCondGlobM{(\recGlob^{k_j}\trialGlobTrace, 0)} \le \aNormNonCondGlobM{(0, \trialGlobTrace)}$.
  Hence, we can write
  \begin{equation}\label{eq: largest eigenvalue:basic}
    \begin{aligned}
      \aNormGlobM{\trialGlobTrace}
      \overset{\eqref{eq:aNormGlob}}&=
      \aNormNonCondGlobM{(\recGlob^{k_j} \trialGlobTrace, \trialGlobTrace)}
      \\
      &\le
      \aNormNonCondGlobM{(\recGlob^{k_j} \trialGlobTrace, 0)} + \aNormNonCondGlobM{(0, \trialGlobTrace)}
      \\
      &\lesssim \aNormNonCondGlobM{(0, \trialGlobTrace)}
      = \left(
      \sumT \aNormNonCondLocM{(0, \trialLocTrace)}^2
      \right)^{\frac12},
    \end{aligned}
  \end{equation}
  where we used a triangle inequality in the second step.
  Thus, it is enough, for any $T\in\mesh$, to bound
  \begin{equation}\label{eq: largest eigenvalue.aNormNonCondLoc}
    \aNormNonCondLocM{(0, \trialLocTrace)}^2 \overset{\eqref{eq:non condensed norms}, \eqref{eq:aNonCondGlob}}= \|\gradHorNoDeg^{k_j}(0, \trialLocTrace)\|_T^2 + \sNormNonCondLocM{(0, \trialLocTrace)}^2.
  \end{equation}
  We have
  \begin{align*}
    \|\gradHorNoDeg^{k_j}(0, \trialLocTrace)\|_T^2
    \overset{\eqref{eq: gradHor definition}} = \big(\trialLocTrace, \gradHorNoDeg^{k_j}(0, \trialLocTrace)\cdot\normalT\big)_{\partial T} \le \|\trialLocTrace\|_{\partial T}\|\gradHorNoDeg^{k_j}(0, \trialLocTrace)\|_{\partial T} \overset{\eqref{eq: discrete inverse trace inequality}} \lesssim \|\trialLocTrace\|_{\partial T} \frac{k_j}{h_T^{\frac{1}{2}}}\|\gradHorNoDeg^{k_j}(0, \trialLocTrace)\|_{T},
  \end{align*}
  where we used a Cauchy--Schwarz inequality in the second step.
  Simplifying and squaring, we get
  \begin{equation}\label{eq: largest eigenvalue.estimate G}
    \|\gradHorNoDeg^{k_j}(0, \trialLocTrace)\|_T^2\lesssim \frac{k_j^2}{h_T} \|\trialLocTrace\|_{\partial T}^2.
  \end{equation}
  For the second term in the right-hand side of \eqref{eq: largest eigenvalue.aNormNonCondLoc}, we write
  \begin{equation}\label{eq: largest eigenvalue.estimate stab}
    \begin{aligned}
      \sNormNonCondLocM{(0, \trialLocTrace)}^2
      \overset{\eqref{eq: standard HHO stabilization}, \eqref{eq:difference operators}}
      &= \frac{k_j}{h_T}\|\trialLocTrace - \projLTraceLocM\horNoDeg^{k_j + 1}(0, \trialLocTrace) + \projLLocM\horNoDeg^{k_j + 1}(0, \trialLocTrace)\|_{\partial T}^2
      \\
      &\lesssim \frac{k_j}{h_T}\left(
      \|\trialLocTrace\|_{\partial T}^2 + \|\projLTraceLocM \big(
      \horNoDeg^{k_j + 1}(0, \trialLocTrace) - \projLLocM\horNoDeg^{k_j + 1}(0, \trialLocTrace)
      \big)
      \|_{\partial T}^2
      \right)
      \\
      \overset{\eqref{eq: stability of L2-projection on local traces}, \eqref{eq: approximation L2-projection on trace FIRST ORDER}}
      &\lesssim \frac{k_j}{h_T}\left(
      \|\trialLocTrace\|_{\partial T}^2 + \frac{h_T}{k_j}\|\nabla\horNoDeg^{k_j + 1}(0, \trialLocTrace)\|_{T}^2
      \right)
      \\
      \overset{\eqref{eq: L2-norm hor <= L2-norm gradHor}}
      &\le \frac{k_j}{h_T}\|\trialLocTrace\|_{\partial T}^2 + \|\gradHorNoDeg^{k_j}(0, \trialLocTrace)\|_{T}^2
      \overset{\eqref{eq: largest eigenvalue.estimate G}}\lesssim \frac{k_j^2}{h_T}\|\trialLocTrace\|_{\partial T}^2,
    \end{aligned}
  \end{equation}
  where we have additionally used the fact that $k_j \ge 1$ in the conclusion.
  Using \eqref{eq: largest eigenvalue.estimate G} and \eqref{eq: largest eigenvalue.estimate stab} to estimate the right-hand side of \eqref{eq: largest eigenvalue.aNormNonCondLoc}, summing the resulting inequality over $T \in \mesh$, and plugging it into \eqref{eq: largest eigenvalue:basic}, we finally get
  \[
  \aNormGlobM{\trialGlobTrace}^2
  \lesssim \sum_{T \in \mesh} \frac{k_j^2}{h_T}\|\trialLocTrace\|_{\partial T}^2
  \overset{\eqref{eq: mesh quasi uniformity assumption}}{
    \lesssim} \frac{k_j^2}{h}\|\trialGlobTrace\|_\facesUnion^2. \qedhere
  \]
\end{proof}

\subsection{Derivation of the approximation criterion} \label{section: multigrid analysis non-inherited W-cycle}

In light of Lemma \ref{lemma: largest eigenvalues for multigrid},  Lemma \ref{lemma: reg and approx for non-inherited} is proved if, for any $j\in\{2,\ldots,\nLevel\}$, it holds, for all $\trialGlobTrace\in\hhoSpaceTrace_h^{k_j}$,
\begin{equation}
  \label{eq: desired bound for algorithm 1}
  |a_h^{k_j}\big(\trialGlobTrace - \projMGJMinus\trialGlobTrace, \trialGlobTrace\big)| \lesssim \frac{\sigma_j^{\frac{5}{4}}h}{k_j^{\frac{1}{4}}} \|A_h^{k_j}\trialGlobTrace\|_{\facesUnion}^2,
\end{equation}
as then
\[
|a_h^{k_j}\big(\trialGlobTrace - \projMGJMinus\trialGlobTrace, \trialGlobTrace\big)|
\overset{\eqref{eq: largest eigenvalues for multigrid}}\lesssim
\sigma_j^{\frac{5}{4}}k_j^{\frac{7}{4}} \frac{\|A_h^{k_j}\trialGlobTrace\|_{\facesUnion}^2}{\lambda^{k_j}}.
\]
and \eqref{eq: reg and approx crit} will be satisfied with $C = \max_j(\sigma_j^{\frac{5}{4}} k_j^{\frac{7}{4}})\widetilde{C}$, where $\widetilde{C}$ has the same dependencies as the hidden constant in $\lesssim$ (see Section \ref{sec: setting: inequalities}).
For any $\trialGlobTrace\in\hhoSpaceTrace_h^{k_j}$, we shall introduce an auxiliary element $\tildeTrialGlobTrace\in\hhoSpaceTrace_h^{k_j}$ which is close to $\trialGlobTrace$ but which solves an appropriate Poisson problem. Then, the error analysis of Section \ref{section: hho main results} can be used to bound the left hand-side of \eqref{eq: desired bound for algorithm 1}.

Let $\lDeg\ge 1$ be a polynomial degree. Recall the simplicial submesh $(\submesh, \subfaces)$ of $(\mesh, \faces)$ from Section \ref{section: mesh}.
For any $S\in\submesh$, we set $\mathcal{G}_S \coloneqq \{G\in\subfaces : G \subset \partial S\}$ and we define $\Pi_{\mathcal{G}_S}^\lDeg : L^2(\partial S)\rightarrow \IP^\lDeg(\mathcal{G}_S)$ (with $\IP^\lDeg(\mathcal{G}_S)$ broken polynomial space of degree $\lDeg$ on $\partial T$) such that, for any $v\in L^2(\partial S)$,
\[
(\Pi_{\mathcal{G}_S}^\lDeg v, w )_{\partial S} = (v, w)_{\partial S} \qquad \forall w\in \IP^\lDeg(\mathcal{G}_S).
\]
We define an extension $\E: \hhoSpaceTrace_h^\lDeg \rightarrow \IP^\lDeg(\subfaces)$ by setting, for all $\testGlobTrace\in \hhoSpaceTrace_h^\lDeg$ and all $G\in\subfaces$,
\begin{equation}\label{eq: extension of traces}
  \E\testGlobTrace|_G \coloneqq
  \begin{cases}
    \testGlobTrace|_G & \text{if there exists $F \in \faces$ such that $G\subseteq F$},
    \\
    0 & \text{otherwise.}
  \end{cases}
\end{equation}

\begin{lem}[Raviart--Thomas lifting]
  \label{lemma: Raviart--Thomas element FMuLoc}
  For any $\trialGlobTrace\in\hhoSpaceTrace_h^{\lDeg}$, there exists a vector field $\FMuGlob{\lDeg+1}\in\IP^{\lDeg + 1}(\submesh)^d$ such that, for all $S\in\submesh$, and letting $\FMuLoc{\lDeg+1} \coloneqq \FMuGlob{\lDeg+1}|_S$,
  \begin{subequations}\label{eq: FMuLoc}
    \begin{alignat}{2}\label{eq: FMuLoc condition in S}
      (\FMuLoc{\lDeg+1}, \nabla v_S)_S &= 0 &\quad& \forall v_S\in\IP^\lDeg(S),
      \\ \label{eq: FMuLoc condition on the boundary of S}
      (\FMuLoc{\lDeg+1} \cdot\normalS, \nu_S)_{\partial S} &= \frac{1}{2}\big(\E A_h^\lDeg\trialGlobTrace, \nu_S\big)_{\partial S} &\quad& \forall \nu_S\in\IP^\lDeg(\mathcal{G}_S).
    \end{alignat}
  \end{subequations}
\end{lem}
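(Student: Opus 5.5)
The plan is to construct $\FMuGlob{\lDeg+1}$ one simplex at a time with the Raviart--Thomas element of index $\lDeg$ on each $S\in\submesh$, and then glue the local fields together. Since $\FMuGlob{\lDeg+1}$ only has to lie in the \emph{broken} space $\IP^{\lDeg+1}(\submesh)^d$, no normal continuity across subfaces is needed. The conditions \eqref{eq: FMuLoc} are also purely local on each $S$. So it suffices to prove, for a fixed $S\in\submesh$, that some $\FMuLoc{\lDeg+1}\in\IP^{\lDeg+1}(S)^d$ satisfies \eqref{eq: FMuLoc condition in S}--\eqref{eq: FMuLoc condition on the boundary of S}.

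\emph{Step 1: the boundary datum is admissible.} Since $A_h^\lDeg\trialGlobTrace\in\hhoSpaceTrace_h^\lDeg$, the extension \eqref{eq: extension of traces} gives $\E A_h^\lDeg\trialGlobTrace\in\IP^\lDeg(\subfaces)$. Indeed, on each subface $G\subseteq F$ it is the restriction of a polynomial of degree $\le\lDeg$ on the planar face $F$, and elsewhere it is zero. Hence $g_S\coloneqq\frac12\Pi^\lDeg_{\mathcal G_S}\big(\E A_h^\lDeg\trialGlobTrace\big)|_{\partial S}$ coincides with $\frac12\E A_h^\lDeg\trialGlobTrace|_{\partial S}$ and belongs to $\IP^\lDeg(\mathcal G_S)$.

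\emph{Step 2: Raviart--Thomas unisolvence.} I will use the space $\mathrm{RT}^\lDeg(S)\coloneqq\IP^\lDeg(S)^d+\boldsymbol x\,\IP^\lDeg(S)\subset\IP^{\lDeg+1}(S)^d$. A standard result is that the following degrees of freedom are unisolvent on it:
\begin{itemize}
\item the normal moments $\boldsymbol\tau\mapsto(\boldsymbol\tau\cdot\normalS,\nu)_G$ for $\nu\in\IP^\lDeg(G)$ and $G\in\mathcal G_S$;
\item the interior moments $\boldsymbol\tau\mapsto(\boldsymbol\tau,\boldsymbol q)_S$ for $\boldsymbol q\in\IP^{\lDeg-1}(S)^d$.
\end{itemize}
These functionals are independent, so any data can be prescribed and no compatibility condition is required. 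I then let $\FMuLoc{\lDeg+1}$ be the unique element of $\mathrm{RT}^\lDeg(S)$ whose normal moments equal $(g_S,\nu)_G$ and whose interior moments all vanish.

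\emph{Step 3: verification.} Condition \eqref{eq: FMuLoc condition on the boundary of S} follows by summing the face moments over $G\in\mathcal G_S$, since $\IP^\lDeg(\mathcal G_S)$ is the direct sum of the $\IP^\lDeg(G)$. For \eqref{eq: FMuLoc condition in S}, note that $\nabla v_S\in\IP^{\lDeg-1}(S)^d$ for every $v_S\in\IP^\lDeg(S)$, so the vanishing interior moments give $(\FMuLoc{\lDeg+1},\nabla v_S)_S=0$. Finally, I set $\FMuGlob{\lDeg+1}|_S\coloneqq\FMuLoc{\lDeg+1}$ for all $S\in\submesh$.

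There is essentially no hard step here. The one point to check is Step 1, namely that the piecewise definition of $\E$ on the submesh faces yields polynomials of degree $\le\lDeg$ on each $G$. This holds because every mesh face is planar and each subface either lies in a mesh face or is interior to an element.
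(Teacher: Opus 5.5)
Your proposal is correct and is the same argument as the paper's. On each $S\in\submesh$ you take the Raviart--Thomas field in $\IP^{\lDeg+1}(S)^d$ with vanishing interior moments against $\IP^{\lDeg-1}(S)^d$ (the paper's condition $\boldsymbol\Pi_S^{\lDeg-1}\FMuLoc{\lDeg+1}=0$) and with normal moments equal to those of $\frac12\E A_h^\lDeg\trialGlobTrace$ (the paper's condition $\Pi^\lDeg_{\mathcal G_S}\FMuLoc{\lDeg+1}\cdot\normalS=\frac12(\E A_h^\lDeg\trialGlobTrace)|_{\partial S}$), and you then glue these fields in the broken space. Your Step 1 is harmless but unnecessary, since prescribing moments against $\IP^\lDeg(G)$ only requires the datum to lie in $L^2(\partial S)$.
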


\begin{proof}
  We can, for all $S\in\submesh$, define $\FMuLoc{\lDeg+1}\in\IP^{\lDeg + 1}(S)^d$ to be the local Raviart--Thomas element of polynomial degree $\lDeg+1$ (see \cite{Raviart.Thomas:77,Nedelec:80} and also \cite{Gatica:14}), which is uniquely defined by the conditions
  \[
  \boldsymbol \Pi_S^{\lDeg - 1}\FMuLoc{\lDeg+1} = 0,\qquad
  \Pi_{\mathcal{G}_S}^\lDeg\FMuLoc{\lDeg+1}\cdot\normalS =
  \frac12 (\E A_h^\lDeg\trialGlobTrace)|_{\partial S}.\qedhere
  \]
\end{proof}

  \begin{rem}[Global regularity of the lifting]
    Notice that the lifting $\FMuGlob{\lDeg+1}$ defined inside each $S \in \submesh$ by \eqref{eq: FMuLoc} is not in $H(\operatorname{div}; \Omega)$ because, although the right-hand side of condition \eqref{eq: FMuLoc condition on the boundary of S} is single-valued on the mesh skeleton, this condition prescribes the normal component of $\FMuLoc{\lDeg+1}$ relative to $S$ (and not to each simplicial face).
  \end{rem}

Let $\nabla_h\cdot : H^1(\submesh)^d\rightarrow L^2(\Omega)$ denote the broken divergence operator on $\submesh$, which restricted to each $S\in\submesh$ acts as the usual divergence operator.

\begin{lem}[Approximation properties of $\tildeTrialGlobTrace$]
  Given $\trialGlobTrace\in\hhoSpaceTrace_h^{\lDeg}$, $\lDeg \ge 1$, let $\tildeTrialGlobTrace\in\hhoSpaceTrace_h^{\lDeg}$ be the unique solution to the equation
  \begin{equation}
    \label{eq: aux variable tilde trial glob trace definition}
    a_h^\lDeg(\tildeTrialGlobTrace, \testGlobTrace) = (\nabla_h\cdot\FMuGlob{\lDeg+1}, \recGlob^\lDeg\testGlobTrace)_\Omega \qquad \forall \testGlobTrace\in \hhoSpaceTrace_h^\lDeg.
  \end{equation}
  Then, it holds:
  \begin{align}
    \label{eq: energy error aux rhs}
    \aNormGlobNoDeg{\lDeg}{\trialGlobTrace - \tildeTrialGlobTrace} \lesssim \errFacL^{\frac{1}{2}}\|A_h^\lDeg\trialGlobTrace\|_\facesUnion,\\
    \label{eq: L2 bound aux rhs}
    \|\nabla_h\cdot\FMuGlob{\lDeg+1}\|_\Omega \lesssim \frac{\lDeg}{h^{\frac{1}{2}}}\|A_h^\lDeg\trialGlobTrace\|_\facesUnion.
  \end{align}
\end{lem}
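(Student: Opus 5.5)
The approach is to treat the two estimates separately, starting with \eqref{eq: L2 bound aux rhs}, which is purely local. On each simplex $S\in\submesh$, the field $\FMuLoc{\lDeg+1}$ is the Raviart--Thomas element determined by a vanishing interior moment of degree $\lDeg-1$ and by normal moments $\frac12 \E A_h^\lDeg\trialGlobTrace$ on $\partial S$. By a scaling argument combined with a polynomial-degree-robust stability bound for this RT lifting, we expect
\[
\|\nabla\cdot\FMuLoc{\lDeg+1}\|_S \lesssim \frac{\lDeg}{h_S^{\frac12}}\|\E A_h^\lDeg\trialGlobTrace\|_{\partial S}.
\]
One way to get this is to test $\nabla\cdot\FMuLoc{\lDeg+1}$ against $q\in\IP^{\lDeg}(S)$ and integrate by parts. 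Condition \eqref{eq: FMuLoc condition in S} kills the volume term, so
\[
(\nabla\cdot \FMuLoc{\lDeg+1}, q)_S = \tfrac12(\E A_h^\lDeg\trialGlobTrace, q)_{\partial S}.
\]
This uses that $q|_{\partial S}\in\IP^\lDeg(\mathcal G_S)$ together with \eqref{eq: FMuLoc condition on the boundary of S}. Applying the discrete trace inequality \eqref{eq: discrete inverse trace inequality} on $S$ then gives $\|\Pi^\lDeg_S\nabla\cdot\FMuLoc{\lDeg+1}\|_S\lesssim \lDeg h_S^{-1/2}\|\E A_h^\lDeg \trialGlobTrace\|_{\partial S}$. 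It remains to check that $\nabla\cdot\FMuLoc{\lDeg+1}\in\IP^\lDeg(S)$; this holds for RT fields, which is why the RT space of the appropriate index is used. Summing over $S$, using quasi-uniformity and the fact that each skeletal face is shared by at most two simplices, yields \eqref{eq: L2 bound aux rhs}.

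For \eqref{eq: energy error aux rhs}, set $\eta_h\coloneqq\trialGlobTrace-\tildeTrialGlobTrace$. By \eqref{eq: operator corresponding to general bilinear form level j},
\[
a_h^\lDeg(\eta_h,\testGlobTrace)=(A_h^\lDeg\trialGlobTrace,\testGlobTrace)_\facesUnion-(\nabla_h\cdot\FMuGlob{\lDeg+1},\recGlob^\lDeg\testGlobTrace)_\Omega .
\]
We integrate the second term by parts on each $S$. The volume term $(\FMuLoc{\lDeg+1},\nabla \recGlob^\lDeg\testGlobTrace)_S$ vanishes by \eqref{eq: FMuLoc condition in S}, since $\recGlob^\lDeg\testGlobTrace|_S\in\IP^\lDeg(S)$. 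By \eqref{eq: FMuLoc condition on the boundary of S}, the boundary term becomes $\frac12\sumS(\E A_h^\lDeg\trialGlobTrace,\recGlob^\lDeg\testGlobTrace)_{\partial S}$. The extension $\E$ vanishes on interior subfaces, and each face of $\faces$ is covered by subfaces from the two adjacent elements (one for boundary faces, where $\testGlobTrace=0$ anyway). Hence this sum equals $\sumT\frac12(A_h^\lDeg\trialGlobTrace,\recLoc^\lDeg\testLocTrace)_{\partial T}$. Similarly, $(A_h^\lDeg\trialGlobTrace,\testGlobTrace)_\facesUnion=\sumT\frac12(A_h^\lDeg\trialGlobTrace,\testLocTrace)_{\partial T}$ for interior faces. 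Therefore
\[
a_h^\lDeg(\eta_h,\testGlobTrace)=\tfrac12\sumT(A_h^\lDeg\trialGlobTrace,\testLocTrace-\recLoc^\lDeg\testLocTrace)_{\partial T}.
\]
Applying Cauchy--Schwarz together with \eqref{eq: stabilization: trace - cell dof bound}, which gives $\|\testLocTrace-\recLoc^\lDeg\testLocTrace\|_{\partial T}\lesssim (h_T/\lDeg)^{1/2}\|(\recLoc^\lDeg\testLocTrace,\testLocTrace)\|_{\underline a_T^\lDeg}$, and then quasi-uniformity, we obtain
\[
a_h^\lDeg(\eta_h,\testGlobTrace)\lesssim (h/\lDeg)^{1/2}\|A_h^\lDeg\trialGlobTrace\|_\facesUnion\,\aNormGlobNoDeg{\lDeg}{\testGlobTrace}.
\]
Choosing $\testGlobTrace=\eta_h$ concludes.

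The main obstacle is the bookkeeping of the factor $\frac12$ and the face multiplicity: the skeletal inner product counts each interior face once, while the sum over $\partial T$ counts it twice, so the $\frac12$ in the lifting must compensate exactly. A second concern is confirming that the RT index yields $\nabla\cdot\FMuLoc{\lDeg+1}\in\IP^\lDeg(S)$ and $\FMuLoc{\lDeg+1}\cdot\normalS\in\IP^\lDeg$ on faces. If instead the normal trace has degree $\lDeg+1$, the boundary identity still holds because $\recLoc^\lDeg\testLocTrace|_{\partial S}\in \IP^\lDeg(\mathcal G_S)$, and only the divergence bound would need the moment argument applied with the full RT degree.
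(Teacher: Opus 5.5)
Your proposal is correct and follows the paper's proof essentially step by step. For \eqref{eq: energy error aux rhs} you integrate by parts on each simplex and use \eqref{eq: FMuLoc} together with the definition of $\E$ to reduce $a_h^l(\mu_h-\widetilde\mu_h,\nu_h)$ to $\frac12\sum_{T\in\mesh}(A_h^l\mu_h,\nu_T-U_T^l\nu_T)_{\partial T}$, then conclude with \eqref{eq: stabilization: trace - cell dof bound}; for \eqref{eq: L2 bound aux rhs}, your test with $q\in\IP^l(S)$ is the paper's choice $q=\nabla\cdot\boldsymbol F_S^{l+1}(\mu_h)$ (admissible because the Raviart--Thomas field has divergence in $\IP^l(S)$, so your hedge about degree $l+1$ is unnecessary), followed by the discrete trace inequality on the submesh, where passing from $h_S^{-1}$ to $h^{-1}$ needs the submesh regularity $h_{T_S}\lesssim h_S$ in addition to quasi-uniformity.
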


\begin{proof}
  For any $S\in\submesh$, let $T_S\in\mesh$ denote the unique element such that $S\subseteq T_S$.
  \medskip\\
  \emph{Proof of \eqref{eq: energy error aux rhs}.} We write, for any $\testGlobTrace \in \hhoSpaceTrace_h^\lDeg$,
  \[
  \begin{aligned}
    a_h^\lDeg(\trialGlobTrace - \tildeTrialGlobTrace, \testGlobTrace)
    \overset{\eqref{eq: operator corresponding to general bilinear form level j}, \eqref{eq: aux variable tilde trial glob trace definition}}
    &= (A_h^\lDeg\trialGlobTrace, \testGlobTrace)_\facesUnion - (\nabla_h\cdot\FMuGlob{\lDeg+1}, \recGlob^\lDeg\testGlobTrace)_\Omega\\
    &= (A_h^\lDeg\trialGlobTrace, \testGlobTrace)_\facesUnion - \sumS(\nabla\cdot\FMuLoc{\lDeg+1}, \recLocS^\lDeg\testLocTraceS)_S\\
    &= (A_h^\lDeg\trialGlobTrace, \testGlobTrace)_\facesUnion - \sumS\left[
      -(\FMuLoc{\lDeg+1}, \nabla\recLocS^\lDeg\testLocTraceS)_S + (\FMuLoc{\lDeg+1}\cdot\normalS, \recLocS^\lDeg\testLocTraceS)_{\partial S}
      \right]
    \\
    \overset{\eqref{eq: FMuLoc}}
    &= (A_h^\lDeg\trialGlobTrace, \testGlobTrace)_\facesUnion - \frac{1}{2}\sumS (\E A_h^\lDeg\trialGlobTrace, \recLocS^\lDeg\testLocTraceS)_{\partial S}
    \\
    \overset{\eqref{eq: extension of traces}}
    &= \frac{1}{2} \sumT \left[
      (A_h^\lDeg\trialGlobTrace, \testLocTrace)_{\partial T} - (A_h^\lDeg\trialGlobTrace, \recLoc^\lDeg\testLocTrace)_{\partial T}
      \right]
    \\
    \overset{\eqref{eq: stabilization: trace - cell dof bound}}
    &\lesssim \left(
    \sumT\|A_h^\lDeg\trialGlobTrace\|_{\partial T}^2
    \right)^{\frac{1}{2}} \left(
    \sumT\frac{h_T}{\lDeg}\aNormNonCondLocNoDeg{\lDeg}{(\recLoc^\lDeg\testLocTrace, \testLocTrace)}^2
    \right)^{\frac{1}{2}}
    \\
    &\lesssim \errFacL^{\frac{1}{2}}\|A_h^\lDeg\trialGlobTrace\|_{\facesUnion}\aNormGlobNoDeg{\lDeg}{\testGlobTrace},
  \end{aligned}
  \]
  where in the third step we integrated by parts on each $S\in\submesh$, in the fifth step we have additionally used the fact that $\testGlobTrace$ vanishes on boundary faces, and
  in the sixth step we used a Cauchy--Schwarz inequality on the sum. Choosing $\testGlobTrace = \trialGlobTrace - \tildeTrialGlobTrace$ and simplifying completes the proof.
  \medskip\\
  \emph{Proof of \eqref{eq: L2 bound aux rhs}.} We
  proceed by a similar local expansion and integration by parts:
  \begin{align*}
    \|\nabla_h\cdot\FMuGlob{\lDeg+1}\|_\Omega^2
    &= \sumS(\nabla\cdot\FMuLoc{\lDeg+1}, \nabla\cdot\FMuLoc{\lDeg+1})_S
    \\
    &= \sumS\left[
      -\big(\FMuLoc{\lDeg+1}, \nabla(\nabla\cdot\FMuLoc{\lDeg+1})\big)_S + (\FMuLoc{\lDeg+1}\cdot\normalS, \nabla\cdot\FMuLoc{\lDeg+1})_{\partial S}
      \right]
    \\
    \overset{\eqref{eq: FMuLoc}}
    &= \frac{1}{2}\sumS(\E A_h^\lDeg\trialGlobTrace, \nabla\cdot\FMuLoc{\lDeg+1})_{\partial S}
    \\
    &\le \left(
    \sumS \|\E A_h^\lDeg\trialGlobTrace\|_{\partial S}^2
    \right)^{\frac{1}{2}}\left(
    \sumS \|\nabla\cdot\FMuLoc{\lDeg+1}\|_{\partial S}^2
    \right)^{\frac{1}{2}}
    \\
    \overset{\eqref{eq: extension of traces}}
    &\lesssim \left(
    \sumT \|A_h^\lDeg\trialGlobTrace\|_{\partial T}^2
    \right)^{\frac{1}{2}} \left(
    \sumS \frac{\lDeg^2}{h_S}\|\nabla\cdot\FMuLoc{\lDeg+1}\|_{S}^2
    \right)^{\frac{1}{2}}
    \\
    &\lesssim \frac{\lDeg}{h^{\frac{1}{2}}}\|A_h^\lDeg\trialGlobTrace\|_{\facesUnion}\|\nabla_h\cdot\FMuGlob{\lDeg+1}\|_\Omega,
  \end{align*}
  where in the fifth step we additionally used a trace inequality analogous to \eqref{eq: discrete inverse trace inequality} but on the submesh,
  in the penultimate step we used a Cauchy--Schwarz inequality on the sum,
  and in the last step $h_T\lesssim h_S$ (consequence of mesh regularity) together with the mesh quasi-uniformity assumption. We conclude after simplification.
\end{proof}

\begin{prop}[Basic regularity and approximation estimate]
  The estimate \eqref{eq: desired bound for algorithm 1} holds.
\end{prop}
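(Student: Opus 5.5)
The plan is to turn the left-hand side of \eqref{eq: desired bound for algorithm 1} into a skeletal $L^2$-error and then reuse the condensed error analysis of Section~\ref{section: condensed error analysis}.

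\emph{Reduction to a trace error.} Since $\IP^{k_{j-1}}(\faces)\subset\IP^{k_j}(\faces)$, we have $\projMGJMinus\trialGlobTrace\in\hhoSpaceTrace_h^{k_j}$. Then \eqref{eq: operator corresponding to general bilinear form level j} gives
\[
a_h^{k_j}\big(\trialGlobTrace-\projMGJMinus\trialGlobTrace,\trialGlobTrace\big)
=(A_h^{k_j}\trialGlobTrace,\trialGlobTrace-\projMGJMinus\trialGlobTrace)_\facesUnion
\le\|A_h^{k_j}\trialGlobTrace\|_\facesUnion\,\|\trialGlobTrace-\projMGJMinus\trialGlobTrace\|_\facesUnion .
\]
It therefore suffices to prove
\[
\|\trialGlobTrace-\projMGJMinus\trialGlobTrace\|_\facesUnion\lesssim\sigma_j^{\frac54}h\,k_j^{-\frac14}\|A_h^{k_j}\trialGlobTrace\|_\facesUnion .
\]

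\emph{Auxiliary continuous problem.} Set $f\coloneqq\nabla_h\cdot\FMuGlob{k_j+1}$, so that \eqref{eq: L2 bound aux rhs} gives $\|f\|_\Omega\lesssim k_j h^{-\frac12}\|A_h^{k_j}\trialGlobTrace\|_\facesUnion$. Let $u\in H^1_0(\Omega)$ solve \eqref{eq: continuous Poisson problem} with this $f$. By construction, $\tildeTrialGlobTrace$ from \eqref{eq: aux variable tilde trial glob trace definition} with $\lDeg=k_j$ is exactly the condensed HHO solution of degree $k_j$ for $f$. Let $\widehat\mu_h\in\hhoSpaceTrace_h^{k_{j-1}}$ be the condensed solution of degree $k_{j-1}$ for the same $f$. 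I then split
\[
\trialGlobTrace-\projMGJMinus\trialGlobTrace=(\trialGlobTrace-\tildeTrialGlobTrace)+(\tildeTrialGlobTrace-\projLTraceGlob u)+(\projLTraceGlob u-\projLTraceGlobN u)+(\projLTraceGlobN u-\widehat\mu_h)+(\widehat\mu_h-\projMGJMinus\trialGlobTrace).
\]
The second and fourth terms are handled by Theorem~\ref{theorem: condensed L2-error estimate}, with degrees $k_j$ and $k_{j-1}$ respectively. Combined with the bound on $\|f\|_\Omega$, this yields
\[
k_{j-1}^{\frac14}\Big(\frac{h}{k_{j-1}}\Big)^{\frac32}\frac{k_j}{h^{\frac12}}=\sigma_j^{\frac54}h\,k_j^{-\frac14}.
\]
This computation is exactly where the exponent $\sigma_j^{5/4}$ comes from. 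The third term is a pure trace projection error. Using \eqref{eq: approximation L2-projection on trace} with $m=2$ and elliptic regularity, it is bounded by $(h/k_{j-1})^{3/2}\|f\|_\Omega$, which is of the same order or better.

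\emph{First term.} For $\trialGlobTrace-\tildeTrialGlobTrace$, I would reuse the chain \eqref{eq: condensed L2-estimate approach}. This bounds the skeletal norm by $\frac hk\|\cdot\|_{a}^2$ plus terms in $\|\recGlob^{k_j}(\trialGlobTrace-\tildeTrialGlobTrace)\|_\Omega$. The energy part is controlled by \eqref{eq: energy error aux rhs}, giving $(h/k_j)\|A_h^{k_j}\trialGlobTrace\|_\facesUnion$, which is harmless. For the element $L^2$-norm of $\recGlob^{k_j}(\trialGlobTrace-\tildeTrialGlobTrace)$, I would run a duality argument in the spirit of the proof of Theorem~\ref{theorem: non-condensed L2-error estimate}, using the representation of $a_h^{k_j}(\trialGlobTrace-\tildeTrialGlobTrace,\cdot)$ as $\frac12\sum_T(A_h^{k_j}\trialGlobTrace,\testLocTrace-\recLoc^{k_j}\testLocTrace)_{\partial T}$ obtained in the proof of \eqref{eq: energy error aux rhs}. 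Testing with the interpolant of the dual solution and using \eqref{eq: stabilization: trace - cell dof bound} should gain an extra factor $(h/k_j)^{1/2}$.

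\emph{Last term (main obstacle).} The hardest step is $\widehat\mu_h-\projMGJMinus\trialGlobTrace$. For $\testGlobTrace\in\hhoSpaceTrace_h^{k_{j-1}}$ I have
\[
a_h^{k_{j-1}}(\projMGJMinus\trialGlobTrace-\widehat\mu_h,\testGlobTrace)=a_h^{k_j}(\trialGlobTrace-\tildeTrialGlobTrace,\testGlobTrace)+(f,\recGlob^{k_j}\testGlobTrace-\recGlob^{k_{j-1}}\testGlobTrace)_\Omega .
\]
The first piece is bounded via \eqref{eq: energy error aux rhs} together with a comparison of the energy norms of degrees $k_j$ and $k_{j-1}$ on coarse traces. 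For the second piece, I would insert $\pm\projLLoc$ of the harmonic extension of $\testGlobTrace$ on each $T$ and apply the estimate \eqref{eq: U_T^k L2 error estimate} at both degrees. This should give an $(h/k_{j-1})^2$ factor in $L^2$, which must then be converted to a trace bound by the same chain \eqref{eq: condensed L2-estimate approach}. The delicate point is that \eqref{eq: U_T^k L2 error estimate} requires $H^2$ harmonic extensions, whereas $\testGlobTrace$ is only a discrete trace. I expect to need a duality or inverse argument here, and I would accept losing powers of $\sigma_j$ as long as they do not exceed $\sigma_j^{5/4}$.
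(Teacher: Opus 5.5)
\textbf{The fifth term is a genuine gap.} Your reduction to $\|\trialGlobTrace-\projMGJMinus\trialGlobTrace\|_\facesUnion$ and your treatment of the second, third and fourth terms match the paper, including the computation that produces $\sigma_j^{5/4}$. The fifth term, which you flag as the main obstacle, is where the proof breaks down, and the route you sketch for it does not close. A discrete trace $\nu_T\in\IP^{\nDeg}(\facesLoc)$ is in general discontinuous across the faces of $\partial T$. Its harmonic extension is therefore not in $H^2(T)$, not even in $H^1(T)$, so \eqref{eq: U_T^k L2 error estimate} cannot be applied to $(f,\recGlob^{\mDeg}\testGlobTrace-\recGlob^{\nDeg}\testGlobTrace)_\Omega$. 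The paper also provides no comparison of $\|\cdot\|_{a_h^{\mDeg}}$ and $\|\cdot\|_{a_h^{\nDeg}}$ on $\hhoSpaceTrace_h^{\nDeg}$. The natural argument for one costs a factor of order $\mDeg\,\nDeg^{-1/2}$: it bounds the $(I-\boldsymbol\Pi_T^{\nDeg})$-component of $\gradHorNoDeg^{\mDeg}$ via \eqref{eq: discrete inverse trace inequality} and \eqref{eq: stabilization: trace - cell dof bound}.

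\textbf{The missing idea.} None of this is needed, because $\widehat\mu_h$ is exactly the auxiliary field $\tildePNTrialGlobTrace$ attached to $\projMGN\trialGlobTrace$ at degree $\nDeg$. To see this, first test \eqref{eq: P_kj for multigrid definition} with $\testGlobTrace\in\hhoSpaceTrace_h^{\nDeg}\subset\hhoSpaceTrace_h^{\mDeg}$; this gives $\projLTraceGlobN A_h^{\mDeg}\trialGlobTrace=A_h^{\nDeg}\projMGN\trialGlobTrace$. Next, integrating by parts on the submesh with \eqref{eq: FMuLoc} gives $(\nabla_h\cdot\FMuGlob{\mDeg+1},v_T)_T=\frac12(A_h^{\mDeg}\trialGlobTrace,v_T)_{\partial T}$ for all $v_T\in\IP^{\nDeg}(T)$. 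Together these yield $\projLLocN(\nabla_h\cdot\FMuGlob{\mDeg+1})=\nabla_h\cdot\FPNMuGlob{\nDeg+1}|_T$. Since $\recGlob^{\nDeg}\testGlobTrace$ is piecewise in $\IP^{\nDeg}$, the right-hand sides defining $\widehat\mu_h$ and $\tildePNTrialGlobTrace$ coincide. Your five-term splitting is then exactly the paper's. The fifth term is just your first term at level $\nDeg$ applied to $\projMGN\trialGlobTrace$. Combined with $\|A_h^{\nDeg}\projMGN\trialGlobTrace\|_\facesUnion=\|\projLTraceGlobN A_h^{\mDeg}\trialGlobTrace\|_\facesUnion\le\|A_h^{\mDeg}\trialGlobTrace\|_\facesUnion$, this gives $\sigma_j^{5/8}h\,\mDeg^{-5/8}\|A_h^{\mDeg}\trialGlobTrace\|_\facesUnion$ with no cross-degree comparison.

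\textbf{The first term needs more than \eqref{eq: stabilization: trace - cell dof bound}.} In the duality argument for $\|\recGlob^{\mDeg}(\trialGlobTrace-\tildeTrialGlobTrace)\|_\Omega$ with dual solution $z$, the plus expression reduces to $\frac12\sum_T(A_h^{\mDeg}\trialGlobTrace,\projLTraceLocArbitraryDeg^{\mDeg}z-\recLoc^{\mDeg}\projLTraceLocArbitraryDeg^{\mDeg}z)_{\partial T}$. Bounding the second factor by \eqref{eq: stabilization: trace - cell dof bound} together with the local minimization property of $\recLoc^{\mDeg}$ only yields $(h/\mDeg)^{1/2}\|\nabla z\|_\Omega$. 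That gives $\|\recGlob^{\mDeg}(\trialGlobTrace-\tildeTrialGlobTrace)\|_\Omega\lesssim(h/\mDeg)^{1/2}\|A_h^{\mDeg}\trialGlobTrace\|_\facesUnion$. The term $h^{-1}\|\recGlob^{\mDeg}(\cdot)\|_\Omega^2$ in \eqref{eq: condensed L2-estimate approach} then leaves no positive power of $h$, so the criterion constant would not be uniform in $h$. The paper proceeds differently:
\begin{itemize}
\item it splits $z|_T=z_T^H+z_T^P$ and uses $\projLTraceLocArbitraryDeg^{\mDeg}z=\projLTraceLocArbitraryDeg^{\mDeg}z_T^H$;
\item it inserts $\pm\projLLocM z_T^H$;
\item it applies \eqref{eq: approximation L2-projection on trace}, \eqref{eq: U_T^k energy error estimate}, \eqref{eq: U_T^k L2 error estimate} and \eqref{eq: bound nabla cell dof in terms of energy norm}.
\end{itemize}
This produces the factor $\mDeg^{1/4}(h/\mDeg)^{3/2}$, which is what yields $\|\trialGlobTrace-\tildeTrialGlobTrace\|_\facesUnion\lesssim h\,\mDeg^{-5/8}\|A_h^{\mDeg}\trialGlobTrace\|_\facesUnion$.
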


\begin{proof}
  For any $\trialGlobTrace\in\hhoSpaceTrace_h^{\mDeg}$, we have
  \begin{equation}
    \label{eq: first step}
    a_h^{\mDeg}\big(\trialGlobTrace-\projMGN\trialGlobTrace,\trialGlobTrace)\overset{\eqref{eq: operator corresponding to general bilinear form level j}}=
    (\trialGlobTrace-\projMGN\trialGlobTrace, A_h^{\mDeg} \trialGlobTrace)_\facesUnion
    \le \|\trialGlobTrace-\projMGN\trialGlobTrace\|_\facesUnion\|A_h^{\mDeg} \trialGlobTrace\|_\facesUnion,
  \end{equation}
  where we additionally used the symmetry of $a_h^{\mDeg}$ in the first step.
  Let now $\FMuGlob{\mDeg+1}\in\IP^{\mDeg+1}(\submesh)^d$ and $\FPNMuGlob{\nDeg+1}\in\IP^{\nDeg+1}(\submesh)^d$
  be defined as in Lemma \ref{lemma: Raviart--Thomas element FMuLoc} for $\trialGlobTrace\in\hhoSpaceTrace_h^{\mDeg}$ and $\projMGN\trialGlobTrace\in\hhoSpaceTrace_h^{\nDeg}$, respectively.
  Let also $\tildeTrialGlobTrace\in\hhoSpaceTrace_h^{\mDeg}$ and $\tildePNTrialGlobTrace\in\hhoSpaceTrace_h^{\nDeg}$ be the corresponding auxiliary fields defined as in \eqref{eq: aux variable tilde trial glob trace definition}.
  Applying a triangle inequality to the first factor in the right hand-side of \eqref{eq: first step} after inserting $\pm (\tildeTrialGlobTrace - \tildePNTrialGlobTrace)$, we get
  \[
  \|\trialGlobTrace-\projMGN\trialGlobTrace\|_\facesUnion
  \le
  \|\tildeTrialGlobTrace - \tildePNTrialGlobTrace\|_\facesUnion
  + \|\trialGlobTrace - \tildeTrialGlobTrace\|_\facesUnion
  + \|\projMGN\trialGlobTrace - \tildePNTrialGlobTrace\|_\facesUnion.
  \]
  We will prove that
  \begin{align}
    \label{eq: claimed bound 3}
    \|\tildeTrialGlobTrace - \tildePNTrialGlobTrace\|_\facesUnion & \lesssim \sigma_j^{\frac{5}{4}}\frac{h}{\mDeg^{\frac{1}{4}}}\|A_h^{\mDeg}\mu\|_{\facesUnion},
    \\
    \label{eq: claimed bound 1}
    \|\trialGlobTrace - \tildeTrialGlobTrace\|_\facesUnion &\lesssim \frac{h}{\mDeg^{\frac{5}{8}}}\|A_h^{\mDeg}\mu\|_{\facesUnion},
    \\
    \label{eq: claimed bound 2}
    \|\projMGN\trialGlobTrace - \tildePNTrialGlobTrace\|_\facesUnion &\lesssim \sigma_j^{\frac{5}{8}}\frac{h}{\mDeg^{\frac{5}{8}}}\|A_h^{\mDeg}\mu\|_{\facesUnion}.
  \end{align}
  The estimate \eqref{eq: desired bound for algorithm 1} is obtained using the above bounds in \eqref{eq: first step} and noticing that the first term has the worse estimate and hence dominates.
  \medskip\\
  \emph{Proof of \eqref{eq: claimed bound 3}.}
  We first notice that
  \begin{equation}
    \label{eq: A^k P^k is proj of A^n}
    \projLTraceGlobN A_h^{\mDeg}\trialGlobTrace = A_h^{\nDeg}\projMGN\trialGlobTrace,
  \end{equation}
  as, for any $\testGlobTrace\in\hhoSpaceTrace_h^{\nDeg} \subset \hhoSpaceTrace_h^{\mDeg}$,
  \[
  (A_h^{\mDeg}\trialGlobTrace, \testGlobTrace)_{\facesUnion}
  \overset{\eqref{eq: operator corresponding to general bilinear form level j}}
  = a_h^{\mDeg}(\trialGlobTrace, \testGlobTrace)
  \overset{\eqref{eq: P_kj for multigrid definition}}
  = a_h^{\nDeg}(\projMGN \trialGlobTrace, \testGlobTrace)
  \overset{\eqref{eq: operator corresponding to general bilinear form level j}}
  = (A_h^{\nDeg}\projMGN\trialGlobTrace, \testGlobTrace)_{\facesUnion}.
  \]
  As a consequence there holds, for any $T\in\mesh$, that
  \begin{equation}
    \label{eq: projection property of RT rhs}
    \projLLocN \big(\nabla_h\cdot \FMuGlob{\mDeg+1}\big) = \nabla_h\cdot\FPNMuGlob{\nDeg+1}|_T.
  \end{equation}
  This follows observing that, for any $v_T\in\IP^{\nDeg}(T)$, we have
  \begin{equation}
    \label{eq: proving projection property of RT rhs}
    (\nabla_h\cdot\FMuGlob{\mDeg+1}, v_T)_T
    = \sum_{S \in \submeshT} (\nabla\cdot\FMuGlob{\mDeg+1}, v_T)_S
    \overset{\eqref{eq: FMuLoc}}
    = \sum_{S \in \submeshT} \frac{1}{2} (\E A_h^{\mDeg}\trialGlobTrace, v_T)_{\partial S}
    \overset{\eqref{eq: extension of traces}}
    = \frac{1}{2} (A_h^{\mDeg}\trialGlobTrace, v_T)_{\partial T},
  \end{equation}
  where $\submeshT \coloneqq \left\{ S \in \submesh : S \subseteq T\right\}$ and in the second step we used integration by parts.
    Hence, for any $T \in \mesh$ and any $v_T\in\IP^{\nDeg}(T)$,
    \[
    (\nabla_h\cdot\FMuGlob{\mDeg+1}, v_T)_T
    \overset{\eqref{eq: proving projection property of RT rhs}}=\frac{1}{2} (A_h^{\mDeg}\trialGlobTrace, v_T)_{\partial T}
    \overset{\eqref{eq: A^k P^k is proj of A^n}}= \frac{1}{2} (A_h^{\nDeg}\projMGN\trialGlobTrace, v_T)_{\partial T}
    = (\nabla_h\cdot\FPNMuGlob{\nDeg+1}, v_T)_T,
    \]
    where the last step is a consequence of \eqref{eq: proving projection property of RT rhs} with $\mu_h$ replaced by $\projMGN\mu_h$.
  This finishes the proof of relation \eqref{eq: projection property of RT rhs}.
  But then the auxiliary fields $\tildeTrialGlobTrace$ and $\tildePNTrialGlobTrace$ are, respectively, solutions of:
  \begin{alignat}{2} \label{eq: tildePNTrialGlobTrace}
    a_h^{\nDeg}(\tildePNTrialGlobTrace, \testGlobTrace)
    &= (\nabla_h\cdot\FPNMuGlob{\nDeg+1}, \recGlob^{\nDeg}\testGlobTrace)_\Omega
    \overset{\eqref{eq: projection property of RT rhs}}
    = (\nabla_h\cdot\FMuGlob{\mDeg+1}, \recGlob^{\nDeg}\testGlobTrace)_\Omega
    &\qquad& \forall\testGlobTrace\in\hhoSpaceTrace_h^{\nDeg},
    \\ \label{eq: tildeTrialGlobTrace}
    a_h^{\mDeg}(\tildeTrialGlobTrace, \testGlobTrace) &= (\nabla_h\cdot\FMuGlob{\mDeg+1}, \recGlob^{\mDeg}\testGlobTrace)_\Omega
    &\qquad& \forall\testGlobTrace\in\hhoSpaceTrace_h^{\mDeg},
  \end{alignat}
  that is, they are discrete solutions of different degrees to the same right hand-side $\nabla_h\cdot\FMuGlob{\mDeg+1}$.
  Let now $\widetilde{u}\in H_0^1(\Omega)$ solve
  \[
  (\nabla\widetilde{u}, \nabla v)_\Omega = (\nabla_h\cdot\FMuGlob{\mDeg+1}, v)_\Omega \qquad \forall v\in H_0^1(\Omega).
  \]
  Applying a triangle inequality after inserting $\pm (\projLTraceGlobArbitraryDeg^{\mDeg} \widetilde{u} - \projLTraceGlobArbitraryDeg^{\nDeg} \widetilde{u})$ into the norm yields
  \begin{align*}
    \|\tildeTrialGlobTrace - \tildePNTrialGlobTrace\|_\facesUnion
    &\le \|\tildeTrialGlobTrace - \projLTraceGlobArbitraryDeg^{\mDeg} \widetilde{u}\|_\facesUnion + \|\tildePNTrialGlobTrace - \projLTraceGlobArbitraryDeg^{\nDeg} \widetilde{u}\|_\facesUnion
    + \|\projLTraceGlobArbitraryDeg^{\mDeg} \widetilde{u} - \projLTraceGlobArbitraryDeg^{\nDeg} \widetilde{u}\|_\facesUnion\\
    &\le \|\tildeTrialGlobTrace - \projLTraceGlobArbitraryDeg^{\mDeg} \widetilde{u}\|_\facesUnion + \|\tildePNTrialGlobTrace - \projLTraceGlobArbitraryDeg^{\nDeg} \widetilde{u}\|_\facesUnion
    + \|\widetilde{u} - \projLTraceGlobArbitraryDeg^{\nDeg} \widetilde{u}\|_\facesUnion\\
    \overset{\eqref{eq: theorem condensed L2 error estimate}, \eqref{eq: approximation L2-projection on trace}}
    &\lesssim \left[
      \mDeg^{\frac14}\errFacM^{\frac{3}{2}} + \nDeg^{\frac14}\errFacN^{\frac{3}{2}} + \errFacN^{\frac{3}{2}}
      \right]  \left(
    \|\widetilde{u}\|_{2, \Omega} + \|\nabla_h\cdot\FMuGlob{\mDeg+1}\|_\Omega
    \right)
    \\
    \overset{\eqref{eq: elliptic regularity}}
    &\lesssim \Big(\frac{\sigma_j}{\mDeg}\Big)^{\frac{5}{4}}h^{\frac{3}{2}} \|\nabla_h\cdot\FMuGlob{\mDeg+1}\|_\Omega\\
    \overset{\eqref{eq: L2 bound aux rhs}}
    &\lesssim \Big(\frac{\sigma_j}{\mDeg}\Big)^{\frac{5}{4}}h^{\frac{3}{2}}\frac{\mDeg}{h^{\frac{1}{2}}} \|A_h^{\mDeg}\trialGlobTrace\|_\facesUnion,
  \end{align*}
  where in the penultimate step we used $\mDeg \overset{\eqref{eq: jump degrees}}= \sigma_j \nDeg$ together with $1 \le \nDeg$ for the last term in square brackets.
  \medskip\\
  \emph{Proof of \eqref{eq: claimed bound 1}.}
  By \eqref{eq: energy error aux rhs}, we can bound $\aNormGlobM{\trialGlobTrace - \tildeTrialGlobTrace}$.
  The $L^2(\facesUnion)$-bound for this quantity follows from a mix of arguments already used in the proofs of Theorems \ref{theorem: condensed energy error estimate} and \ref{theorem: condensed L2-error estimate}.
  Let $\errGlobTrace \coloneqq \trialGlobTrace - \tildeTrialGlobTrace \in \hhoSpaceTrace_h^{\mDeg}$.
  Copying the steps in \eqref{eq: condensed L2-estimate approach}, we obtain
  \begin{align}
    \label{eq: multigrid l2 error trace expression}
    \|\errGlobTrace\|^2_{\facesUnion}
    \lesssim \frac{h}{\mDeg}\aNormGlobM{\errGlobTrace}^2 +  \mDeg^{\frac{1}{2}}\|\recGlob^{\mDeg}\errGlobTrace\|_\Omega \aNormGlobM{\errGlobTrace} + h^{-1}\|\recGlob^{\mDeg}\errGlobTrace\|_\Omega^2.
  \end{align}
  Thus, it only remains to bound the factor $\|\recGlob^{\mDeg}\errGlobTrace\|_\Omega$. Let $z\in H_0^1(\Omega)$ be the elliptic lifting of $\recGlob^{\mDeg}\errGlobTrace$ obtained solving the following equation:
  \[
  (\nabla z, \nabla v)_\Omega = (\recGlob^{\mDeg}\errGlobTrace, v)_\Omega \qquad \forall v\in H^{1}_0(\Omega).
  \]
  Since $\recGlob^{\mDeg}\errGlobTrace\in L^2(\Omega)$, elliptic regularity yields $z\in H^2(\Omega)$ and $-\Delta z  = \recGlob^{\mDeg}\errGlobTrace$ almost everywhere on $\Omega$.
  Proceeding as in \eqref{eq: non-condensed L2-error error expression}, we obtain
  \begin{equation}
    \label{eq: multigrid error expression}
    \|\recGlob^{\mDeg}\errGlobTrace\|_\Omega^2
    \overset{\eqref{eq: cancellation via normal trace component}}
    = \sumT \Big[(\nabla \recLoc^{\mDeg}\errLocTrace, \projLLocVecM\nabla z)_T + (\errLocTrace-\recLoc^{\mDeg}\errLocTrace, \nabla z \cdot \normalT)_{\partial T}\Big].
  \end{equation}
  We will add and subtract two different expressions for $\aGlobArgM{\errGlobTrace}{\projLTraceGlobArbitraryDeg^{\mDeg} z}$ to the above error. The minus expression is, proceeding as in  \eqref{eq: standard derivation p.11},
  \begin{equation}
    \label{eq: multigrid minus expression}
    \begin{aligned}
      &\aGlobArgM{\errGlobTrace}{\projLTraceGlobArbitraryDeg^{\mDeg} z}
      \\
      &\begin{aligned}[t]
      	\overset{\eqref{eq:aNonCondGlob}}
      	&= \aNonCondGlobArgM{(\recGlob^{\mDeg} \errGlobTrace, \errGlobTrace)}{(\recGlob^{\mDeg}\projLTraceGlobArbitraryDeg^{\mDeg} z, \projLTraceGlobArbitraryDeg^{\mDeg} z)}\\
         \overset{\eqref{eq: U_T^k definition}}
         &= \aNonCondGlobArgM{(\recGlob^{\mDeg} \errGlobTrace, \errGlobTrace)}{\interpolGlobM{z}}\\
         \overset{\eqref{eq: gradHor commutation with interpolation}, \eqref{eq: gradHor definition}}
         &= \sumT\left[
           \big(\nabla\recLoc^{\mDeg} \errLocTrace, \projLLocVecM\nabla z\big)_T + (\errLocTrace-\recLoc^{\mDeg} \errLocTrace, \projLLocVecM\nabla z \cdot \normalT)_{\partial T} + \sNonCondLocArgM{(\recLoc^{\mDeg} \errLocTrace, \errLocTrace)}{\interpolLocM{z}}
           \right].
       \end{aligned}
    \end{aligned}
  \end{equation}
  To obtain the plus expression, for any $S\in\submesh$, let $T_S\in\mesh$ denote the unique element such that $S\subseteq T_S$.
  Expanding the error $\errGlobTrace = \trialGlobTrace - \tildeTrialGlobTrace$, using the definition \eqref{eq: tildeTrialGlobTrace} of $\tildeTrialGlobTrace$ with $\testGlobTrace = \projLTraceGlobArbitraryDeg^{\mDeg} z$, and integrating by parts on $S\in\submesh$, we get
  \begin{equation}
    \label{eq: multigrid plus expression 1}
    \begin{aligned}
      \aGlobArgM{\errGlobTrace}{\projLTraceGlobArbitraryDeg^{\mDeg} z}
      \overset{\eqref{eq: operator corresponding to general bilinear form level j}, \eqref{eq: aux variable tilde trial glob trace definition}}
      &= (A_h^{\mDeg}\trialGlobTrace, \projLTraceGlobArbitraryDeg^{\mDeg} z)_\facesUnion - (\nabla_h\cdot\FMuGlob{\mDeg+1}, \recGlob^{\mDeg}\projLTraceGlobArbitraryDeg^{\mDeg} z)_\Omega\\
      &= (A_h^{\mDeg}\trialGlobTrace, \projLTraceGlobArbitraryDeg^{\mDeg} z)_\facesUnion - \sumS\left[
        -(\FMuLoc{\mDeg+1}, \nabla\recLocS^{\mDeg} \projLTraceLocSArbitraryDeg^{\mDeg} z)_S
        + (\FMuLoc{\mDeg+1}\cdot\normalS, \recLocS^{\mDeg} \projLTraceLocSArbitraryDeg^{\mDeg} z)_{\partial S}
        \right]
      \\
      \overset{\eqref{eq: FMuLoc}}
      &= (A_h^{\mDeg}\trialGlobTrace, \projLTraceGlobArbitraryDeg^{\mDeg} z)_\facesUnion - \frac{1}{2}\sumS
        (\E A_h^{\mDeg}\trialGlobTrace, \recLocS^{\mDeg} \projLTraceLocSArbitraryDeg^{\mDeg} z)_{\partial S}
      \\
      \overset{\eqref{eq: extension of traces}}
      &= \frac{1}{2}\sumT (A_h^{\mDeg}\trialGlobTrace, \projLTraceLocArbitraryDeg^{\mDeg} z - \recLoc^{\mDeg} \projLTraceLocArbitraryDeg^{\mDeg} z)_{\partial T}\\
      &\lesssim \|A_h^{\mDeg}\trialGlobTrace\|_\facesUnion \left(
      \sumT \|\projLTraceLocArbitraryDeg^{\mDeg} z - \recLoc^{\mDeg} \projLTraceLocArbitraryDeg^{\mDeg} z\|_{\partial T}^2
      \right)^{\frac{1}{2}},
    \end{aligned}
  \end{equation}
  where in the last step we used a Cauchy--Schwarz inequality on the sum. For all $T\in\mesh$, let $z_T^H$ be defined by \eqref{eq: u_T^H} with $z$ replacing $w$ and let $z_T^P$ be defined by \eqref{eq: u_T^P definition} with $\recLoc^{\mDeg}\errLocTrace$ replacing $g$.
  Decomposing $z|_T$ as $z|_T = z_T^H + z_T^P$ according to \eqref{eq: u|_T = u_T^P + u_T^H}, we can write
  \begin{equation}
    \label{eq: z_T^H bound}
    \|z_T^H\|_{2, T} \le \|z_T^H + z_T^P\|_{2, T} + \|z_T^P\|_{2,T}
      \overset{\eqref{eq: elliptic regularity}}
      \lesssim \|z\|_{2,T} + \|\recLoc^{\mDeg}\errLocTrace\|_T.
  \end{equation}
  Substituting $\projLTraceLocArbitraryDeg^{\mDeg} z = \projLTraceLocArbitraryDeg^{\mDeg} z_T^H$ (since $z_T^P|_{\partial T} \equiv 0$) in the second factor of \eqref{eq: multigrid plus expression 1} yields:
  \[
  \begin{aligned}
    &\|\projLTraceLocArbitraryDeg^{\mDeg} z - \recLoc^{\mDeg} \projLTraceLocArbitraryDeg^{\mDeg} z\|_{\partial T}^2
    \\
    &\quad\begin{aligned}[t]
    & = \|\projLTraceLocArbitraryDeg^{\mDeg} z_T^H - \recLoc^{\mDeg} \projLTraceLocArbitraryDeg^{\mDeg} z\|_{\partial T}^2\\
    &\lesssim \|\projLTraceLocArbitraryDeg^{\mDeg} z_T^H - \projLLocM z_T^H\|_{\partial T}^2 + \|\projLLocM z_T^H -\recLoc^{\mDeg} \projLTraceLocArbitraryDeg^{\mDeg} z\|_{\partial T}^2\\
    \overset{\eqref{eq: stability of L2-projection on local traces}, \eqref{eq: multiplicative trace inequality}}
    &\lesssim \|z_T^H - \projLLocM z_T^H\|_{\partial T}^2 + \|\projLLocM z_T^H -\recLoc^{\mDeg} \projLTraceLocArbitraryDeg^{\mDeg} z\|_{T}\|\nabla (\projLLocM z_T^H - \recLoc^{\mDeg} \projLTraceLocArbitraryDeg^{\mDeg} z)\|_{T}
    \\
    &\quad + h_T^{-1}\|\projLLocM z_T^H -\recLoc^{\mDeg} \projLTraceLocArbitraryDeg^{\mDeg} z\|_{T}^2\\
    \overset{\eqref{eq: bound nabla cell dof in terms of energy norm}}
    &\lesssim \|z_T^H - \projLLocM z_T^H\|_{\partial T}^2 + \mDeg^{\frac{1}{2}}\|\projLLocM z_T^H -\recLoc^{\mDeg} \projLTraceLocArbitraryDeg^{\mDeg} z\|_{T}\aNormNonCondLoc{(\projLLocM z_T^H - \recLoc^{\mDeg} \projLTraceLocArbitraryDeg^{\mDeg} z, 0)}
    \\
    &\quad + h_T^{-1}\|\projLLocM z_T^H -\recLoc^{\mDeg} \projLTraceLocArbitraryDeg^{\mDeg} z\|_{T}^2.
    \end{aligned}
  \end{aligned}
  \]
    Using, \eqref{eq: approximation L2-projection on trace}, \eqref{eq: U_T^k L2 error estimate}, and \eqref{eq: U_T^k energy error estimate} to estimate the various norms appearing in the right-hand side of the above expression, we get
  \[
  \begin{aligned}
    \|\projLTraceLocArbitraryDeg^{\mDeg} z - \recLoc^{\mDeg} \projLTraceLocArbitraryDeg^{\mDeg} z\|_{\partial T}^2
    &\lesssim \errFacMLoc^{3}\|z_T^H\|_{2, T}^2 + \mDeg^{\frac{1}{2}}\errFacMLoc^3\|z_T^H\|_{2, T}^2 + \mDeg^{-1}\errFacMLoc^3\|z_T^H\|_{2, T}^2\\
    \overset{\eqref{eq: z_T^H bound}}
    &\lesssim \mDeg^{\frac{1}{2}}\errFacMLoc^3\left(\|z\|_{2, T}^2 + \|\recLoc^{\mDeg}\errLocTrace\|_T^2\right).
  \end{aligned}
  \]
    Substituting into \eqref{eq: multigrid plus expression 1},
    using the fact that $h_T \le h$ for all $T \in \mesh$,
    and invoking elliptic regularity to write
    \begin{equation}\label{eq:z.2.Omega.est}
      \|z\|_{2,\Omega}\overset{\eqref{eq: elliptic regularity}}\lesssim \|\recGlob^{\mDeg}\errGlobTrace\|_\Omega,
    \end{equation}
    we get
  \begin{equation}
    \label{eq: multigrid plus expression 2}
    \aGlobArgM{\errGlobTrace}{\projLTraceGlobArbitraryDeg^{\mDeg} z}
    \lesssim
    \mDeg^{\frac{1}{4}}\errFacM^{\frac{3}{2}} \|A_h^{\mDeg}\trialGlobTrace\|_\facesUnion
      \|\recGlob^{\mDeg}\errGlobTrace\|_\Omega.
  \end{equation}
  Subtracting \eqref{eq: multigrid minus expression} and adding \eqref{eq: multigrid plus expression 2} to \eqref{eq: multigrid error expression}, we obtain
  \[
  \begin{aligned}
    \|\recGlob^{\mDeg}\errGlobTrace\|_\Omega^2
    &\lesssim \sumT\left[
      \big(\errLocTrace-\recLoc^{\mDeg} \errLocTrace, (\nabla z - \projLLocVecM\nabla z) \cdot \normalT\big)_{\partial T} + \sNonCondLocArgM{(\recLoc^{\mDeg} \errLocTrace, \errLocTrace)}{\interpolLocM{z}}
      \right]
    \\
    &\quad
    + \mDeg^{\frac{1}{4}}\errFacM^{\frac{3}{2}} \|A_h^{\mDeg}\trialGlobTrace\|_\facesUnion  \|\recGlob^{\mDeg}\errGlobTrace\|_\Omega
    \\
    &\lesssim \left(
    \sumT\|\errLocTrace-\recLoc^{\mDeg} \errLocTrace\|_{\partial T}^2
    \right)^{\frac{1}{2}}\left(
    \sumT\|\nabla z - \projLLocVecM\nabla z\|_{\partial T}^2
    \right)^{\frac{1}{2}}
    \\
    &\quad
    + \left(
    \sumT \sNormNonCondLocM{(\recLoc^{\mDeg} \errLocTrace, \errLocTrace)}
    \right)^{\frac{1}{2}}\left(
    \sumT \sNormNonCondLocM{\interpolLocM{z}}
    \right)^{\frac{1}{2}}
    \\
    &\quad + \mDeg^{\frac{1}{4}}\errFacM^{\frac{3}{2}} \|A_h^{\mDeg}\trialGlobTrace\|_\facesUnion
    \|\recGlob^{\mDeg}\errGlobTrace\|_\Omega
    \\
    \overset{\eqref{eq: stabilization: trace - cell dof bound}, \eqref{eq: approximation L2-projection on trace}, \eqref{eq: stabilization consistency}}
    &\lesssim
    \frac{h}{\mDeg}\aNormNonCondGlobM{(\recGlob^{\mDeg} \errGlobTrace, \errGlobTrace)}\|z\|_{2,\Omega}
    + \mDeg^{\frac{1}{4}}\errFacM^{\frac{3}{2}} \|A_h^{\mDeg}\trialGlobTrace\|_\facesUnion
    \|\recGlob^{\mDeg}\errGlobTrace\|_\Omega
    \\
    \overset{\eqref{eq:aNormGlob}, \eqref{eq: energy error aux rhs}, \eqref{eq:z.2.Omega.est}}
    &\lesssim
    \errFacM^{\frac{3}{2}}\|A_h^{\mDeg}\trialGlobTrace\|_\facesUnion \|\recGlob^{\mDeg}\errGlobTrace\|_\Omega
    + \mDeg^{\frac{1}{4}}\errFacM^{\frac{3}{2}} \|A_h^{\mDeg}\trialGlobTrace\|_\facesUnion
    \|\recGlob^{\mDeg}\errGlobTrace\|_\Omega,
  \end{aligned}
  \]
  where in the second step we used Cauchy--Schwarz inequalities.
  Simplifying, we obtain $\|\recGlob^{\mDeg}\errGlobTrace\|_\Omega \lesssim \mDeg^{\frac{1}{4}}\errFacM^{\frac{3}{2}} \|A_h^{\mDeg}\trialGlobTrace\|_\facesUnion$. Substituting into \eqref{eq: multigrid l2 error trace expression} and again applying energy error estimate \eqref{eq: energy error aux rhs} finally yields
  \[
  \|\errGlobTrace\|^2_{\facesUnion}
  \lesssim
  \mDeg^{\frac{3}{4}}\errFacM^{2}\|A_h^{\mDeg}\trialGlobTrace\|_\facesUnion^2.
  \]
  \medskip\\
  \emph{Proof of \eqref{eq: claimed bound 2}.}
  Copying the derivation for the bound on $\|\trialGlobTrace - \tildeTrialGlobTrace\|_\facesUnion$, we have
  \[
  \|\projMGN\trialGlobTrace - \tildePNTrialGlobTrace\|_\facesUnion \lesssim \frac{h}{\nDeg^\frac{5}{8}}\|A_h^{\nDeg}\projMGN\trialGlobTrace\|_\facesUnion.
  \]
  We conclude after using $\|A_h^{\nDeg}\projMGN\trialGlobTrace\|_\facesUnion\overset{\eqref{eq: A^k P^k is proj of A^n}} \le \|A_h^{\mDeg}\trialGlobTrace\|_\facesUnion$ and $\mDeg \overset{\eqref{eq: jump degrees}}= \sigma_j \nDeg$.
\end{proof}


\section*{Acknowledgements}

The authors acknowledge the support of the French \emph{Agence Nationale de la Recherche} under grant ANR-23-CE46-0013 HIPOTHEC.


\printbibliography

\end{document}